\newcommand*{\isoarrow}[1]{\arrow[#1,"\rotatebox{90}{\(\sim\)}"
]}
\theoremstyle{definition}
\newtheorem{theorem}{Theorem}[section]
\newtheorem{proposition}[theorem]{Proposition}
\newtheorem{lemma}[theorem]{Lemma}
\newtheorem{corollary}[theorem]{Corollary}
\newtheorem{formula}[theorem]{Formula}
\newtheorem{definition}[theorem]{Definition}
\newtheorem{example}[theorem]{Example}
\newtheorem{remark}[theorem]{Remark}
\newtheorem{convention}[theorem]{Convention}
\DeclareMathOperator*{\supp}{supp}
\DeclareMathOperator{\Hom}{{\text{Hom}}}
\DeclareMathOperator{\IC}{{\textbf{IC}}}
\DeclareMathOperator{\Ext}{{\text{Ext}}}
\DeclareMathOperator{\End}{{\text{End}}}
\DeclareMathOperator{\RHom}{{\text{RHom}}}
\DeclareMathOperator{\Diff}{{\text{Diff}}}
\DeclareMathOperator{\RSHom}{{\mathcal{RH}\text{om}}}
\DeclareMathOperator{\ESxt}{{\mathcal{E}\text{xt}}}
\DeclareMathOperator{\REnd}{{\text{REnd}}}
\DeclareMathOperator{\Ker}{{\text{Ker}}}
\DeclareMathOperator{\Spec}{{\text{Spec}}}
\DeclareMathOperator{\ddeg}{{\text{deg}}}
\DeclareMathOperator{\heart}{\ensuremath\heartsuit}
\DeclareMathOperator{\Id}{Id} 
\newcommand{\Modu}{{\text{-mod}}}
\title{\textbf{On the derived ring of differential operators on a singularity}}
\author{Haiping Yang\footnote{Department of Mathematics, Imperial College London, Exhibition Rd, South Kensington, London SW7 2BX, \href{mailto:hy2313@ic.ac.uk}{hy2313@ic.ac.uk}}}
\date{}
\begin{document}
\maketitle

\begin{abstract}
We show for an affine variety $X$, the derived category of quasi-coherent $D$-modules is equivalent to the category of DG modules over an explicit DG algebra, whose zeroth cohomology is the ring of Grothendieck differential operators $\Diff(X)$. When the variety is cuspidal, we show that this is just the usual ring $\Diff(X)$, and the equivalence is the abelian equivalence constructed by Ben-Zvi and Nevins. We compute the cohomology algebra and its natural modules in the hypersurface, curve and isolated quotient singularity cases. We identify cases where a $D$-module is realised as an ordinary module (in degree 0) over Diff($X$) and where it is not.
\end{abstract}

\section{Introduction}

Suppose $X$ is a smooth complex affine variety. Then $D$-modules on $X$ are defined to be modules over the ring of Grothendieck differential operators $\Diff(X)$ which behaves nicely. It is well known that $\Diff(X)$ is Noetherian in this case. If $X$ is not affine but still smooth, we can sheafify this construction to obtain a sheaf $\mathcal{D}_X$, and define the $D$-modules as the sheaves of modules over this sheaf of rings. 

For smooth varieties, this is enough. However, for singular varieties many problems can occur. The fundamental issue is that the ring of differential operators can be very complicated and sometimes not even Noetherian; a non-Noetherian example is the cubic cone $x_1^3 + x_2^3 + x_3^3 = 0$, see \cite{NotNeo}. Even when it is Noetherian, in general this construction does not have desirable geometric properties. For instance, Kashiwara's theorem, that for a closed embedding $Y\hookrightarrow X$, $D$-modules on $X$ set-theoretically supported on $Y$ are equivalent to $D$-modules on $Y$, fails in general. 

A typical solution is to \textit{define} the category so that this statement holds. But then, this is no longer the category of modules over any ring. This leads to two definitions of $D$-modules. Another one, called \textit{crystals} were first introduced by Grothendieck in \cite{gr} where he defined \emph{crystalline topos} and related to de Rham theory; Beilinson and Drinfeld in \cite{BD} related crystals to $D$-modules and noted that there is no distinction between smooth and non-smooth settings, and in \cite{D-modules_and_crystals} Gaitsgory and Rozenblyum gave a modern treatment. Crystals are defined as sheaves on $X$ which are equipped with compatible extensions (analogously to parallel transport) on infinitesimal thickenings. These three definitions coincide for smooth varieties. More generally, it is shown in \cite{DIFFERENTIAL_OPERATORS_ON_AN_AFFINE_CURVE} and generalised in \cite{Cusps_and_D-modules}, when the variety is Cohen--Macaulay and a cuspidal curve (or more generally there is a \textit{good cuspidal quotient morphism} from a smooth variety to it), these three definitions coincide.

One approach to singular varieties is via `\textit{derived algebraic geometry}' - roughly, this replaces ordinary rings by DG rings, and categories by DG (or triangulated) categories. In this context, it turns out that all three definitions, suitably interpreted, coincide, and give a DG category of $D$-modules, which we call $D\Modu_{X}^{dg}$.

In this work, we consider the relationship between this DG category and the original viewpoint of rings of Grothendieck differential operators. We prove that the derived category of $D$-modules is equivalent to the category of DG modules over an explicit DG algebra $\Diff(X)^{dg}$, which `corrects' Grothendieck's ring. Namely, it is concentrated in non-negative degrees, with zeroth cohomology equal to $\Diff(X)$, and with cohomology bounded by the dimension of the variety (so it is a nilpotent extension of the original ring).\\


The starting point of this work is the observation that there is a canonical $D$-module $D_X$ on (singular) varieties which is a compact generator of the relevant derived category of $D$-modules on $X$. The compact generation of $D_X$ was already proved in \cite{D-modules_and_crystals} in the framework of crystals, but we give an elementary proof in terms of modules over rings of differential operators.

We make use of this compact generator $D_X$, to show that: \begin{theorem}\label{introtheorem1}
There is an derived equivalence between the derived category of $D$-modules on $X$ and DG modules over $\Diff(X)^{dg}$, $$D\Modu_X^{dg}\cong \Diff(X)^{dg}\Modu.$$
\end{theorem}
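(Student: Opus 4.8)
The plan is to use the canonical $D$-module $D_X$ as a compact generator and invoke a Morita-style recognition theorem for triangulated/DG categories. The standard mechanism here is the theorem (going back to Keller, and in this setting attributed to the framework in the cited references) that if $\mathcal{C}$ is a compactly generated DG (or stable $\infty$-) category with a compact generator $G$, then $\mathcal{C}$ is equivalent to the category of DG modules over the derived endomorphism algebra $\REnd(G)$, with the equivalence given by $M \mapsto \RHom(G, M)$. So the entire proof reduces to two inputs: first, that $D_X$ is a compact generator of $D\Modu_X^{dg}$, and second, that the derived endomorphism DG algebra $\REnd(D_X)$ is precisely the DG algebra $\Diff(X)^{dg}$ named in the statement.

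First I would cite the compact generation of $D_X$, which the excerpt tells us is available both from \cite{D-modules_and_crystals} and from the author's own elementary argument in terms of modules over rings of differential operators. Compact generation means two things that I would verify in turn: that $D_X$ is a compact object (its image under $\RHom(D_X, -)$ commutes with direct sums, equivalently it is perfect/dualizable in the relevant sense), and that it generates, i.e. the only object $M$ with $\RHom(D_X, M) \cong 0$ is $M \cong 0$. Granting these, Keller's theorem (or Schwede--Shipley / Lurie in the $\infty$-categorical formulation) immediately yields an equivalence $$D\Modu_X^{dg} \cong \REnd(D_X)\Modu.$$

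Second, I would identify the DG algebra. The definition of $\Diff(X)^{dg}$ in this paper should be exactly $\REnd_{D\Modu_X^{dg}}(D_X)$, or a small explicit model of it, so this step is a matter of matching definitions and checking the stated structural properties: that it is concentrated in non-negative cohomological degrees, that its zeroth cohomology is $\Diff(X)$, and that its cohomology vanishes above the dimension of $X$. The degree-zero cohomology computation $H^0 \REnd(D_X) \cong \Diff(X)$ should follow from identifying ordinary endomorphisms of $D_X$ as a $D$-module with Grothendieck differential operators, while the higher $\Ext$ groups $\Ext^i(D_X, D_X)$ give the positive-degree cohomology and their vanishing beyond $\dim X$ should come from a cohomological dimension bound for the relevant category.

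The main obstacle I expect is the second input rather than the first: pinning down that the abstract derived endomorphism algebra $\REnd(D_X)$ agrees with the concretely defined $\Diff(X)^{dg}$ and establishing its structural properties, especially $H^0 \cong \Diff(X)$ and the boundedness of cohomology. The compact generation, while essential, is quoted from prior work; the real content is the explicit identification and computation of the endomorphism DG algebra, which requires a concrete cofibrant or semifree model of $D_X$ (for instance via a Koszul-type or bar resolution reflecting the singularity) so that $\REnd$ can be computed as honest DG endomorphisms. I would therefore devote most of the effort to constructing such a model and reading off its cohomology, with the equivalence itself following formally once $D_X$ is known to be a compact generator.
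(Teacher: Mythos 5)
Your proposal is correct and follows essentially the same route as the paper: the paper proves $D_X$ is a compact generator (elementary argument: generation via nonzero maps to supported modules, compactness via a finite projective resolution of $\mathcal{O}_X$ over $\mathcal{O}_V$ tensored with $D_V$), verifies the Grothendieck-category hypothesis, invokes Keller's theorem exactly as you do, and then \emph{defines} $\Diff(X)^{dg}:=\REnd(D_X)$, so the identification step you flag as the main obstacle is definitional rather than computational (the structural facts $H^0\cong\Diff(X)$ and boundedness are established separately and are not needed for the equivalence itself). The only technical ingredient you elide is the Gaitsgory--Rozenblyum equivalence $D^b(\mathcal{A})\cong D^b_{\mathcal{A}}(D_V)$, which the paper uses to reconcile the two natural candidate derived categories before applying Keller.
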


In particular, in the case $X$ is smooth or more generally cuspidal, we show $\Diff(X)^{dg}\cong \Diff(X)$ is concentrated in degree zero. And we recover the fact that, in cuspidal/smooth cases, all definitions of $D$-modules coincide even in the underived setting. When $X$ need not be cuspidal, the idea to study $D$-modules as DG modules over a DG algebra is new, to our knowledge. \\

The structure of the paper is organised as follows: 

In Section \ref{$D$-modules as modules over a DG algebra}, we prove Theorem \ref{introtheorem1}. We write down an explicit compact generator in the affine case. We also explain how our construction agrees with \cite{Cusps_and_D-modules} in the cuspidal case.

To do this, we recall the canonical $D$-module $D_X$. We give an elementary proof that it compactly generates the DG category of $D$-modules on $V$ supported on $X$, for any closed embedding of $X$ into a smooth affine variety $V$ (one of the ways to define $D\Modu_X^{dg}$).

We recall the seminormalisation $X^{sn}$ and explain that: \begin{theorem}
There is a derived equivalence between the categories
$$D\Modu_X^{dg}\cong D\Modu_{X^{sn}}^{dg}.$$ 
\end{theorem}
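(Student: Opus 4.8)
The plan is to reduce the statement to a comparison of (derived) rings of differential operators via Theorem \ref{introtheorem1}. Since the seminormalisation $\pi\colon X^{sn}\to X$ is finite, $X^{sn}$ is again affine, so Theorem \ref{introtheorem1} applies to both sides, giving $D\Modu_X^{dg}\cong \Diff(X)^{dg}\Modu$ and $D\Modu_{X^{sn}}^{dg}\cong \Diff(X^{sn})^{dg}\Modu$, where $\Diff(-)^{dg}$ is the DG endomorphism algebra $\REnd(D_{(-)})$ of the canonical compact generator. It therefore suffices to produce a functor between the two DG categories carrying the compact generator $D_{X^{sn}}$ to an object quasi-isomorphic to $D_X$ and inducing a quasi-isomorphism on these endomorphism DG algebras; by compact generation together with Theorem \ref{introtheorem1}, such a functor is automatically an equivalence, i.e. a derived Morita equivalence between $\Diff(X)^{dg}$ and $\Diff(X^{sn})^{dg}$.

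The geometric input I would use is that $\pi$ is a universal homeomorphism: it is finite, surjective, and a bijection on points with trivial residue-field extensions (in characteristic zero the seminormalisation coincides with the weak normalisation), and it is an isomorphism over the complement of the non-seminormal locus, a proper closed subset cut out by the conductor. In particular $X$ and $X^{sn}$ share a common normalisation $\tilde X$, fitting into a factorisation $\tilde X\to X^{sn}\to X$. The comparison functor is then the pushforward of $D$-modules (equivalently of crystals) along $\pi$, with right adjoint the exceptional pullback $\pi^{!}$, and the content of the theorem is that these are mutually inverse.

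The main obstacle is exactly to show that $\pi$, which is neither flat nor an isomorphism, nonetheless induces an equivalence on $D$-modules. This is a form of topological invariance of the (derived) category of crystals under universal homeomorphisms: although the naive de Rham prestacks $X_{dR}$ and $X^{sn}_{dR}$ differ as functors of points, since the identity map of $X$ does not lift through $\pi$, the infinitesimal thickenings of $X$ and of $X^{sn}$ are matched bijectively by $\pi$, so the relevant de Rham groupoids, and hence the categories of $D$-modules supported on $X$ inside a smooth ambient $V$, agree. Concretely I would verify the two reductions from the first paragraph: that $\pi_{*}D_{X^{sn}}\cong D_X$, which away from the conductor is immediate because $\pi$ is an isomorphism there, and that the induced map $\REnd(D_{X^{sn}})\to \REnd(D_X)$ is a quasi-isomorphism. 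The crux is the behaviour along the conductor, where one must show that the discrepancy between $X$ and $X^{sn}$ is annihilated after passing to the crystalline picture; the key characteristic-zero fact is that the relative de Rham complex of the subintegral morphism $\pi$ is acyclic, so $\pi$ is a de Rham equivalence. I expect this conductor analysis, rather than the formal reduction of the first paragraph, to be where the real work lies.
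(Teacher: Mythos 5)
Your reduction to a Morita-type statement is fine in principle, but the generator matching on which you build everything is false, and in fact contradicts the very theorem you are trying to prove. Take $X$ to be the cuspidal cubic, so $X^{sn}=\mathbb{A}^1$ and $\pi\colon X^{sn}\to X$ is the normalisation. By the paper's cuspidal analysis (the proposition following Theorem \ref{eti}), both DG algebras are concentrated in degree zero: $\REnd(D_{X^{sn}})\cong\Diff(\mathbb{A}^1)=A_1$ and $\REnd(D_X)\cong\Diff(X)$. A quasi-isomorphism between them, as you require, would therefore be a ring isomorphism $A_1\cong\Diff(X)$; but $\Diff(X)$ for the cusp is Morita equivalent to, yet \emph{not} isomorphic to, the Weyl algebra (Smith--Stafford \cite{DIFFERENTIAL_OPERATORS_ON_AN_AFFINE_CURVE}; see also Letzter--Makar-Limanov). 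Likewise your claim $\pi_*D_{X^{sn}}\cong D_X$ fails: the paper computes (beginning of its cuspidal section) that the equivalence matches $D_X$ with $\pi^!D_X=\Diff_X(\mathcal{O}_{X^{sn}},\mathcal{O}_X)=D_{X\leftarrow X^{sn}}$, a projective but in general non-free right $\Diff(X^{sn})$-module, so $D_{X^{sn}}$ and $D_X$ do \emph{not} correspond to each other; if they did, one would again get $\Diff(X)\cong\End(D_{X^{sn}})\cong A_1$. (The natural map $D_X\to\pi_*D_{X^{sn}}$ is also visibly not an isomorphism: its cokernel is $(\pi_*\mathcal{O}_{X^{sn}}/\mathcal{O}_X)\otimes_{\mathcal{O}_V}D_V\neq 0$.) The content of the theorem is precisely that one gets a nontrivial \emph{derived Morita} equivalence, one which does not match these two canonical generators, so no argument that insists on matching them can succeed.

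Even the repairable version of your plan (show $\pi_*D_{X^{sn}}$ is \emph{some} compact generator of $D\Modu_X^{dg}$ and that $\pi_*$ induces a quasi-isomorphism $\REnd(D_{X^{sn}})\to\REnd(\pi_*D_{X^{sn}})$) is not carried out: you defer the crux to a claimed characteristic-zero fact about acyclicity of the relative de Rham complex of a subintegral morphism, with no argument connecting that statement to full faithfulness of $\pi_*$ on $D$-modules -- and for a non-smooth morphism the naive relative de Rham complex does not compute crystalline pushforwards, so this is not a citable shortcut. The paper's proof (Proposition \ref{prop}) avoids generators and endomorphism rings entirely: a universal homeomorphism $\pi$ is proper, so base change for the Cartesian square on $X^{sn}\times_X X^{sn}$ gives $p_{1*}p_2^!\cong\pi^!\pi_*$; since $(X^{sn}\times_X X^{sn})^{\mathrm{red}}=\Delta(X^{sn})^{\mathrm{red}}$, Kashiwara's theorem identifies $p_{1*}p_2^!\cong\Id$, whence $\pi^!\pi_*\cong\Id$; and a generic-point support argument shows $\pi^!$ is conservative, giving $\pi_*\pi^!\cong\Id$. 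The theorem then follows because the seminormalisation is, by definition, a universal homeomorphism. Your intuition about matched infinitesimal thickenings is pointing at the right phenomenon, but the precise way the universal homeomorphism hypothesis enters is through the equality of the reduced fibre product with the diagonal, not through any de Rham complex computation.
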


Note that $X$ is cuspidal if and only if $X^{sn}$ is smooth. Moreover, if $X$ is Cohen--Macaulay, the equivalence is the derived version of the one explained in \cite{Cusps_and_D-modules} (and our argument in general is essentially the same as theirs).

In Section \ref{Calculation_of_cohomology_in_the_hypersurface_case}, in the case of a hypersurface, from Theorem \ref{introtheorem1} we derive the interesting identity: 
\begin{corollary}
If $X=\{f=0\}$ is a smooth or cuspidal hypersurface, then $\frac{D_{\mathbb{A}^n}}{D_{\mathbb{A}^n}\cdot f+f \cdot D_{\mathbb{A}^n}}=0$.
\end{corollary}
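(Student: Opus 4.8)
The plan is to deduce the vanishing from Theorem \ref{introtheorem1} together with the degree-zero concentration of $\Diff(X)^{dg}$ in the smooth and cuspidal cases. Under the equivalence of Theorem \ref{introtheorem1}, the DG algebra $\Diff(X)^{dg}$ is identified with the derived endomorphism algebra $\REnd_{D_{\mathbb{A}^n}}(D_X)$ of the explicit compact generator $D_X$ of $D$-modules supported on $X=\{f=0\}\hookrightarrow \mathbb{A}^n$, and in the smooth or cuspidal case this algebra is quasi-isomorphic to $\Diff(X)$, hence concentrated in cohomological degree $0$ with vanishing higher cohomology. The strategy is therefore to compute $\REnd_{D_{\mathbb{A}^n}}(D_X)$ by hand in the hypersurface case, to exhibit the quotient $D_{\mathbb{A}^n}/(D_{\mathbb{A}^n}\cdot f+f\cdot D_{\mathbb{A}^n})$ as its top cohomology group, and then to invoke the degree-zero concentration to force that group to vanish.

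First I would use the explicit generator furnished in Section \ref{$D$-modules as modules over a DG algebra}: for a hypersurface the generator $D_X$ is the transfer module $D_{\mathbb{A}^n}/D_{\mathbb{A}^n}\cdot f$, which (after localising away from $X$, where $f$ becomes a unit) is supported on $X$. Since $f$ is a non-zero-divisor in the Weyl algebra, this module is resolved by the perfect complex of free left $D_{\mathbb{A}^n}$-modules
$$ C=\bigl[\, D_{\mathbb{A}^n}\xrightarrow{\;\cdot f\;} D_{\mathbb{A}^n}\,\bigr], $$
placed in degrees $-1$ and $0$, with differential given by right multiplication by $f$. Because $C$ is a bounded complex of projective (indeed free) modules, its derived endomorphism algebra is computed directly by the internal Hom-complex $\Hom^{\bullet}_{D_{\mathbb{A}^n}}(C,C)$, with no further resolution needed, using the identification $\Hom_{D_{\mathbb{A}^n}}(D_{\mathbb{A}^n},M)\cong M$ by evaluation at $1$ (so that a left-linear self-map of $D_{\mathbb{A}^n}$ is right multiplication by an element).

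Next I would write out this Hom-complex, which has terms in degrees $-1,0,1$. The top term is $\mathcal{H}^{1}=\Hom_{D_{\mathbb{A}^n}}(C^{-1},C^{0})\cong D_{\mathbb{A}^n}$, and the incoming differential from $\mathcal{H}^{0}=\Hom_{D_{\mathbb{A}^n}}(C^{-1},C^{-1})\oplus\Hom_{D_{\mathbb{A}^n}}(C^{0},C^{0})\cong D_{\mathbb{A}^n}\oplus D_{\mathbb{A}^n}$ sends a pair $(b,c)$ of right multiplications to $bf-fc$. The image of this differential is therefore $D_{\mathbb{A}^n}\cdot f+f\cdot D_{\mathbb{A}^n}$, so its cokernel is exactly $D_{\mathbb{A}^n}/(D_{\mathbb{A}^n}\cdot f+f\cdot D_{\mathbb{A}^n})$, giving $H^{1}(\REnd_{D_{\mathbb{A}^n}}(D_X))\cong D_{\mathbb{A}^n}/(D_{\mathbb{A}^n}\cdot f+f\cdot D_{\mathbb{A}^n})$. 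Finally, since $X$ is smooth or cuspidal, $\Diff(X)^{dg}=\REnd_{D_{\mathbb{A}^n}}(D_X)$ is concentrated in degree $0$, so this $H^{1}$ vanishes, which is precisely the claimed identity.

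The main obstacle I expect is bookkeeping rather than conceptual. The real content is confirming that the explicit generator of Section \ref{$D$-modules as modules over a DG algebra} is indeed this two-term complex $[D_{\mathbb{A}^n}\xrightarrow{\cdot f}D_{\mathbb{A}^n}]$ in the hypersurface case (including that it is genuinely supported on $X$ and compact), and then getting the module structures and the sign in the differential $bf-fc$ correct so that the top cohomology is identified with the asserted two-sided quotient, together with the observation that $\REnd$ in the supported subcategory agrees with the naive $D_{\mathbb{A}^n}$-linear Hom-complex. Once the complex is in hand and the degree-zero concentration of $\Diff(X)^{dg}$ in the cuspidal case is available, the remaining computation is purely mechanical.
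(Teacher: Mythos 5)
Your proposal is correct and follows essentially the same route as the paper: resolve $D_X$ by the two-term free complex $[D_{\mathbb{A}^n}\xrightarrow{f} D_{\mathbb{A}^n}]$, identify $\Ext^1(D_X,D_X)$ with the two-sided quotient $D_{\mathbb{A}^n}/(D_{\mathbb{A}^n}\cdot f+f\cdot D_{\mathbb{A}^n})$, and then invoke the Section \ref{$D$-modules as modules over a DG algebra} result that $\REnd(D_X)\cong\Diff(X)$ is concentrated in degree zero in the smooth/cuspidal case. The only cosmetic differences are that you compute the endomorphism complex $\Hom^\bullet(C,C)$ directly (with left modules) while the paper computes $\RHom(D_X,M)=[M\xrightarrow{\cdot f}M]$ for general supported $M$ (with right modules) and then sets $M=D_X$, and that the compatibility of $\RHom$ across $D^b(\mathcal{A})\subset D^b_{\mathcal{A}}(D_V)\subset D^b(D_V)$, which you flag as bookkeeping, is exactly what the paper's Theorem \ref{Gaits} supplies.
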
 

This at first glance can be surprising. The formula computes $\Ext^1(D_X,D_X)$, and it is vanishing if and only if $\Diff(X)=\Diff(X)^{dg}$, that is, if $D$-modules on $X$ are the same as $\Diff(X)$-modules. For general $f$, we compute $H^\bullet(\Diff(X)^{dg})=\Ext^\bullet(D_X,D_X)$ and its action on $D$-modules (more precisely, on $\Ext^\bullet(D_X,M))$. \\

In Section \ref{curve}, we give some examples of our formulas and theorems in the case of regular holonomic $D$-modules when the variety is a curve $C$. By calculating Ext$^1(D_C,M)$, we show:
\begin{theorem}
The abelian subcategory of regular holonomic $D$-modules with `\textit{completely non-trivial monodromy}' around each non-cuspidal singularity over a curve maps to ordinary modules over $\Diff(C)$, \textit{i.e.}, they have no higher cohomology.
\end{theorem}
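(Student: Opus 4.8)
The plan is to run everything through the derived equivalence of Theorem~\ref{introtheorem1}. Under it a regular holonomic $D$-module $M$ corresponds to the DG module $\RHom(D_C,M)$ over $\Diff(C)^{dg}$, and $M$ lands in the ordinary (degree-zero) modules over $\Diff(C)=H^0(\Diff(C)^{dg})$ exactly when this complex is concentrated in cohomological degree $0$, i.e.\ when $\Ext^i(D_C,M)=0$ for all $i>0$. So the entire statement is a higher-$\Ext$ vanishing. Since $\dim C = 1$, the same dimension bound that forces $H^{>1}(\Diff(C)^{dg})=0$ (noted in the introduction) gives $\Ext^i(D_C,M)=0$ for $i\ge 2$, so the only obstruction is $\Ext^1(D_C,M)$, and it suffices to show this vanishes.

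The second step is to localise this $\Ext^1$. On the smooth locus of $C$ the generator $D_C$ restricts to the free module $\mathcal{D}$, so $\Ext^{>0}$ of it against any module vanishes; hence the sheaf $\ESxt^1(D_C,M)$ is supported on the finite singular set of $C$. Using this support statement (or, equivalently, the seminormalisation equivalence $D\Modu_C^{dg}\cong D\Modu_{C^{sn}}^{dg}$, which replaces cuspidal points by smooth ones and keeps the non-cuspidal singularities) I would write $\Ext^1(D_C,M)$ as a direct sum of local contributions, one per singular point. At a cuspidal point the cuspidal case of Theorem~\ref{introtheorem1} gives $\Diff^{dg}=\Diff$ locally, so $D_C$ is locally a projective generator and there is no contribution. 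Thus only the non-cuspidal singular points can contribute, which is why the hypothesis is imposed precisely there.

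The heart of the argument is the local computation at a non-cuspidal singularity. Here I would use the explicit local presentation of $D_C$ (the $D_V$-resolution underlying the hypersurface computation of Section~\ref{Calculation_of_cohomology_in_the_hypersurface_case}, or the seminormal local model of several smooth branches meeting at the point) together with the Riemann--Hilbert description of $M$ as a local system on the branches with monodromy. The upshot should be that the local $\Ext^1$ is identified with a cokernel of the operator $T-1$, where $T$ is the monodromy of $M$ along the branches through the point; that is, with the monodromy coinvariants. The hypothesis of \emph{completely non-trivial monodromy} says exactly that $T-1$ is invertible on each branch, so every such cokernel vanishes. Assembling the local contributions gives $\Ext^1(D_C,M)=0$, and hence $M$ maps to an honest $\Diff(C)$-module.

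The main obstacle is this last step: pinning down, at a non-cuspidal point, the precise identification of the local $\Ext^1(D_C,M)$ with $\operatorname{coker}(T-1)$. This requires a sufficiently explicit grip on $D_C$ near such a singularity and a careful matching of the boundary map in its resolution against the monodromy operator under Riemann--Hilbert, so that "completely non-trivial monodromy'' is seen to be the exact condition making that map surjective. The reduction steps (using the equivalence, the dimension bound, and the localisation away from cuspidal points) are comparatively formal once Theorem~\ref{introtheorem1} and the seminormalisation equivalence are in hand.
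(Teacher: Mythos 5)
Your first two reductions (only $\Ext^1$ can obstruct, and $\ESxt^1(D_C,M)$ is a direct sum of skyscrapers at the non-cuspidal singular points) agree with the paper. But the heart of your argument --- the identification of the local $\Ext^1$ with $\operatorname{coker}(T-1)$ --- is precisely what you leave unproved, and it is not what the paper does; moreover, as stated it is false. In the paper's own computation (Remark \ref{cal}), for $W$ equal to $n$ lines through the origin and $M=\IC(\Omega_{\mathbb{G}_m})$ (trivial monodromy, so the coinvariants are one-dimensional), one gets $\Ext^1(D_W,M)\cong\langle 1,x,\dots,x^{n-2}\rangle\otimes\mathbb{C}[\partial_y]$, an infinite-dimensional space: the correct local statement is a cokernel of right multiplication by the defining equation (Formula \ref{formula}), whose \emph{vanishing} is governed by the monodromy, but which is not the space of monodromy coinvariants. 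Worse, your proposed route to that local computation is blocked at a general non-cuspidal singularity: the derived equivalences you invoke (seminormalisation, and passage to a planar multicross model via Proposition \ref{prop}) do not carry $D_C$ to the canonical $D$-module of the model --- the paper flags exactly this in the remark following Theorem \ref{theo}, where it notes $f^!D_X\neq D_W$ --- so computing $\Ext^1(D_W,f^!M)$ on the model does not compute $\Ext^1(D_C,M)$. The hypersurface presentation you cite is only available when the singularity is already planar. Finally, your sketch never treats composition factors supported at points, which the hypothesis of completely non-trivial monodromy explicitly allows.

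The paper's actual proof avoids any computation at the singular point. Its key step is an adjunction: for $j\colon U\hookrightarrow C$ the inclusion of the smooth locus, $j^!D_C=D_U$ with $U$ smooth, so $\RHom(D_C,j_*N)=\RHom(D_U,N)=\Gamma_U(N)$ is concentrated in degree $0$ for \emph{every} $D$-module $N$ on $U$. The monodromy hypothesis is then translated, through the normalisation/seminormalisation equivalences (which preserve indecomposability and hence the property of being a clean extension), into the statement that every composition factor of $M$ not supported at a point is clean, \emph{i.e.}\ equals $H^0j_*$ of its restriction to $U$; such factors therefore have vanishing higher Ext by the adjunction, factors supported at points are handled by Proposition \ref{vanishingprop}, and one concludes along a composition series. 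If you want to salvage your approach, you would need to restrict the explicit computation to planar multicross points (where Formula \ref{formula} applies and the computation of Remark \ref{cal} goes through) and then find a substitute for the unavailable transfer of $D_C$ under the equivalences at the remaining non-cuspidal points --- which is, in effect, what the paper's adjunction-plus-cleanness argument accomplishes.
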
 The converse also holds for simple $D$-modules on seminormal curves. Here, \textit{`completely nontrivial monodromy'} means that, in the normalisation of $C$, all eigenvalues of monodromies about exceptional points are not equal to 1 (see below).\\

Finally in Section \ref{quotient}, we study the case of holonomic $D$-modules on isolated quotient singularities. In this case the results actually have the opposite flavour: 
\begin{theorem}
For an isolated finite quotient singularity $X$, local systems correspond to ordinary $\Diff(X)$-modules if there is trivial monodromy about singularities. The converse holds for simple $D$-modules, or more generally, intersection cohomology $D$-modules.\\
\end{theorem}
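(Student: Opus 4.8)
The plan is to deduce everything from the equivalence of Theorem \ref{introtheorem1}: under $D\Modu_X^{dg}\cong \Diff(X)^{dg}\Modu$ the functor realising the equivalence is $M\mapsto \RHom(D_X,M)=\Ext^\bullet(D_X,M)$, and a $D$-module $M$ is realised as an ordinary $\Diff(X)$-module, placed in degree $0$, precisely when $\Ext^i(D_X,M)=0$ for all $i\neq 0$. So the entire statement reduces to computing these Ext-groups for $M$ an extension of a local system, and the problem becomes local around the singular point. I fix notation $X=V/G$ with $V$ smooth and $G$ finite acting with single fixed point $o$, write $j\colon U=X\setminus\{o\}\hookrightarrow X$ for the (smooth) regular locus, and note $\pi_1(U)\cong G$, so that local systems on $U$ are representations $\rho\colon G\to GL(W)$ and ``trivial monodromy about the singularity'' means $\rho$ is trivial.

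For the direct direction I would treat trivial monodromy first. If $\rho$ is trivial then $L$ extends across $o$ to an integrable connection $\widetilde L$ on $X$, i.e.\ an $\mathcal{O}_X$-coherent $D$-module; indeed $\widetilde L\cong \mathcal{O}_X\otimes_{\mathbb{C}} W$ with its tautological flat structure. The claim is that any $\mathcal{O}_X$-coherent $D$-module is realised as an ordinary module, i.e.\ $\Ext^{>0}(D_X,\widetilde L)=0$. I would prove this by reducing to the base case $M=\mathcal{O}_X$: the functor $\RHom(D_X,-)$ commutes with tensoring by a flat bundle, so $\Ext^\bullet(D_X,\widetilde L)\cong \Ext^\bullet(D_X,\mathcal{O}_X)\otimes W$, and $\mathcal{O}_X$ is visibly an ordinary $\Diff(X)$-module, whence $\Ext^{>0}(D_X,\mathcal{O}_X)=0$. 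This gives the first assertion.

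For the converse I would compute $\Ext^\bullet(D_X,M)$ for $M=\IC(L)=j_{!*}L$ and show that a nonzero higher group is forced unless $\rho$ is trivial. Since $U$ is smooth, $\Ext^{>0}_U(D_U,M|_U)=0$, so the higher Ext is entirely a local contribution at $o$, computed by a costalk term $i^!$, equivalently by the cohomology of the punctured analytic neighbourhood of $o$ with coefficients in $L$. Using that the universal cover of $U$ near $o$ is $V\setminus\{o\}\simeq S^{2\dim X-1}$ and that $G$ is finite so there is no higher group cohomology in characteristic $0$, the link cohomology collapses to the $G$-invariants $W^G$ placed in degrees $0$ and $2\dim X-1$, the top class being $G$-invariant because complex-linear automorphisms preserve the orientation. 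The point is then to read off from the recollement triangle relating $j_!L$, $j_*L$ and $j_{!*}L$ which combination of these invariant classes survives into $\Ext^{>0}(D_X,\IC(L))$; I expect the surviving obstruction to be exactly the non-invariant part $W/W^G$, so that it vanishes if and only if $\rho$ is trivial. Simplicity of $M$, or more generally that $M$ is an IC extension, is used to guarantee that this obstruction class is genuinely detected and not cancelled by a spurious contribution from a submodule supported at $o$.

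The substantive difficulty, and the source of the ``opposite flavour'' compared with the curve case of Section \ref{curve}, is this last identification: pinning down precisely which invariant/coinvariant combination of the link cohomology lands in the higher Ext against the specific generator $D_X$, and checking that for the quotient singularity it is the non-invariant part $W/W^G$ (vanishing under trivial monodromy) rather than the invariant part $W^G$ (which governed the curve case). I would carry this out by making the equivariant description $D\Modu_X^{dg}\cong (D\Modu_V^{dg})^G$ explicit, identifying $D_X$ with the image of the canonical generator on $V$ and reducing the Ext-computation to a $G$-equivariant computation on the smooth variety $V$, where the localization triangle at $o$ and the resulting monodromy and determinant bookkeeping can be made completely explicit.
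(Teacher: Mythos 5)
Your reduction of the theorem to the vanishing of $\Ext^{\neq 0}(D_X,M)$ is correct, and your guess for the converse (that the obstruction is the non-invariant part $W/W^G$ of the link cohomology) matches the paper's answer in Theorem \ref{5.9}. However, both directions of your argument have genuine gaps.

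Forward direction: the step ``$\mathcal{O}_X$ is visibly an ordinary $\Diff(X)$-module, whence $\Ext^{>0}(D_X,\mathcal{O}_X)=0$'' is a non sequitur. Carrying a naive $\Diff(X)$-action has nothing to do with vanishing of $\Ext^{>0}(D_X,-)$: the module $D_X$ itself is an ordinary $\End(D_X)\cong\Diff(X)$-module, yet $\Ext^1(D_X,D_X)\neq 0$ for every Kleinian singularity and, explicitly, for the node. Worse, the same reasoning applied at a nodal curve would ``prove'' $\Ext^1(D_W,\IC(\Omega_{\mathbb{G}_m}))=0$, which is false by the computation in Section \ref{curve} (the basis $\langle 1,\dots,x^{n-2}\rangle\otimes\mathbb{C}[\partial_y]$ is nonzero for $n>1$). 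This is exactly the ``opposite flavour'' phenomenon the theorem is about, so no formal argument insensitive to the difference between curves and quotient singularities can succeed. Moreover, in Kashiwara's category there is no canonical object ``$\mathcal{O}_X$'' on singular $X$; the $D$-module you actually need is $\IC(X)\cong\pi_*(\Omega_V)^G$, and proving $\Ext^{>0}(D_X,\IC(X))=0$ is precisely the hard content of the paper: it requires the cone computation $\IC(X)\to j_*\Omega_U\to\delta[1-d]$ (Lemma \ref{5.2}), the identification $\pi^!\IC(X)\cong\Omega_V$ (Proposition \ref{5.4}), and the identity $D_X\cong(\pi_*D_V)^G$ (Lemma \ref{key}), which together feed into Theorem \ref{5.9}.

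Converse direction: you defer the key computation to an ``equivariant description $D\Modu_X^{dg}\cong(D\Modu_V^{dg})^G$'', but this equivalence is false. $G$-equivariant $D$-modules on $V$ are $D$-modules on the quotient \emph{stack}, not on the quotient \emph{variety}: for modules supported at the singular point the two categories are $\mathrm{Rep}(G)$ and $\mathrm{Vect}$ respectively, by Kashiwara's theorem. What the paper uses instead is the module identity $D_X\cong(\pi_*D_V)^G$ together with the adjunction $\pi_*\dashv\pi^!$, giving $\RHom(D_X,M)\cong\Gamma(V,\pi^!M)^G$ --- a reduction to the smooth variety $V$ by \emph{adjunction}, not by an equivalence of categories --- followed by the computation of $\pi^!\IC(N)$, which itself needs a non-obvious argument that two exact triangles on $V$ agree (via the identification $\pi_!\pi^!j_*\Omega_U=j_*(\bigoplus L)$ and an injectivity claim on Hom sets). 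Your sketch of ``monodromy and determinant bookkeeping'' presupposes exactly these missing steps, so as written the converse is not proved.
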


In \cite{Hotta1984TheIH} and later in \cite{Poisson_Traces_and_D-Modules_on_Poisson_Varieties} (see also \cite{Poisson_traces_D-modules_and_symplectic_resolutions}) a certain quotient $M(X)$ of our compact generator $D_X$ was considered which governs the invariants under Hamiltonian flows. It was used to define a new homology theory which fuses Poisson homology with the de Rham cohomology, which is particularly nice in the case of symplectic singularities. Other quotients of $D_X$ were studied in \cite{etingof2012coinvariants}, relating to other geometric structures on $X$. We hope that our study will have applications to these quotients and plan to address this elsewhere.\\

\textbf{Acknowledgement.} We are grateful to J. Bernstein for his original motivating question of whether $D_X$ is a compact generator, which triggered this work. We thank D. Gaitsgory for directing us to Theorem \ref{Gaits}. We also thank D. Ben-Zvi for explaining the various ways of defining the derived category of $D$-modules and N. Arbesfeld, A. Bode, D. Kaplan, Y. Lekili, T. Stafford and A. Yekutieli for useful discussions and suggestions. This paper is written with the full support from the author's PhD supervisor T. Schedler. 
The author was supported by an Industrial Strategy EPSRC scholarship at Imperial College London.\\

\begin{convention}
By a variety $X$, we always mean a reduced separated scheme of finite type over $\mathbb{C}$. We will always work with ({right}) quasi-coherent $D$-modules. 

We denote the straight $D_X$ for the canonical $D$-module (see below). When $X$ is smooth, we denote the curly $\mathcal{D}_X$ for the (sheaf) of rings of differential operators on $X$. When $X$ is affine, we reserve the notation Diff($X$) for the ring of Grothendieck differential operators on $X$.

We use the term local system to mean an $\mathcal{O}$-coherent (right) $D$-module (equivalently, a vector bundle with a flat connection) on a locally closed smooth subvariety. We use the term topological local system to mean a representation of the fundamental group of such a subvariety. By the Riemann-Hilbert correspondence, the latter is equivalent to the former when we require that the connection have regular singularities. We write $\IC(X)$ for the intermediate extension of the trivial local system.
\end{convention}

In most scenarios, $X$ will also be affine and it will have an embedding into $\mathbb{A}^n$.

\section{$D$-modules as modules over a DG algebra}\label{$D$-modules as modules over a DG algebra}
Although when $X$ is singular the category of ${D}$-modules on $X$ can no longer be viewed as the category of modules over $\Diff(X)$, in this section, we find a nice substitute. This substitute will in general be a DG algebra rather than a usual ring. 

In \cite{Cusps_and_D-modules}, they showed in the case when $X$ has only cuspidal singularities, one can still use the ring of differential operators $\Diff(X)$ and the abelian category of $D$-modules on $X$ is equivalent to $\Diff(X)$-mod. We show that our DG algebra will reduce to $\Diff(X)$ and our equivalence reduces to the derived version of theirs in this case.

This section is divided into three parts: the first part deals with the general case, the second part deals with the cuspidal case and the last part deals with a vanishing result that we will need for Section \ref{quotient}. 

\begin{subsection}{General case}

Suppose $X$ is affine. We can choose $i:X\hookrightarrow V$ a closed embedding into a smooth affine variety $V$ (most of the time $V=\mathbb{A}^n$); note that if $X$ is smooth, we can just take $V$ to be $X$. {We define the Kashiwara's category $D\Modu_X$ of $D$-modules on $X$ to be the full subcategory of $D$-modules on $V$. It can be shown that this definition does not depend on the embedding $i$ (\cite[Corollary A.9]{Poisson_traces_D-modules_and_symplectic_resolutions}). } We define the following $D$-module on $X$: $D_X=\mathcal{I}_X {D}_V\backslash D_V$, where $\mathcal{I}_X$ is the defining ideal of $X$. 
This is clearly a right ${D}_V$-module that is supported on $X$, hence by Kashiwara's definition, a $D$-module on $X$. 
The module $D_X$ has the defining property Hom$(D_X,M) = \Gamma_X(M)$, the vector space of sections of $M$ scheme-theoretically supported on $X$ {(\emph{i.e.}, annihilated by $\mathcal{I}_X$ for some $n$)}. 
Note that if $X$ and $V$ are smooth, $D_X$ is just the usual transfer module ${D}_{X\to V}$. 
{The object $D_X$ does not depend on the choice of embedding.
Indeed, given two closed embeddings $i_k:V\hookrightarrow V_k$ for $k=1,2$, let ${\mathcal{I}_k}_X$ is the ideal defining $X$ in $V_k$ and $D_{X,k}:={\mathcal{I}_i}_X {D}_{V_i}\backslash D_{V_i}$. One can check that the equivalence of categories in \cite[Theorem A.8]{Poisson_traces_D-modules_and_symplectic_resolutions} will send $D_{X,1}$ to $D_{X,2}$.}
When $X$ is not affine, we glue the categories of the open subsets $U_i$ of a covering together to obtain a canonical abelian category of $D$-modules on $X$. 
The local objects $D_{U_i}$ glue together in a canonical way to get a global $D$-module. See \cite[Section A.2]{Poisson_traces_D-modules_and_symplectic_resolutions} and \cite[Section 1.7.2]{Introduction_to_algebraic_D-modules}.

We recall that an object $E$ in the derived category $\mathcal{T}$ of an abelian category $\mathcal{A}$ is called a generator if Hom$(E[i],C)=0$ for all $i\in \mathbb{Z}$, implies $C=0$. The category $\mathcal{T}$ is called cocomplete if it has arbitrary direct sums. An object $C \in \mathcal{T}$ is called
compact if Hom$(C,-)$ commutes with direct sums. See \cite[Section 2.1]{Categorical}.

We fix $X$ with a closed embedding into $V$. Let $\mathcal{A}=D_V\text{-mod}_X$ be the abelian category of quasi-coherent $D$-modules on $V$ supported on $X$ and $D^b_\mathcal{A}(D_V)$ be the full subcategory of $D^b(D_V)$ consisting of complexes with cohomology sheaves supported on $X$. 

We recall the following theorem from Gaitsgory--Rozenblyum:

\begin{theorem}\cite[Proposition 4.7.3]{D-modules_and_crystals}\label{Gaits}

Suppose $X$ is a variety, with a closed embedding into $V$. Then the inclusion functor 

\begin{equation}\label{Gaitsgory_Theorem}
    i:D^b(\mathcal{A})\to D^b_\mathcal{A}(D_V)\tag{$\dagger$}
  \end{equation}
is an equivalence of categories. In particular, it is fully faithful.
\end{theorem}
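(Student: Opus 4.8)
The plan is to produce an explicit quasi-inverse to $i$, namely the local cohomology functor $R\Gamma_X$. Write $\mathcal{B}=D_V\Modu$ for the quasi-coherent $D_V$-modules and recall that $\mathcal{A}=D_V\Modu_X$ is the full subcategory of those $M$ that are set-theoretically supported on $X$; since $V$ is affine this is exactly the condition that $M$ be $\mathcal{I}_X$-power-torsion, and $\mathcal{A}$ is a Serre subcategory of $\mathcal{B}$. The inclusion $\iota:\mathcal{A}\hookrightarrow\mathcal{B}$ is exact and admits a right adjoint $\Gamma_X$, sending $M$ to its largest submodule supported on $X$, so that $\Hom_\mathcal{B}(\iota A,M)=\Hom_\mathcal{A}(A,\Gamma_X M)$. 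As $\mathcal{B}$ is a category of modules over the ring $\Diff(V)=D_V$ it has enough injectives, hence $\Gamma_X$ right-derives to $R\Gamma_X$ while the exact functor $\iota$ right-derives to $i$. Standard derived adjunction then makes $(i,R\Gamma_X)$ an adjoint pair between $D^b(\mathcal{A})$ and $D^+(\mathcal{B})$.

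The heart of the argument is the single computation that $R\Gamma_X$ restricts to the identity on $\mathcal{A}$. I would prove this with the stable Koszul (\v{C}ech) complex: choosing generators $f_1,\dots,f_r$ of $\mathcal{I}_X$, the complex $[M\to\bigoplus_i M_{f_i}\to\bigoplus_{i<j}M_{f_if_j}\to\cdots]$ computes $R\Gamma_X(M)$, and for $M\in\mathcal{A}$ every element is killed by some power of each $f_i$, so each localization $M_{f_i}$ vanishes and the complex collapses to $M$ in degree $0$. Hence $R\Gamma_X(M)=M$ for $M\in\mathcal{A}$; in particular $R\Gamma_X$ sends $D^b_\mathcal{A}(\mathcal{B})$ into $D^b(\mathcal{A})$.

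Granting this, the equivalence follows by d\'evissage along the canonical truncation triangles $\tau_{\le n-1}C\to\tau_{\le n}C\to H^n(C)[-n]$. For $A^\bullet\in D^b(\mathcal{A})$ the unit $A^\bullet\to R\Gamma_X\, i\, A^\bullet$ is an isomorphism on each $H^n(A^\bullet)[-n]$, hence on $A^\bullet$; for $C\in D^b_\mathcal{A}(\mathcal{B})$ the counit $i\,R\Gamma_X\,C\to C$ is an isomorphism on each $H^n(C)[-n]$, hence on $C$. Thus unit and counit are invertible and $i$ is an adjoint equivalence, in particular fully faithful.

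The main obstacle is not the formal d\'evissage but the geometric input specific to $D$-modules underlying the key lemma: one must check that localization $M\mapsto M_{f_i}$ is an exact endofunctor of $\mathcal{B}$ (the $D_V$-action extends to $M_{f_i}$ by the Leibniz/quotient rule), that the \v{C}ech complex of a $D_V$-module is again a complex of $D_V$-modules computing $R\Gamma_X$ compatibly with the underlying $\mathcal{O}_V$-module local cohomology, and that `set-theoretically supported on $X$' coincides with $\mathcal{I}_X$-power-torsion, so that the vanishing $M_{f_i}=0$ applies. Once these compatibilities are in place the statement reduces to the commutative-algebra fact that local cohomology is the identity on torsion modules. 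Abstractly this is an instance of the general criterion for $D^b(\mathcal{A})\xrightarrow{\sim}D^b_\mathcal{A}(\mathcal{B})$ for a Serre subcategory whose inclusion admits a right adjoint with vanishing higher derived functors on $\mathcal{A}$, but the explicit local-cohomology proof above is the most self-contained route.
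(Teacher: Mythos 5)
The first thing to say is that the paper does not prove this statement at all: Theorem \ref{Gaits} is quoted from Gaitsgory--Rozenblyum (their Proposition 4.7.3), and the only surrounding commentary is that their formulation, in terms of the heart of $D^b_\mathcal{A}(D_V)$, is equivalent to the displayed one. So your argument is necessarily a different route from the paper's; it is the elementary, self-contained one, whereas the cited source works in a far more general categorical framework. Your outline is correct and is the standard way to prove such a statement directly: $\mathcal{A}$ is the Serre subcategory of $\mathcal{I}_X$-power-torsion modules, the inclusion $\iota$ has the torsion functor $\Gamma_X$ as right adjoint, the derived pair $(i,R\Gamma_X)$ is an adjunction, the stable Koszul complex shows that $R\Gamma_X$ restricts to the identity on $\mathcal{A}$, and the truncation d\'evissage converts that single computation into invertibility of the unit and counit. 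The one point you flag but leave unproved is in fact the only place where a real argument is needed, so let me record how to close it: you must know that the \v{C}ech complex $[M\to\bigoplus_i M_{f_i}\to\cdots]$ computes $R\Gamma_X$ \emph{derived in the category of $D_V$-modules}, not merely in $\mathcal{O}_V$-modules; this is not automatic, because the two categories have different injectives. It follows, however, from the fact that $D_V$ is free (in particular flat) as an $\mathcal{O}_V$-module: restriction of scalars along $\mathcal{O}_V\to D_V$ is right adjoint to the exact functor $-\otimes_{\mathcal{O}_V}D_V$, hence carries injective $D_V$-modules to injective $\mathcal{O}_V$-modules, and injective modules over a Noetherian ring have vanishing higher local cohomology; therefore the exact $\delta$-functor given by the cohomology of the \v{C}ech complex is effaceable on the category of $D_V$-modules and agrees with $R^\bullet\Gamma_X$ computed there. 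With that inserted (together with the routine facts that each localization $M_{f_i}$ is canonically a $D_V$-module and that set-theoretic support on $X$ coincides with $\mathcal{I}_X$-power torsion for quasi-coherent modules), your proof is complete and gives a considerably more down-to-earth justification of the theorem than the reference the paper relies on.
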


Note that in \cite{D-modules_and_crystals}, it is stated that $i:D^b(D^b_\mathcal{A}(D_V)^{\heart})\to D^b_\mathcal{A}(D_V)$ is an equivalence, but this is clearly an equivalent statement to the one above.

This result can be thought of as an analogue of Beilinson's result for perverse sheaves: the derived category of the abelian category of perverse sheaves is the derived constructible category.

This theorem is important because it shows that two natural derived categories of $D$-modules are equivalent.\\

The theorem below is a special case of a result of Gaitsgory--Rozenblyum, as we will explain, but with a more explicit proof.

\begin{theorem}\label{ourtheo}
Let $X$ be an affine variety, then the module $D_X$ is a compact generator in $D^b_\mathcal{A}(D_V)$.
\end{theorem}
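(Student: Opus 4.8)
The plan is to verify separately that $D_X$ is compact and that it generates, reducing the generation statement to the abelian category $\mathcal{A}$ via Theorem \ref{Gaits}. Throughout I would interpret ``compact'' with respect to the ambient cocomplete category $D_\mathcal{A}(D_V)$ of unbounded complexes supported on $X$, which is closed under arbitrary coproducts because a coproduct of modules supported on $X$ is again supported on $X$. It therefore suffices to check compactness of $D_X$ in the full derived category $D(D_V)$ of right $D_V$-modules and to note that compactness is inherited by the full subcategory.

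For compactness I would argue that $D_X$ is a perfect complex. Since $X$ is affine and $V$ is smooth affine, $R=\mathcal{O}(V)$ is Noetherian and $\mathcal{I}_X=(f_1,\dots,f_r)$ is finitely generated; one checks that the right ideal $\mathcal{I}_X D_V$ equals $\sum_i f_i D_V$, so that $D_X=D_V/\mathcal{I}_X D_V$ is a finitely presented cyclic right $D_V$-module. As $D_V$ is Noetherian of finite global dimension (equal to $\dim V$), every finitely generated module admits a finite resolution by finitely generated free modules; hence $D_X$ is perfect, and therefore compact.

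For generation the key input is the identification $\Hom_{D_V}(D_X,M)=\{m\in M:m\cdot\mathcal{I}_X=0\}$ recorded above. First I would observe that $D_X$ already generates the abelian category $\mathcal{A}$: if $0\neq M\in\mathcal{A}$, choose $0\neq m\in M$ and let $n\geq 1$ be minimal with $m\cdot\mathcal{I}_X^{\,n}=0$; then any nonzero element of $m\cdot\mathcal{I}_X^{\,n-1}$ is killed by $\mathcal{I}_X$, giving a nonzero map $D_X\to M$. By Theorem \ref{Gaits} we may then test generation in $D^b(\mathcal{A})$, under which $D_X\in\mathcal{A}$ corresponds to itself. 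Given $0\neq M\in D^b(\mathcal{A})$, let $k$ be the least integer with $H^k(M)\neq 0$ and apply $\Hom(D_X[-k],-)$ to the truncation triangle $H^k(M)[-k]\to M\to\tau_{>k}M\to$. The relevant error term is $\Hom(D_X,\tau_{>k}M[k-1])$, whose constituent cohomology sits in degrees $\geq 2$; since only $\Ext^{p}(D_X,-)$ with $p\geq 0$ can contribute, this group vanishes. Hence $\Hom(D_X,H^k(M))$ injects into $\Hom(D_X[-k],M)$, and the former is nonzero by the abelian generation above, so $\Hom(D_X[-k],M)\neq 0$, contradicting $M$ lying in the right orthogonal of all $D_X[i]$.

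The main obstacle is precisely the phenomenon that motivates the paper: $D_X$ can have nonzero higher self-extensions, so one cannot hope for a naive surjectivity argument starting from the \emph{top} cohomology. The remedy is to work from the bottom cohomology and exploit the vanishing of negative Ext-groups, which is why the argument only ever invokes $\Ext^{<0}(D_X,-)=0$ rather than any control of $\Ext^{\geq 1}(D_X,-)$. A secondary point to handle carefully is the cocompleteness bookkeeping required to state ``compact'' correctly, together with the verification that the equivalence of Theorem \ref{Gaits} does carry $D_X$ to itself so that the two generation statements genuinely transfer.
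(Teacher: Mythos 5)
Your proof is correct, and on the generation half it is essentially the paper's argument: you use the same socle-type construction (minimal $n$ with $m\cdot\mathcal{I}_X^{\,n}=0$) to produce a nonzero map $D_X\to M$ for every nonzero $M\in\mathcal{A}$, and then reduce a general complex to its bottom cohomology. The paper compresses that reduction into one sentence (``there is a map from $D_X$ to a bounded complex starting $M$''), whereas you spell out the truncation triangle and check that the error term $\Hom(D_X,\tau_{>k}M[k-1])$ vanishes because it only involves $\Ext^{p}(D_X,-)$ with $p\geq 0$ against cohomology in degrees $\geq 2$; this is exactly the right justification, and your remark that one must work from the bottom rather than the top cohomology (since $\Ext^{\geq 1}(D_X,-)$ need not vanish) is the correct subtlety. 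On the compactness half you take a genuinely different, though equally valid, route: the paper resolves $\mathcal{O}_X$ by a finite complex of projective $\mathcal{O}_V$-modules, using only the finite global dimension of the commutative ring $\mathcal{O}_V$, and then applies the exact functor $-\otimes_{\mathcal{O}_V}D_V$ to get a finite projective resolution of $D_X=\mathcal{O}_X\otimes_{\mathcal{O}_V}D_V$; you instead resolve $D_X$ directly over $D_V$, invoking that $D_V$ is Noetherian of finite global dimension. Your version depends on these nontrivial facts about rings of differential operators (standard, e.g. in McConnell--Robson), while the paper's needs only commutative regularity of $V$ plus flatness of $D_V$ over $\mathcal{O}_V$, so it is more self-contained; in exchange, your argument makes the finite presentation $D_X=D_V/\sum_i f_iD_V$ explicit and applies verbatim to any finitely generated $D_V$-module supported on $X$. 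Finally, your bookkeeping about where compactness is tested (in the cocomplete unbounded category $D_\mathcal{A}(D_V)$, which is closed under coproducts inside $D(D_V)$, so compactness passes to the full subcategory) addresses a point the paper leaves implicit, and you handle it correctly.
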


\begin{proof}
Generation: To show it is a generator, it is enough to observe that $D_X$ has a nonzero map to every $D$-module $M$ supported on $X$, as then there is a map from $D_X$ to a bounded complex starting $M$. Take $M$ to be a non-zero $D$-module supported on $X$, then because every element is annihilated by $\mathcal{I}_X^n$ for some $n$, for $0\neq m\in M$, we can choose $n$ to be such that $\mathcal{I}_X^n \cdot m=0$ and $\mathcal{I}_X^{n-1} \cdot m\neq 0$, choose $m'\in \mathcal{I}_X^{n-1} \cdot m$. Hence there is a non-zero map sending $1\in D_X$ to this element $m'$.

Compactness: Recall compactness is equivalent to perfectness in derived categories. A perfect complex is a finite complex of locally projective objects. Since $\mathcal{O}_V$ has finite global dimension, we can take a finite projective resolution $P^\bullet$ of $\mathcal{O}_X$ as an $\mathcal{O}_V$-mod. Since $D_X=\mathcal{O}_X\otimes_{\mathcal{O}_V}D_V$, we have that $P^\bullet \otimes_{\mathcal{O}_V}D_V$ is a finite projective $D$-module resolution of $D_X$. This completes the proof. 
\end{proof}

\begin{remark}
If $Y$ is not affine, this construction will still produce a compact object which is locally a generator. 
\end{remark}

\begin{remark}
In \cite[Corollary 3.3.3]{D-modules_and_crystals}, the authors proved a more general statement than Theorem 2.2, for a general variety $X$ (not necessarily affine), replacing $\mathcal{O}_X$ by a compact generator $M$ of $\mathcal{O}_X$-mod, so that the compact generator of $D$-mod is the \textit{induction} of $M$.  This induction makes sense in general, but in the case that $X$ is embedded into a smooth affine variety $V$, it's $i_* M \otimes^\mathbb{L}_{\mathcal{O}_V} D_V$.
\end{remark}

An abelian category $\mathcal{A}$ is called a Grothendieck category if it has a g-object, small colimits and the filtered colimits are exact. Recall that an object $G \in \mathcal{A}$ is called a g-object if the functor $X\to \Hom_\mathcal{A}(G, X)$ is conservative, \textit{i.e.} $X \to Y$ is an isomorphism as soon as $\Hom(G, X) \to \Hom(G, Y)$ is an isomorphism. In the case of a cocomplete abelian category, this is equivalent to saying that every object $X$ of $\mathcal {C}$ admits an epimorphism  $G^{(S)}\rightarrow X$, where $G^{{(S)}}$ denotes a direct sum of copies of $G$, one for each element of the (possibly infinite) set $S$. Such an object $G$ is usually called a generator, but we already used this term previously. For more detail, see \cite[Section 2.4]{Categorical}.

Note that the abelian category of quasi-coherent $D$-modules on an affine variety $X$ is a Grothendieck category.  This fact mentioned in \cite[Section 4.7]{D-modules_and_crystals}, but we give more detail here.
We only need to show if it has a g-object, as the other axioms are obvious. We let $G=\bigoplus_n I^n D_V\backslash D_V$, where $i:X\to V=\mathbb{A}^n$ is a closed embedding and $I$ is the defining ideal. This is a g-object since if $M$ is supported on $X$, then every element is killed by some element in $I^n$. It implies that there is a surjective map from $G^{(S)}$ to $M$.

\begin{remark}
As any Grothendieck category has enough injectives, the above implies that the abelian category of $D$-modules on a variety $X$ has enough injectives. 
\end{remark}

We recall the following fact about Grothendieck categories (see \cite{derivingdgcat}, but we are using the version found in \cite[Proposition 2.6]{Categorical}): 

\begin{proposition}\label{ourprop}
Let $\mathcal{A}$ be a Grothendieck category such that the triangulated category
$D(\mathcal{A})$ has a compact generator $E$. Denote by $A$ the DG algebra REnd$(E)$. Then the
functor RHom$(E,-)$ : $D(\mathcal{A}) \to D(A$-mod$)$ is an equivalence of categories. 
\end{proposition}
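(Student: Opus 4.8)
The plan is to exhibit $F := \RHom(E,-)$ as one half of an adjoint equivalence and to verify that both the unit and the counit are isomorphisms by a localizing-subcategory (dévissage) argument, the compactness of $E$ entering at exactly one crucial point; this is the standard Morita/tilting recipe for a compactly generated triangulated category. First I would pin down the target of the functor. Because $A = \REnd(E)$ acts on $E$, for any $M \in D(\mathcal{A})$ the complex $\RHom(E,M)$ carries a natural (right) $A$-module structure by precomposition, so $F$ genuinely lands in $D(A\Modu)$; here $\RHom$ and $\REnd$ are computed via K-injective resolutions, which exist because $\mathcal{A}$ is Grothendieck and hence has enough injectives. The candidate inverse is the derived tensor $G := (-)\otimes^{\mathbb{L}}_A E : D(A\Modu) \to D(\mathcal{A})$, and the adjunction $G \dashv F$ is the derived tensor--Hom adjunction.

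The core of the argument is the behaviour on the generator. By definition $F(E) = \RHom(E,E) = A$, the free rank-one DG module, while $G(A) = A \otimes^{\mathbb{L}}_A E = E$. Thus on $E$ the counit $\epsilon_E : GF(E) \to E$ is the identity, hence an isomorphism. I would then invoke that $F$ \emph{commutes with arbitrary direct sums} — this is precisely the compactness of $E$, and it is the single place where that hypothesis is indispensable — whereas $G$, being a left adjoint, preserves coproducts automatically. Since both functors are triangulated, the full subcategory of objects $M$ for which $\epsilon_M$ is an isomorphism is closed under triangles, coproducts and retracts, i.e.\ it is localizing. It contains the compact generator $E$, and the smallest localizing subcategory containing a generator is the whole category (for any $C$, Bousfield localization along this subcategory gives a triangle whose cone lies in the right orthogonal of $E$, which vanishes by the generator property). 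Hence $\epsilon$ is an isomorphism everywhere, and $F$ is fully faithful.

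It then remains to show $F$ is essentially surjective. Since $F$ is fully faithful and preserves coproducts and triangles, its essential image is a localizing subcategory of $D(A\Modu)$ containing $F(E) = A$. As $A$ is a compact generator of $D(A\Modu)$, the same ``localizing subcategory containing a generator'' principle forces the image to be all of $D(A\Modu)$, so $F$ is an equivalence. Equivalently, one verifies the unit $\eta_A : A \to FG(A)$ is an isomorphism and repeats the dévissage inside $D(A\Modu)$.

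The main obstacle I anticipate is not the formal dévissage but the bookkeeping that makes the DG enhancement honest: one must ensure $\RHom(E,-)$ lifts to a genuine functor to \emph{DG} $A$-modules rather than merely to modules over the cohomology ring $H^\bullet(A)$, which requires a fixed point-set model for $\REnd(E)$ together with functorial K-injective (or K-projective) resolutions so that the $A$-action is strictly associative and $G \dashv F$ holds on the nose. The compactness input — that $\Hom(E,-)$, and therefore $\RHom(E,-)$, commutes with coproducts — is the load-bearing hypothesis, and I would isolate it explicitly rather than let it hide inside the adjunction formalism.
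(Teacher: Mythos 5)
Your proof is correct, but there is nothing in the paper to compare it against: the paper does not prove this proposition, it quotes it as a known fact, citing Keller's \emph{Deriving DG categories} and Proposition 2.6 of the categorical survey it references. Your argument is essentially the standard proof underlying those citations: fix a point-set model for $\REnd(E)$ via K-injective resolutions (available since $\mathcal{A}$ is Grothendieck), form the adjunction $-\otimes^{\mathbb{L}}_A E \dashv \RHom(E,-)$, check the unit and counit on the generator, and propagate by d\'evissage using that the smallest localizing subcategory containing a compact generator is the whole category, together with the analogous fact that $A$ compactly generates $D(A\Modu)$.

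One small imprecision worth flagging: compactness of $E$ enters in \emph{two} places, not the single place you isolate. Besides making $\RHom(E,-)$ commute with coproducts, it is also what legitimizes the Bousfield localization triangle you invoke when showing $\mathrm{Loc}(E)=D(\mathcal{A})$: the inclusion $\mathrm{Loc}(E)\hookrightarrow D(\mathcal{A})$ admits a right adjoint because $\mathrm{Loc}(E)$ is compactly generated, so Brown representability applies to it (alternatively, one can appeal to well-generatedness of $D(\mathcal{A})$ for $\mathcal{A}$ Grothendieck). Similarly, closure of your two d\'evissage subcategories under retracts is not automatic from the triangulated structure alone; it follows from closure under countable coproducts by the B\"okstedt--Neeman swindle. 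Neither point is a gap, since both are standard, but in a self-contained write-up they should be cited or proved rather than absorbed silently into the phrase ``localizing subcategory.''
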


Here $D(A$-mod$)$ denotes the derived category of right {DG} modules. 

\begin{remark}
The inverse functor is given by $M\mapsto M\otimes_{\REnd{(E)}}E$.
\end{remark}

Notice that in the case $H^{\bullet} (A)$ is bounded in degree, the functor RHom$(E,-)$ sends $D^b(\mathcal{A}) \to D^b(A$-mod$)$. This will also be an equivalence since the inverse also sends $D^b(A$-mod$)\to D^b(\mathcal{A})$.  

Combining Theorem \ref{Gaits}, Theorem \ref{ourtheo} and Proposition \ref{ourprop}, we get the following corollary.

\begin{corollary}
There is an equivalence of categories between the bounded derived category of quasi-coherent $D$-modules on an affine variety $X$ and $D^b(A$-mod$)$, where $A$ is the DG algebra $\REnd(D_X)$.
\end{corollary}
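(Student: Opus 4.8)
The plan is to assemble the three results just quoted into a single chain of equivalences, being careful only about the passage between bounded and unbounded derived categories. By the Kashiwara-style definition in the affine case, the category of quasi-coherent $D$-modules on $X$ is precisely the abelian category $\mathcal{A}=D_V\Modu_X$, so the bounded derived category of $D$-modules on $X$ is $D^b(\mathcal{A})$. Theorem \ref{Gaits} identifies this with $D^b_\mathcal{A}(D_V)$ via the inclusion functor, and under this identification $D_X$ becomes, by Theorem \ref{ourtheo}, a compact generator of $D^b(\mathcal{A})$.

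To invoke Proposition \ref{ourprop} I first need $D_X$ to be a compact generator of the \emph{unbounded} category $D(\mathcal{A})$, since that is the setting in which the proposition is stated. Compactness transfers for free: the proof of Theorem \ref{ourtheo} exhibits $D_X$ as a perfect complex (the finite complex $P^\bullet\otimes_{\mathcal{O}_V}D_V$), so $\RHom(D_X,-)$ commutes with arbitrary direct sums in $D(\mathcal{A})$ and not merely in the bounded subcategory. For generation I would use the Grothendieck structure recorded above: $\mathcal{A}$ has the explicit g-object $G=\bigoplus_n\mathcal{I}_X^n D_V\backslash D_V$, every nonzero object of $\mathcal{A}$ receives a nonzero map from $D_X$ (the generation argument of Theorem \ref{ourtheo}), and exactness of filtered colimits lets me propagate this to unbounded complexes by truncation, so that the vanishing of $\RHom(D_X,C)$ forces every cohomology object $H^j(C)$, and hence $C$, to vanish.

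Having promoted $D_X$ to a compact generator of $D(\mathcal{A})$, I apply Proposition \ref{ourprop} with $E=D_X$ and $A=\REnd(D_X)$ to obtain an equivalence $\RHom(D_X,-)\colon D(\mathcal{A})\xrightarrow{\sim}D(A\Modu)$, with quasi-inverse $M\mapsto M\otimes_{A}D_X$. It remains to descend this to the bounded categories. Here I use that $D_X$ is perfect: its endomorphism DG algebra $A=\REnd(D_X)$ has $H^\bullet(A)=\Ext^\bullet(D_X,D_X)$ bounded in degree (concentrated between $0$ and $\dim X$, by the finite projective resolution of $D_X$ over $D_V$). By the remark following Proposition \ref{ourprop}, boundedness of $H^\bullet(A)$ guarantees that $\RHom(D_X,-)$ carries $D^b(\mathcal{A})$ into $D^b(A\Modu)$ and that the inverse carries $D^b(A\Modu)$ back into $D^b(\mathcal{A})$, so the equivalence restricts to $D^b(\mathcal{A})\cong D^b(A\Modu)$, which is exactly the claimed statement.

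The only genuine obstacle is the bounded-versus-unbounded bookkeeping in the middle step: Proposition \ref{ourprop} lives in the unbounded world while Theorems \ref{Gaits} and \ref{ourtheo} are phrased boundedly, so the crux is to check that compact generation of $D^b(\mathcal{A})$ by a perfect object really does upgrade to compact generation of $D(\mathcal{A})$. Everything else is formal once this is in place, and the boundedness of $H^\bullet(\REnd(D_X))$ — a consequence of the finite projective resolution of $D_X$ over $D_V$ — is precisely what makes the final restriction to bounded complexes legitimate.
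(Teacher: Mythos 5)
Your proposal follows exactly the paper's route: it combines Theorem \ref{Gaits}, Theorem \ref{ourtheo} and Proposition \ref{ourprop}, then restricts to bounded categories using the boundedness of $H^\bullet(\REnd(D_X))$, precisely as in the remark following Proposition \ref{ourprop}. The extra care you take in upgrading compact generation from $D^b(\mathcal{A})$ to the unbounded category $D(\mathcal{A})$ (via perfectness and truncations) fills in a step the paper leaves implicit, but it is the same argument, not a different one.
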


This gives a proof of Theorem \ref{introtheorem1}: define $\Diff(X)^{dg}:=\REnd(D_X)$ and $D\Modu_X^{dg}$ as one of the categories in the equivalence (\ref{Gaitsgory_Theorem}) of Theorem \ref{Gaits}. And we get a triangulated equivalence $$D\Modu_X^{dg}\cong \Diff(X)^{dg}\Modu.$$

\begin{remark}
If $X$ is not affine, but $X\hookrightarrow V$ is globally embedded, we will still have an equivalence of $D(\mathcal{A})\to \mathcal{RE}nd(D_X)$-mod, a category of sheaves of modules on $X$.
\end{remark}

The following description of the (underived) endomorphisms of $D_X$, proved by an explicit computation on $V$, has been known to experts for a long time (see \cite[Theorem 15.3.15]{Noncommutative_Noetherian_Rings}, \cite[Theorem 1.7.1]{Introduction_to_algebraic_D-modules}). While the formula is old, we got the idea to think of it in terms of the object $D_X$ from \cite{Introduction_to_algebraic_D-modules}.

\begin{theorem}\label{eti}
There is a canonical filtered isomorphism  $\phi:  \text{End}(D_X)\to \text{Diff}(X)$.
\end{theorem}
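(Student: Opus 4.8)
The plan is to identify both $\End(D_X)$ and $\Diff(X)$ with the same subquotient of $D_V$ and match them compatibly with filtrations. First I would exploit that $D_X = \mathcal{I}_X D_V\backslash D_V = D_V/\mathcal{I}_X D_V$ is a cyclic right $D_V$-module generated by the image $\bar 1$ of $1$. Hence any right $D_V$-module endomorphism $\psi$ is determined by $\psi(\bar 1)=\bar P$ for some $P\in D_V$, and acts by $\bar Q\mapsto \overline{PQ}$. Well-definedness forces $P$ into the idealizer $\mathbb{I}:=\{P\in D_V : P\,\mathcal{I}_X D_V\subseteq \mathcal{I}_X D_V\}$, and two elements of $\mathbb{I}$ give the same endomorphism exactly when they differ by an element of $\mathcal{I}_X D_V$. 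Since composition of such endomorphisms corresponds to multiplication in $D_V$, this yields a ring isomorphism $\End(D_X)\cong \mathbb{I}/\mathcal{I}_X D_V$, filtered by operator order.

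Next I would make the idealizer explicit, and this is where smoothness of $V$ first enters. Writing $D_V=\bigoplus_\alpha \mathcal{O}_V\partial^\alpha$ as a free left $\mathcal{O}_V$-module in local (étale) coordinates, a triangular induction on order shows $\mathcal{I}_X D_V=\{T\in D_V : T(\mathcal{O}_V)\subseteq \mathcal{I}_X\}$: applying $T$ to monomials recovers its coefficients modulo $\mathcal{I}_X$. Consequently, since $\mathcal{I}_X D_V$ is generated on the left by $\mathcal{I}_X$ and $P\cdot(fQ)=(Pf)Q$, the idealizer condition collapses to $\mathbb{I}=\{P\in D_V : P(\mathcal{I}_X)\subseteq \mathcal{I}_X\}$, the operators preserving the ideal. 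Thus $\End(D_X)\cong \{P : P(\mathcal{I}_X)\subseteq\mathcal{I}_X\}/\{P : P(\mathcal{O}_V)\subseteq \mathcal{I}_X\}$.

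With this description the map $\phi$ is simply descent: an operator $P$ preserving $\mathcal{I}_X$ induces $\bar P:\mathcal{O}_X\to\mathcal{O}_X$ on $\mathcal{O}_X=\mathcal{O}_V/\mathcal{I}_X$, which is a Grothendieck differential operator of order $\le \mathrm{ord}(P)$ (the commutator characterization descends, so one inducts on order). Its kernel is exactly the denominator $\{P : P(\mathcal{O}_V)\subseteq\mathcal{I}_X\}=\mathcal{I}_X D_V$, so $\phi$ is a well-defined injective filtered ring homomorphism $\End(D_X)\to \Diff(X)$.

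The main obstacle is surjectivity: every Grothendieck operator on $X$ must lift to an operator on $V$ preserving $\mathcal{I}_X$, and this is precisely where smoothness of $V$ is essential. Here I would invoke Grothendieck's jet description $\Diff^{\le n}(-)=\Hom_{\mathcal{O}}(\mathcal{P}^n_{-},\mathcal{O})$, where $\mathcal{P}^n$ is the sheaf of principal parts, together with the fact that $\mathcal{P}^n_V$ is a projective (locally free) $\mathcal{O}_V$-module because $V$ is smooth. Given $\theta\in\Diff^{\le n}(X)=\Hom_{\mathcal{O}_X}(\mathcal{P}^n_X,\mathcal{O}_X)$, I precompose with the natural surjection $\mathcal{P}^n_V\twoheadrightarrow\mathcal{P}^n_X$ to obtain an $\mathcal{O}_V$-linear map $\mathcal{P}^n_V\to\mathcal{O}_X$; projectivity of $\mathcal{P}^n_V$ then lifts it through $\mathcal{O}_V\twoheadrightarrow\mathcal{O}_X$ to an element $P\in\Hom_{\mathcal{O}_V}(\mathcal{P}^n_V,\mathcal{O}_V)=D_V^{\le n}$. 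By construction $\overline{P(g)}=\theta(\bar g)$ for all $g\in\mathcal{O}_V$, so $P$ preserves $\mathcal{I}_X$ (take $\bar g=0$) and descends to $\theta$. This establishes surjectivity, and since both descent and lift respect the order filtration, $\phi$ is the desired canonical filtered isomorphism.
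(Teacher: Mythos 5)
Your proposal is correct and takes essentially the same route as the paper: the paper gives no proof of its own but cites this as a classical result "proved by an explicit computation on $V$" (McConnell--Robson, Theorem 15.3.15, and Etingof's notes, Theorem 1.7.1), and those classical proofs are precisely your argument --- identify $\End(D_X)$ with the idealizer quotient $\{P: P\,\mathcal{I}_X D_V\subseteq \mathcal{I}_X D_V\}/\mathcal{I}_X D_V$ using cyclicity of $D_X$, characterize $\mathcal{I}_X D_V$ as $\{T: T(\mathcal{O}_V)\subseteq \mathcal{I}_X\}$ by the coordinate computation, and use smoothness of $V$ (projectivity of the principal parts module) to lift Grothendieck operators on $X$ to operators on $V$ preserving $\mathcal{I}_X$. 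No gaps to report; your write-up supplies the details the paper delegates to the references.
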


Therefore, the higher DG structure of REnd($D_X$) `detects' singularities and it serves as a `correction' and `error term' to $\Diff(X)$. 

\end{subsection}

\begin{subsection}{Cuspidal Case}

We now turn the attention to the cuspidal case.  We say $f:Y\to X$ is a \textit{cuspidal quotient morphism} if it is a universal homeomorphism and $X$ and $Y$ are Cohen--Macaulay. It is a \textit{good} cuspidal quotient morphism if, in addition, a certain local cohomology sheaf vanishes, which will be automatically satisfied if $X$ (or $Y$) is a smooth variety, see \cite[Section 2]{Cusps_and_D-modules}. We say a Cohen--Macaulay variety $X$ is cuspidal if there is a cuspidal quotient morphism from a smooth variety to $X$. In the curve case, this is equivalent to the normalisation map is a bijective resolution of singularities. Examples include the normalization map of a curve with cusp singularities, the normalization map $\mathfrak{h}\to X_m$ of the space of
quasiinvariants for a Coxeter group, and the Frobenius homeomorphism in characteristic p, see \cite[Section 1.2]{Cusps_and_D-modules}. There are also examples from geometry of Lie algebras, see \cite[Theorem 4.4]{Deformations_of_symplectic_singularities_and_Orbit_method_for_semisimple_Lie_algebras}.

We can calculate $f^!D_X$ in this case. Consider $Y\to X \hookrightarrow \mathbb{A}^n$, and both $Y$ and $\mathbb{A}^n$ are smooth. {Let $I$ be the ideal defining $X$ and denote $\Diff_X(M,N)$ to be $\mathcal{O}_X$-linear differential operators from $M$ to $N$, where $M$ and $N$ are $\mathcal{O}_X$-modules.} 
Then 
\begin{align*}
    f^!D_X=& I{D}_{\mathbb{A}^n}\backslash{D}_{\mathbb{A}^n}\otimes_{{D}_{\mathbb{A}^n}}{D}_{\mathbb{A}^n\leftarrow Y}\\
    =&I{D}_{\mathbb{A}^n\leftarrow Y}\backslash{D}_{\mathbb{A}^n\leftarrow Y}\\
    =&\Diff_{\mathbb{A}^n}(\mathcal{O}_{Y},I)\backslash\Diff_{\mathbb{A}^n}(\mathcal{O}_{Y},\mathcal{O}_{\mathbb{A}^n})\\
    =&\Diff_{\mathbb{A}^n}(\mathcal{O}_{Y},\mathcal{O}_X)\\
    =&\Diff_{X}(\mathcal{O}_{Y},\mathcal{O}_X).
\end{align*} 

In \cite{Cusps_and_D-modules}, they used $\Diff(X)$-modules rather than Kashiwara's category $D\Modu_X$. They showed that the two approaches are equivalent in the good cuspidal case in Corollary 4.4 of their paper. Their key results in the affine case are summarised in the following theorem:

\begin{theorem}\label{bzn}

If $f:Y\to X$ is a good cuspidal quotient morphism, then the followings hold:
\begin{enumerate}[label={(\arabic*)}]
    \item $D\Modu_Y\cong D\Modu_X$ via $f^!$ and $f_*$.
    \item $\Diff(Y)\Modu\cong \Diff(X)\Modu$ induced by tensoring with  transfer bimodules ${D}_{X\leftarrow Y}$, ${D}_{X\to Y}$.
    \item If $Y$ is smooth, $D\Modu_X\cong \Diff(X)\Modu$.
\end{enumerate}
\end{theorem}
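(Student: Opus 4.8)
This theorem collects results of \cite{Cusps_and_D-modules}; here I outline the route I would take, which leans on the compact generator $D_X$ of this section and is close in spirit to theirs. The backbone is part (1): once $f^!$ and $f_*$ are shown to be mutually inverse equivalences, part (2) is a Morita statement and part (3) is a formal composition.

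For (1), since $f$ is finite and proper, $f_*$ is left adjoint to $f^!$, with counit $\epsilon: f_* f^! \to \Id$ and unit $\eta: \Id \to f^! f_*$. By Theorem \ref{ourtheo} the categories $D\Modu_X$ and $D\Modu_Y$ are compactly generated by $D_X$ and $D_Y$, and for finite $f$ both $f_*$ and $f^!$ preserve coproducts, so the counit is a natural transformation of continuous endofunctors of $D\Modu_X$ and it suffices to check it on the generator $D_X$. Thus the whole of (1) reduces to the single isomorphism $f_* f^! D_X \cong D_X$. Granting this, counit-iso on a generator gives counit-iso everywhere, which is equivalent to $f^!$ being fully faithful; and then the triangle identity $\epsilon_{f_*N}\circ f_*(\eta_N)=\Id_{f_*N}$, together with the fact that $f_*$ is conservative (as $f$ is a homeomorphism), forces $f_*(\eta_N)$ and hence $\eta_N$ to be isomorphisms for every $N$. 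Therefore $\eta$ is also an iso and $f^!$ is an equivalence with inverse $f_*$. I expect the reduction step to be routine and the isomorphism $f_* f^! D_X \cong D_X$ to be the main obstacle.

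That key computation is exactly where the hypotheses enter. Using the explicit identification $f^! D_X = \Diff_X(\mathcal{O}_Y,\mathcal{O}_X)$ established above, it becomes a statement about pushing forward a sheaf of differential operators along $f$, and I would attack it as a local cohomology / Grothendieck duality calculation: the Cohen--Macaulay hypotheses place the relevant dualizing complexes in a single cohomological degree, and the ``good'' condition is precisely the vanishing of the one local cohomology sheaf that would otherwise obstruct the comparison. One should not hope for this from the universal-homeomorphism property alone: already for the cuspidal cubic the identity of $X$ does not lift to the normalisation, so the de Rham spaces of $Y$ and $X$ are \emph{not} isomorphic, and it is genuinely the good-Cohen--Macaulay input that rescues the statement at the level of $D$-modules.

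For (2) I would prove a Morita equivalence implemented by the transfer bimodules: the content is that the two transfer bimodules $D_{X\leftarrow Y}$ and $D_{X\to Y}$ become mutually inverse, their tensor products over $\Diff(Y)$ and over $\Diff(X)$ recovering $\Diff(X)$ and $\Diff(Y)$ respectively, after which tensoring yields inverse equivalences $\Diff(Y)\Modu\cong\Diff(X)\Modu$. These bimodule identities rest on the same local cohomology vanishing as in (1) --- and fail for non-cuspidal singularities such as a node --- so the obstacle is concentrated in the same place. Finally, for (3), when $Y$ is smooth we have the standard equivalence $D\Modu_Y\cong\mathcal{D}_Y\Modu=\Diff(Y)\Modu$; chaining it with (1) and (2) gives $D\Modu_X\cong D\Modu_Y\cong\Diff(Y)\Modu\cong\Diff(X)\Modu$. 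The only remaining point is that the composite is the expected functor, namely induction along the transfer bimodule of $f$; this is automatic because every arrow in the chain is realised by tensoring with a transfer (bi)module and these compose correctly.
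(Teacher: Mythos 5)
Theorem \ref{bzn} is not actually proved in the paper: it is a summary of Ben-Zvi--Nevins' results (their Theorem 4.3 and Corollary 4.4), with the paper only observing that (3) follows from (1) and (2) and later proving a weakened form of (1) as Proposition \ref{prop}. Measured against what those results assert, your proposal has a genuine gap: statement (1) is an equivalence of \emph{abelian} categories --- $D\Modu_X$ and $D\Modu_Y$ are Kashiwara's abelian categories --- whereas your entire argument (compact generators, coproduct-preserving exact functors, the localizing-subcategory reduction of the counit to $D_X$, conservativity of $f_*$ plus the triangle identity) lives in the triangulated world and can only ever produce a \emph{derived} equivalence. You never address $t$-exactness: why $f^!M$ is again concentrated in degree $0$ for $M$ in the heart, equivalently why $f^!D_X={D}_{X\leftarrow Y}$ is \emph{projective} over $\Diff(Y)$. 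That exactness is the real content of the ``good'' hypothesis in Ben-Zvi--Nevins, and it is precisely the step your sketch omits; it is also what the paper uses immediately after this theorem to conclude $\Ext^{>0}(D_X,M)=0$ in the good cuspidal case.

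Compounding this, you locate the hypotheses at the wrong step, and the claim you make there is false. The derived isomorphism $f_*f^!D_X\cong D_X$ does \emph{not} require goodness or Cohen--Macaulayness: Proposition \ref{prop} of the paper shows that for an \emph{arbitrary} universal homeomorphism $f$ one has $f^!f_*\cong\Id$ and $f_*f^!\cong\Id$ as derived functors, proved by proper base change on $Y\times_X Y$ (whose reduction is the diagonal copy of $Y$, so Kashiwara's theorem applies) together with a generic-point support argument --- no local cohomology or duality input appears. Your de Rham space objection is a red herring: although $Y_{dR}\to X_{dR}$ need not be an isomorphism of prestacks (the identity of the cusp indeed fails to lift to the normalisation), the induced functor on $D$-modules is nonetheless an equivalence. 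Conversely, where hypotheses genuinely are needed, your sketch supplies nothing: for $W$ the union of $n\geq 2$ lines in the plane (a seminormal curve, related to other curves by universal homeomorphisms, so the derived equivalences of Proposition \ref{prop} all hold there), the paper computes $\Ext^1(D_W,\IC(\Omega_{\mathbb{G}_m}))\neq 0$, so the abelian category of $D$-modules is not equivalent to $\Diff$-modules and the relevant $f^!D_X$ fails to be projective. In short, your reduction machinery proves (a weaker form of) the derived statement while invoking hypotheses it does not need, and the abelian statement that Theorem \ref{bzn} actually makes requires an exactness/projectivity argument you have not supplied; parts (2) and (3) inherit the same defect, since the Morita equivalence in (2) rests exactly on the two-sided projectivity of ${D}_{X\leftarrow Y}$.
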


Note that (3) follows from (1) and (2), and we will strengthen (1) in Proposition \ref{prop}.

Since, by our definition, varieties are reduced, in the curve case the CM condition is automatically satisfied. The theorem generalises the curve case result found in \cite{DIFFERENTIAL_OPERATORS_ON_AN_AFFINE_CURVE} saying that
the category of $D$-modules on a cuspidal curve is Morita equivalent to the category of $D$-modules on its (smooth) normalization. This is a generalisation, because for cuspidal curves, the normalisation map is a universal homeomorphism, which is a cuspidal quotient morphism.

The transfer bimodules ${D}_{X\leftarrow Y}$, ${D}_{Y\to X}$ of the equivalence between $\Diff(X)\Modu$ and $\Diff(Y)\Modu$  coincide with the usual transfer bimodules when $Y$ and $X$ are smooth. More generally, Corollary 2.14 of \cite{Cusps_and_D-modules} says ${D}_{X\leftarrow Y}=\Diff_{X}(\mathcal{O}_{Y},\mathcal{O}_X)$. This is shown to be projective as a left module over $\Diff(X)$ and as a right module over $\Diff(Y)$ in the Morita equivalence in Theorem 4.3 of \cite{Cusps_and_D-modules}. 

Because $f$ induces an equivalence {between $D\Modu_Y$ and $D\Modu_X$}, we must have
\begin{align*}
    \Ext^i_{D\Modu_X}(D_X,M)&=\Ext^i_{D\Modu_Y}(f^!{D}_X,f^!{M})\\
    &=\Ext^i_{D\Modu_Y}(\Diff_{X}(\mathcal{O}_{Y},\mathcal{O}_X),f^!{M})\\
    &=\Ext^i_{\Diff(Y)\Modu}({D}_{X\leftarrow Y},\mathcal{O}_X),f^!{M})\\
    &=0,
\end{align*}
{where for the last line, we used that ${D}_{X\leftarrow Y}$ is projective in the good cuspidal case.} {And hence in the good cuspidal case, the functor $M\mapsto \RHom(D_X,M)$ from $D^b(\mathcal{A})$ to $D^b(\REnd(D_X)\Modu)$ is actually an abelian functor, \emph{i.e.}, it restricts to a functor of abelian categories from $D\Modu_X$ to End$(D_X)$-mod (which is $\Diff(X)$-mod by Theorem \ref{eti}). And our $D_X$ is mapped to $\Diff(X)$. }

\begin{remark}
It is shown in Remark 4.5 of \cite{Cusps_and_D-modules} that $f^!M = M \otimes_{{D}_X} {D}_{X\leftarrow Y}$ in the good cuspidal case. We can use this to calculate the corresponding $\Diff(X)$-module of our module $D_X$ under the equivalence of \cite{Cusps_and_D-modules} in this case. Let $i:X\to \mathbb{A}^n$ be the inclusion, the corresponding $\Diff(X)$ module is 
\begin{align*}
    i^!D_X&=I_X{D}_{\mathbb{A}^n}\backslash D_{\mathbb{A}^n}\otimes_{{D}_{\mathbb{A}^n}}{D}_{\mathbb{A}^n\hookleftarrow X}\\
    &=I_X{D}_{\mathbb{A}^n\hookleftarrow X}\backslash D_{\mathbb{A}^n\hookleftarrow X}\\
    &=\Diff_{\mathbb{A}^n}(\mathcal{O}_X,I_X)\backslash \Diff_{\mathbb{A}^n}(\mathcal{O}_X,\mathcal{O}_{\mathbb{A}^n})\\
    &=\Diff_{\mathbb{A}^n}(\mathcal{O}_X,\mathcal{O}_{X})\\
    &=\Diff_{X}(\mathcal{O}_X,\mathcal{O}_{X})\\
    &=\Diff(X)
\end{align*}

Since they both send the compact projective generator $D_X$ to the same compact projective generator $\Diff(X)$, our equivalence will reduce to the equivalence in \cite{Cusps_and_D-modules} in the cuspidal case.
\end{remark}

In summary, we have the following proposition:

\begin{proposition}
If $X$ has only cuspidal singularities, then $\REnd(D_X)\cong \End(D_X)\cong \Diff(X)$; furthermore, the functor $\REnd(D_X,-)$ in Proposition \ref{ourprop} is the derived functor of the abelian equivalence in Theorem \ref{bzn}.
\end{proposition}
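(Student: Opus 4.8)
The plan is to prove the two assertions in the proposition separately, reusing the computations already assembled in the cuspidal subsection. The first assertion, that $\REnd(D_X)\cong\End(D_X)\cong\Diff(X)$ as rings (with the cohomology concentrated in degree zero), is the claim that $D_X$ has no higher self-extensions. For this I would invoke the $\Ext$ computation carried out just above the proposition: since $f:Y\to X$ is a good cuspidal quotient morphism and $Y$ is smooth, the equivalence $f^!$ of Theorem \ref{bzn}(1) identifies $\Ext^i_{D\Modu_X}(D_X,M)$ with $\Ext^i_{\Diff(Y)\Modu}(D_{X\leftarrow Y},f^!M)$, and $D_{X\leftarrow Y}=\Diff_X(\mathcal{O}_Y,\mathcal{O}_X)$ is projective as a right $\Diff(Y)$-module by Theorem 4.3 of \cite{Cusps_and_D-modules}. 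Taking $M=D_X$ makes all $\Ext^i$ vanish for $i>0$, so $H^i(\REnd(D_X))=\Ext^i(D_X,D_X)=0$ for $i>0$; combined with Theorem \ref{eti} giving $H^0(\REnd(D_X))=\End(D_X)\cong\Diff(X)$, this shows $\REnd(D_X)$ is quasi-isomorphic to $\Diff(X)$ concentrated in degree zero.

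For the second assertion I would argue that the abstract derived equivalence $\RHom(D_X,-)$ of Proposition \ref{ourprop} agrees, after passing to hearts, with the abelian equivalence of Theorem \ref{bzn}. The key point is that once $\Diff(X)^{dg}:=\REnd(D_X)$ is concentrated in degree zero and equal to $\Diff(X)$, the target $D^b(\Diff(X)^{dg}\Modu)$ is literally the bounded derived category of ordinary $\Diff(X)$-modules, so $\RHom(D_X,-)$ is a genuine derived functor between bounded derived categories of abelian categories. I would then show it is $t$-exact, i.e.\ it sends $D\Modu_X$ into $\Diff(X)\Modu$ placed in degree zero with no higher cohomology; this is exactly the vanishing $\Ext^{>0}(D_X,M)=0$ for every $M\in D\Modu_X$ established in the displayed computation above (not just for $M=D_X$), so the functor restricts to an exact functor of abelian categories $M\mapsto\Hom(D_X,M)=\RHom^0(D_X,M)$.

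It then remains to match this restricted functor with the Ben-Zvi--Nevins functor of Theorem \ref{bzn}. Here I would use the Remark preceding the proposition, which computes $i^!D_X=\Diff(X)$: both functors send the compact projective generator $D_X$ to the free rank-one module $\Diff(X)$, and both are exact functors between abelian categories sending generators to generators. Since an exact colimit-preserving functor out of a Grothendieck category is determined up to natural isomorphism by its value on a projective generator together with the induced map on endomorphism rings, and since Theorem \ref{eti} identifies the two endomorphism-ring maps with the same canonical isomorphism $\End(D_X)\cong\Diff(X)$, the two functors agree. Thus $\RHom(D_X,-)$ is the derived functor of the abelian equivalence.

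The step I expect to be the main obstacle is the last one: carefully verifying that agreement on the generator $D_X$ together with compatibility of the $\End$-actions genuinely forces the two abelian functors to be naturally isomorphic, rather than merely agreeing on a single object. The cleanest way to make this rigorous is to observe that any cocontinuous exact functor from the Grothendieck category $D\Modu_X$ sending the projective generator $D_X$ to $\Diff(X)$ and acting on $\End(D_X)$ via $\phi$ of Theorem \ref{eti} must coincide with $\Hom(D_X,-)$, because on $D_X$ itself the functor is $\phi$-semilinearly the identity and both functors commute with the colimits presenting an arbitrary $M$ as a cokernel of a map between direct sums of copies of $D_X$. One must check that the Ben-Zvi--Nevins functor $f^!$ (equivalently $-\otimes_{D_X}D_{X\leftarrow Y}$) is indeed cocontinuous and exact and induces $\phi$ on endomorphisms, which follows from the projectivity and Morita statements of Theorem 4.3 of \cite{Cusps_and_D-modules} together with the identification $i^!D_X=\Diff(X)$ in the Remark above.
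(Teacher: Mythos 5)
Your proposal is correct and follows essentially the same route as the paper: the paper's own argument is precisely the displayed $\Ext$-vanishing computation via the good cuspidal quotient morphism and projectivity of $D_{X\leftarrow Y}$, combined with Theorem \ref{eti} for degree zero, and then the observation (using the Remark computing $i^!D_X=\Diff(X)$) that both equivalences send the compact projective generator $D_X$ to the compact projective generator $\Diff(X)$. Your extra care in the final step --- spelling out why agreement on the generator and on endomorphism rings, together with exactness and cocontinuity, forces the two abelian functors to coincide --- is a legitimate filling-in of a detail the paper leaves implicit, not a different method.
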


The $D$-module equivalence in Theorem \ref{bzn} (1) can be easily generalised to remove the good condition at the cost of getting a derived equivalence rather than an abelian equivalence. We are going to use this proposition in Section \ref{curve}.

\begin{proposition}\label{prop}
Suppose $f:Y\to X$ is a universal homeomorphism, then there is an (derived) equivalence between $D$-modules on $Y$ and $D$-modules on $X$.
\end{proposition}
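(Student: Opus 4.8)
The plan is to realise the equivalence through the adjoint pair $(f_*, f^!)$ of Kashiwara functors and to verify, on the level of the compact generators produced in Theorem \ref{ourtheo}, that this pair is mutually inverse. Since a universal homeomorphism is integral and of finite type, $f$ is finite; in particular I may work affine-locally and glue, so assume $X$ and $Y$ affine with a fixed closed embedding $X\hookrightarrow V$ into a smooth affine variety and the induced finite map $Y\to X\hookrightarrow V$. The functors $f_*$ and $f^!$ between $D\Modu_Y^{dg}$ and $D\Modu_X^{dg}$ are then given by derived tensor and $\RHom$ against the transfer bimodules, which in this singular setting are (the derived version of) the modules $\Diff_X(\mathcal{O}_Y,\mathcal{O}_X)$ that already appeared in the computation of $f^!D_X$ above, cf. Corollary 2.14 of \cite{Cusps_and_D-modules}. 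It then suffices to show that the counit $f_*f^!\to\Id$ and the unit $\Id\to f^!f_*$ are isomorphisms.

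The first step is a standard reduction to the generators. The functor $f_*$ is conservative because $f$ is finite and surjective, and $f^!$ carries the compact generator $D_X$ of $D\Modu_X^{dg}$ to a compact object of $D\Modu_Y^{dg}$; combined with conservativity of $f_*$ one checks via adjunction that $f^!D_X$ is again a compact generator. By Proposition \ref{ourprop} both categories are therefore equivalent to DG modules over the DG algebras $\REnd(D_X)$ and $\REnd(f^!D_X)$, and it is enough to identify these compatibly. By adjunction,
\begin{align*}
\REnd_{D\Modu_Y^{dg}}(f^!D_X) &= \RHom_{D\Modu_Y^{dg}}(f^!D_X, f^!D_X)\\
&= \RHom_{D\Modu_X^{dg}}(f_*f^!D_X, D_X),
\end{align*}
so full faithfulness of $f^!$ on the generator, and hence the whole statement, collapses to showing that the counit map $c:f_*f^!D_X\to D_X$ is an isomorphism of DG algebras; this is precisely the assertion that the transfer bimodule $\Diff_X(\mathcal{O}_Y,\mathcal{O}_X)$ is invertible in the derived Morita $2$-category.

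The crux is thus to prove $c:f_*f^!D_X\xrightarrow{\ \sim\ }D_X$. Since $X$ and $Y$ are reduced and $f$ is a bijective finite morphism inducing isomorphisms on residue fields, $f$ restricts to an isomorphism over a dense open $U\subseteq X$; over $U$ the counit is manifestly an isomorphism, so the cone of $c$ is a bounded complex whose cohomology is supported on the lower-dimensional closed set $Z=X\setminus U$. To kill this cone I would run a dévissage along the stratification of $X$ by the loci where $f$ fails to be an isomorphism: over each stratum the restriction of $f$ is again a universal homeomorphism, Kashiwara's theorem reduces the support-$Z$ contributions to $D$-modules on $Z$, and one concludes by induction on dimension. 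This is the derived analogue of the Morita argument of \cite{Cusps_and_D-modules}: in the good case the bimodule was honestly projective (so the equivalence was abelian and the counit was an isomorphism in degree zero), whereas dropping the Cohen--Macaulay and ``good'' hypotheses forces the higher $\mathrm{Tor}$/$\Ext$ terms to reappear, and the content is that they assemble into an isomorphism of complexes rather than obstructing it. The same argument applied on $Y$ gives that the unit $\Id\to f^!f_*$ is an isomorphism, yielding the equivalence.

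I expect the main obstacle to be exactly this last step: controlling the higher local-cohomology terms along $Z$ uniformly, without the projectivity that the good case supplied. This is also the point at which the full strength of the hypothesis is used — a mere nil-isomorphism or generic isomorphism would not suffice, and it is the universal-homeomorphism condition (a topological identification of $X$ and $Y$ compatible with all base changes) that makes the transfer bimodule derived-invertible and lets the dévissage close.
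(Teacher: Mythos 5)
Your overall architecture (Morita reduction to the counit on the compact generator, then d\'evissage) is salvageable, but as written it has two genuine gaps. First, the reduction step is not justified: you claim that conservativity of $f_*$ together with ``adjunction'' shows $f^!D_X$ is a compact generator of $D\Modu_Y^{dg}$. The only adjunction available here is $f_*\dashv f^!$ ($f$ is proper, so $f_*$ is the \emph{left} adjoint); to transfer a compact generator along a functor one needs that functor to be a left adjoint whose right adjoint is conservative and preserves coproducts, and $f^!$ sits on the wrong side of this adjunction. So neither generation nor compactness of $f^!D_X$ follows from what you wrote. (The skeleton can be repaired without this claim: if the counit is an isomorphism on $D_X$, continuity of $f_*$ and $f^!$ propagates this to the localizing subcategory generated by $D_X$, i.e.\ everywhere, so $f^!$ is fully faithful; then for any $N$, applying $f_*$ to the unit $N\to f^!f_*N$ and using the triangle identities shows $f_*(\text{cone})=0$, and conservativity of $f_*$ finishes. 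But this is not the argument you gave.)

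Second, and more seriously, the crux --- that the counit $f_*f^!D_X\to D_X$ is an isomorphism --- is asserted rather than proved; you yourself flag it as the main obstacle. The missing mechanism that would close your induction is proper base change along each closed stratum: for $i:Z\hookrightarrow X$ the non-isomorphism locus and $i':Z'=(Y\times_X Z)^{\text{red}}\hookrightarrow Y$, base change for the finite map $f$ gives $i^!f_*\cong f_{Z*}i'^!$, hence $i^!$ of the cone of the counit at any $N$ is the cone of the counit for $f_Z:Z'\to Z$ at $i^!N$; the inductive hypothesis (applied to \emph{all} objects on $Z$, not just $D_Z$) kills this, and since the cone is supported on $Z$, Kashiwara's theorem forces it to vanish. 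Without identifying this mechanism, the induction does not close, and your closing remark that the universal-homeomorphism condition ``makes the transfer bimodule derived-invertible'' is circular --- that invertibility is exactly what is to be proved. For comparison, the paper's proof avoids stratification altogether: it base-changes over $Y\times_X Y$ itself, where the hypothesis enters non-circularly through $(Y\times_X Y)^{\text{red}}=\Delta(Y)^{\text{red}}$, so Kashiwara's equivalence gives $f^!f_*\cong\Id$ in one stroke; it then proves that $f^!$ is conservative by a generic-point/support argument, and these two facts together yield the equivalence.
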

We will mimic the proof of \cite[Proposistion 3.14]{Cusps_and_D-modules}.
\begin{proof}
Consider the Cartesian diagram 

\begin{center}
\begin{tikzcd}
Y\times_X Y \arrow[r, "p_1"] \arrow[d, "p_2"] & Y \arrow[d, "f"] \\
Y \arrow[r, "f"]                              & X.               
\end{tikzcd} 
\end{center}
Since $f$ is proper (because $f$ is a universal homeomorphism, it is universally closed and of finite type, and we have assumed separatedness already), we can use proper base change: ${p_1}_* {p_2}^!={f}^!{f}_*$, see \cite[Proposition 5.4.2]{Gaitsgory_2013}. {Note that a $D\Modu$ on $Y\times_X Y$ is the same as a $D\Modu$ on $Y\times Y$ set-theoretically supported on the diagonal $\Delta:Y \to Y\times Y$. Indeed $(Y\times_X Y)^{\text{red}}=\Delta(Y)^{\text{red}}$. And the maps $p_2^!$ and $p_{1*}$ can be identified with the pullback and pushforward of $\pi:\Delta(Y)\to Y$. Therefore, by Kashiwara's equivalence, we have that ${p_1}_* {p_2}^!=\Id$ and hence by proper base change ${f}^!{f}_*=\Id$.}

To show that ${f}_*{f}^!=\Id$, note that we have a natural map $f_*f^!N\to N$, complete the cone, we have $f_*f^!N\to N\to M$, and thus we get $f^!f_*f^!N\to f^!N\to f^!M$, which is $f^!N\to f^!N\to f^!M$ by the above paragraph. Hence $f^!M=0$. Note that $f$ is surjective and dominant. Suppose that $M \neq 0$. Let $Z$ be an irreducible component of the (reduced) support of $M$. By passing to a smooth dense subset, we may assume that $Z$ is smooth. Let $z$ be the generic point of $Z$. Then $M_z$ is a nonzero vector space over the field $\mathcal{O}_{Z,z}$. By dominance, there is an irreducible component $Z'$ of {$Y$} such that for its generic point $z'$, the induced map $\mathcal{O}_{Z,z} \to \mathcal{O}_{Z',z'}$ is an injection (a field extension).  Therefore, $f^! M_{z}$ is given as:

$$O_{Z',z'} \otimes_{\mathcal{O}_{Z,z}} f^{-1}(M_{z}) [\dim X - \dim Y],$$
which is nonzero (and concentrated in degree $\dim X - \dim Y$), which is a contradiction.
\end{proof}

\begin{remark}
It follows from Theorem 4.3 and Remark 4.5 of \cite{Cusps_and_D-modules} that in the good cuspidal case, this is in fact an abelian equivalence. 
\end{remark}

We now recall \textit{seminormalisation}. If $X$ is a variety, then the seminormalisation $X^{sn}$ is the initial object in the category of universal homeomorphisms $Y\to X$. Note that by definition a curve is cuspidal if and only if its seminormalisation coincides with its normalisation. 

\begin{remark}
A variety $X$ is cuspidal if and only if $X^{sn}$ is smooth. Indeed, by definition $X^{sn}$ is smooth implies that $X$ is cuspidal. Conversely, if $Y \to X$ is a universal homeomorphism from a smooth variety $Y$, then by the universal property, there is a map $X^{sn} \to Y$, and since this is a finite birational map (as it factors through one), as $Y$ is normal (as it is smooth), Zariski's main theorem implies that $X^{sn}\cong Y$ and hence $X^{sn}$ is smooth.  
\end{remark}

We have the following corollary.

\begin{corollary}{\label{corollary2}}
There is an (derived) equivalence between $D$-modules on $X$ and $D$-modules on $X^{sn}$.
\end{corollary}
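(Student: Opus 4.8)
The plan is to derive Corollary \ref{corollary2} directly from Proposition \ref{prop} by applying it to the seminormalisation morphism. Recall that the seminormalisation $X^{sn}\to X$ is, by definition, a universal homeomorphism (it is the initial object in the category of universal homeomorphisms $Y\to X$, and in particular it is itself such a morphism). Thus the hypothesis of Proposition \ref{prop} is satisfied with $Y=X^{sn}$ and the given map $f:X^{sn}\to X$, and the proposition immediately yields a derived equivalence between $D$-modules on $X^{sn}$ and $D$-modules on $X$. This is essentially a one-line deduction once the seminormalisation map is identified as a universal homeomorphism.

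The main point to spell out is therefore just the verification that $f:X^{sn}\to X$ is a universal homeomorphism, so that Proposition \ref{prop} applies. First I would note that this is immediate from the definition of seminormalisation recalled just above the corollary: $X^{sn}$ is the initial object in the category of universal homeomorphisms over $X$, so the structure morphism $X^{sn}\to X$ is one of the objects of that category, hence a universal homeomorphism. No Cohen--Macaulay or goodness hypotheses are needed here, which is precisely why we pass through the more general Proposition \ref{prop} (giving a derived equivalence) rather than the abelian statement of Theorem \ref{bzn}.

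I would then remark on how this specialises to recover known results. When $X$ is cuspidal, the preceding remark shows $X^{sn}$ is smooth, so the corollary gives a derived equivalence between $D\Modu_X^{dg}$ and the derived category of $D$-modules on the smooth variety $X^{sn}$, which is simply $D^b(\Diff(X^{sn})\Modu)$ since $\Diff(X^{sn})$ has finite global dimension. In the good cuspidal case one moreover has, by the remark following Proposition \ref{prop} (citing Theorem 4.3 and Remark 4.5 of \cite{Cusps_and_D-modules}), that the equivalence is already abelian, recovering the statement that all definitions of $D$-modules coincide underived.

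There is essentially no obstacle in the deduction itself; the only subtlety worth flagging is that Proposition \ref{prop} produces a \emph{derived} equivalence in general, not an abelian one, so the corollary is correctly stated with the parenthetical ``(derived)'' rather than as an equivalence of abelian categories. The genuine content has already been carried out in Proposition \ref{prop} (the proper base change argument giving $f^!f_*=\Id$ and the generic-point argument giving $f_*f^!=\Id$), and all of that work rested only on $f$ being a universal homeomorphism, which is exactly the defining property of seminormalisation we invoke here.
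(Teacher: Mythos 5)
Your proposal is correct and matches the paper's (implicit) argument exactly: the corollary is deduced by applying Proposition \ref{prop} to the structure morphism $X^{sn}\to X$, which is a universal homeomorphism by the very definition of seminormalisation as the initial object in the category of universal homeomorphisms over $X$. Your additional remarks on the cuspidal and good cuspidal specialisations are consistent with the surrounding discussion in the paper and introduce no gaps.
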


\subsection{Vanishing Ext for $D$-modules}




For a general $X$, we have the following vanishing result:

\begin{proposition}\label{vanishingprop}
If $M$ is supported at a point then $\ESxt^i(D_X,M)=0$ for $i\geq 1$.
\end{proposition}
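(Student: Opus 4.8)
The plan is to reduce the statement to a local computation. Since $M$ is supported at a point $p$, Kashiwara's equivalence identifies $M$ with a $D$-module on the point, \emph{i.e.} a vector space placed in the appropriate degree, and the claim $\ESxt^i(D_X,M)=0$ for $i\geq 1$ is a statement that can be checked after completing at $p$ or, better, after reducing to the case where $M$ is the simple module $\delta_p$ (the delta-module supported at $p$), since any $M$ supported at the point is built from copies of $\delta_p$ by extensions and the functor $\ESxt^i(D_X,-)$ is cohomological. So first I would reduce to $M=\delta_p$.

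Next I would use the explicit resolution of $D_X$ produced in the proof of Theorem \ref{ourtheo}. Recall $D_X=\mathcal{O}_X\otimes_{\mathcal{O}_V}D_V$, and we have a finite projective resolution $P^\bullet\otimes_{\mathcal{O}_V}D_V$ of $D_X$ by projective $D_V$-modules, where $P^\bullet\to\mathcal{O}_X$ is a finite projective resolution over $\mathcal{O}_V$. Applying $\RSHom_{D_V}(-,\delta_p)$ to this resolution computes $\ESxt^\bullet(D_X,\delta_p)$, and since each term $P^i\otimes_{\mathcal{O}_V}D_V$ is a free (or projective) $D_V$-module, the complex $\RSHom_{D_V}(P^\bullet\otimes_{\mathcal{O}_V}D_V,\delta_p)$ reduces to $\Hom_{\mathcal{O}_V}(P^\bullet,\delta_p)$ — that is, to $\RSHom_{\mathcal{O}_V}(\mathcal{O}_X,\delta_p)$ with its residual $\mathcal{O}_V$-structure. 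Thus the key point becomes an $\mathcal{O}_V$-module (not $D_V$-module) Ext computation: I claim $\ESxt^i_{\mathcal{O}_V}(\mathcal{O}_X,\delta_p)$ vanishes for $i\geq 1$, or more precisely that the induced differential structure forces the higher cohomology to die.

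The cleanest route to the vanishing is the observation that $\delta_p$, as an $\mathcal{O}_V$-module, is an injective-like object in the relevant sense: it is supported at the single point $p$ and is a direct limit of $\mathcal{O}_V/\mathfrak{m}_p^n$-type pieces, and $\ESxt^i_{\mathcal{O}_V}(\mathcal{O}_X,-)$ against such local-cohomology-type modules can be computed using local duality. Concretely, since $p\in X$ and we may pass to the local ring $\mathcal{O}_{V,p}$, the group $\Ext^i_{\mathcal{O}_{V,p}}(\mathcal{O}_{X,p},\delta_p)$ is dual to a local homology / Tor group, and the point-support of $\delta_p$ together with the fact that $\mathcal{O}_X$ has a finite free resolution means the only surviving contribution is in the top degree, matching $i=0$ after the degree shift built into $\delta_p$. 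I expect the main obstacle to be bookkeeping the cohomological degree conventions: $\delta_p$ naturally sits in a nonzero homological degree under Kashiwara's equivalence, and I must check that the single surviving Ext group lands exactly in degree $0$ for $\ESxt$ of the $D$-modules (so that all $i\geq 1$ vanish) rather than being smeared across several degrees. Verifying that the differential-operator structure does not create spurious higher classes — \emph{i.e.} that passing from $\RSHom_{\mathcal{O}_V}$ back to $\RSHom_{D_V}$ genuinely collapses to the stated answer — is the delicate step, and I would handle it by tracking the computation through the explicit two-term adjunction $\Hom_{D_V}(N\otimes_{\mathcal{O}_V}D_V,-)=\Hom_{\mathcal{O}_V}(N,-)$ term by term in the resolution.
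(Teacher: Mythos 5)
Your first two steps are exactly the paper's: reduce to the delta module $\delta_p$ (the paper uses that $M$ is a direct sum of delta modules, plus compactness of $D_X$; your extension argument is fine for the same purpose), and then use the induction adjunction $\Hom_{D_V}(N\otimes_{\mathcal{O}_V}D_V,-)=\Hom_{\mathcal{O}_V}(N,-)$ applied to the resolution $P^\bullet\otimes_{\mathcal{O}_V}D_V$ to identify $\ESxt^i_{D_V}(D_X,\delta_p)$ with $\Ext^i_{\mathcal{O}_V}(\mathcal{O}_X,\delta_p)$. Up to that point you are reproducing the paper's proof.

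The gap is in your final step. You never commit to the one fact that finishes the argument: $\delta_p\cong\mathbb{C}[\partial_{x_1},\dots,\partial_{x_n}]$, viewed as an $\mathcal{O}_V$-module (after completing at $p$), is the injective hull of the residue field $\mathbb{C}$ over $\mathbb{C}[\![x_1,\dots,x_n]\!]$, hence an injective $\mathcal{O}$-module (this is the reference to Matsumura in the paper). Injectivity makes $\Hom_{\mathcal{O}_V}(-,\delta_p)$ exact, so $\Ext^{\geq 1}_{\mathcal{O}_V}(\mathcal{O}_X,\delta_p)=0$ immediately, with no duality and no degree bookkeeping. Your proposed route via local duality is essentially this fact in disguise (Matlis duality gives $\Ext^i_R(N,E)\cong\bigl(\text{Tor}_i^R(N,R)\bigr)^\vee$, which vanishes for $i\geq 1$, but the duality itself is proved from the exactness of $\Hom(-,E)$, \emph{i.e.}\ from injectivity), so as written it is circular unless you name the injectivity. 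Worse, your claim that ``the only surviving contribution is in the top degree, matching $i=0$ after the degree shift built into $\delta_p$'' is wrong: under Kashiwara's equivalence for the closed embedding of a point, $\delta_p$ is an honest $D$-module in cohomological degree $0$ (shifts of the form $[\dim X-\dim Y]$ occur for $!$-pullback, not for pushforward into the abelian category used here), and the surviving Ext group sits in degree $0$ from the start --- nothing is ``smeared'' and there is nothing to cancel. So the worry driving your last paragraph is a red herring, and the vanishing it is meant to address is never actually established; inserting the injectivity of $\delta_p$ repairs the proof and collapses it to the paper's.
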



\begin{proof}
If $M$ is supported at a point, then $M$ is just a direct sum of delta modules. {We can restrict to a local calculation in a formal neighbourhood $V$ of the origin and assume without loss of generality that the point is the origin and that} $M=\mathbb{C}[\partial_{x_1},\dots,\partial_{x_{n}}]$. As it is shown in \cite{matsumura1987}, it is the injective hull of $\mathbb{C}$ in $\mathbb{C}[\![{x_1},\dots,{x_n}]\!]$ and hence injective as a $\mathbb{C}[{x_1},\dots,{x_n}]$-module. Then by adjunction, $\Ext^i_{D_V}(D_X,M)\cong \Ext^i_{\mathcal{O}_V}(\mathcal{O}_X,M)$, {which is 0 as $M$ is injective.}\qedhere
\end{proof}

\begin{remark}
More generally, the injective dimension of $M$ as an $\mathcal{O}$-module is at most $\dim \supp M$ for a general $D$-module. See \cite{Injective_dimension_of_D-modules_a_characteristic-fre_approach} and \cite{Lyubeznik1993}. Therefore we get $\Ext^i(D_X, M) = 0$ for $i > \supp(M)$.
\end{remark}

\end{subsection}

\section{Calculation of cohomology in the hypersurface case}\label{Calculation_of_cohomology_in_the_hypersurface_case}

Now, we restrict to the case where we have a hypersurface $X$ that is a cut out by a single equation $f$ in $\mathbb{A}^n$. We wish to calculate the cohomology of RHom$(D_X,D_X)$ or more generally RHom$(D_X,M)$, where $M$ is a $D$-module supported on $X$. We will see that Ext$^i(D_X,M)$ will vanish for $i\geq 2$. And in the hypersurface case we can write down the formula for Ext$^1(D_X,M)$ easily once we have the correct derived category of $D$-modules supported on $X$. 

There is a free resolution $0\to D_{\mathbb{A}^n}\to D_{\mathbb{A}^n}\to D_X\to 0$, where the first map is applying multiplication by $f$ on the left and the second map is the quotient map. 

Then clearly we can replace the object $D_X$ with its free resolution $D_{\mathbb{A}^n}\xrightarrow{f\cdot} D_{\mathbb{A}^n}$. We have that  $\RHom_{D^b(\mathcal{A})}(D_X,M)$ is isomorphic to $$\RHom_{D^b_\mathcal{A}(D_V)}(D_{\mathbb{A}^n}\xrightarrow{f\cdot} D_{\mathbb{A}^n},M).$$ Since $D^b_\mathcal{A}(D_V)$ is defined as the full subcategory,
this is 
$$\RHom_{D^b(D_V)}(D_{\mathbb{A}^n}\xrightarrow{f\cdot} D_{\mathbb{A}^n},M).$$ The complex is then $M\to M$ where the map now is applying multiplication by $f$ on the right. Therefore, Ext$^0(D_X,M)=(M)^f$ and Ext$^1(D_X,M)=M/M\cdot f$. In particular Ext$^1(D_X,D_X)=\frac{D_{\mathbb{A}^n}}{D_{\mathbb{A}^n}\cdot f+f \cdot D_{\mathbb{A}^n}}$. Note this is only a vector space, no longer a $D_{\mathbb{A}^n}$-module. So, in particular we see that if $X$ is smooth, then $\frac{D_{\mathbb{A}^n}}{D_{\mathbb{A}^n}\cdot f+f \cdot D_{\mathbb{A}^n}}=0$, which is perhaps not obvious from direct computation. Note that Ext$^{\geq 2}(D_X,D_X)=0$.

Recall we have shown that Ext$^1(D_X,M)=0$ for cuspidal $X$. Then perhaps even more surprisingly, this shows for cuspidal singularities, $\frac{D_{\mathbb{A}^n}}{D_{\mathbb{A}^n}\cdot f+f \cdot D_{\mathbb{A}^n}}$ is also $0$, for which we can't find a pure algebraic proof. Furthermore, this shows our DG algebra does not `detect' all the singularities.

To summarise, we have the following:
\begin{formula} \label{formula}
In the case of $X$ is defined by a single equation $f$, we have that
\begin{enumerate}
    \item Hom$(D_X,M)=(M)^f$.
    \item Ext$^1(D_X,M)=M/Mf$.
    \item Ext$^{\geq 2}(D_X,M)=0$.
    \item When $X$ is either smooth or a cuspidal hypersurface (automatically CM) then $\frac{D_{\mathbb{A}^n}}{D_{\mathbb{A}^n}\cdot f+f \cdot D_{\mathbb{A}^n}}=0$.
\end{enumerate}
\end{formula}

Even though Ext$^1(D_X,D_X)$ is in general not a $\mathcal{D}_{\mathbb{A}^n}$-module, it is however still an RHom$(D_X,D_X)$ module, where now we have viewed it as a module over a DG algebra. In general Ext$^1(D_X,M)$ will be a module over Ext$^\bullet (D_X,D_X)$. An interesting question is when Ext$^1(D_X,M)$ is zero. In this case, RHom$(D_X,M)\cong \Hom (D_X,M)$ can be viewed as an ordinary module over $\Diff(X)$ (with possible higher $A_\infty$ structure). We will see later that some regular holonomic modules will have this property. 
\\

We can calculate the action explicitly:
\begin{align*}
   \text{REnd}(D_X)=&\text{RHom}(D_{\mathbb{A}^n}\to D_{\mathbb{A}^n},D_{\mathbb{A}^n}\to D_{\mathbb{A}^n})\\
   =&D_{\mathbb{A}^n}\to D_{\mathbb{A}^n}\oplus D_{\mathbb{A}^n}\to D_{\mathbb{A}^n},
\end{align*}
where the first arrow and second arrow are given explicitly by 
\begin{equation}{\label{1}}
    a\mapsto fa\oplus af,\alpha\oplus\beta\mapsto-f\beta+\alpha f
\end{equation}

Note this is quasi-isomorphic to 
\begin{center}

\begin{tikzcd}
D_{\mathbb{A}^n}\arrow[r]\arrow[d]& D_{\mathbb{A}^n}\oplus D_{\mathbb{A}^n}\arrow[r]\arrow[d]& D_{\mathbb{A}^n}\arrow[d]\\
0\arrow[r]& D_X\oplus 0\arrow[r,"-\cdot f"]&D_X,
\end{tikzcd}
\end{center}
where the vertical maps are either the zero map or the projection map. 

Multiplication is given by $$(a_1,b_1\oplus c_1,d_1)(a_2,b_2\oplus c_2,d_2)=(a_1c_2+b_1a_2,a_1d_2+b_1b_2\oplus c_1c_2+d_1a_2,c_1d_2+d_1b_2)$$ 
and its action on RHom$(D_X,M)=M\to M$ is given by 

\begin{equation}{\label{2}}
    (e,g)\cdot (a,b\oplus c,d)=(ga+eb,gc+ed)
\end{equation}

Now we can calculate the action of End$(D_X)=(D_X)^f$ on Hom$(D_X,M)=M^f$ and Ext$^1(D_X,M)=M/Mf$, via the action and the quasi-isomorphism described above. 


From (\ref{1}), we get the expression End$(D_X)=\frac{\{(\alpha,\beta)|\alpha f=f\beta\}}{\{f\cdot a\oplus a\cdot f| a\in D_{\mathbb{A}^n}\}}$.

End$(D_X)$ is a quotient of a subspace of $D_{\mathbb{A}^n}\oplus D_{\mathbb{A}^n}$ and Ext$^0(D_X,M)$ is a subspace of $M$, therefore the action is induced by (\ref{2}): $e\cdot (b\oplus c)=eb$. Note the isomorphism $\frac{(\alpha,\beta)|\alpha f=f\beta}{\{f\cdot a\oplus a\cdot f| a\in D_{\mathbb{A}^n}\}} \to (D_X)^f$ is given by projecting to the first coordinate and sending $\beta$ to zero. The inverse isomorphism is given by first lifting to an element $\alpha$ in $D_\mathbb{A}^n$ then solving the equation $\alpha f=f\beta$ for $\beta$ (the solution is unique since $f$ has no torsion), then sent to the quotient. The condition that multiplying on right by $f$ gives zero ensures that such a $\beta$ must exist. Since the action only depends on the first coordinate, we see that the action of End$(D_X)$ on Ext$^0(D_X,M)$ is the usual multiplication. 
 
On the other hand, the action on Ext$^1(D_X,M)$ is a bit more complicated; it has a \textit{twist}. Ext$^1(D_X,M)$ is a quotient of $M$ and the action is induced by $g\cdot(b\oplus c)=gc$. Therefore, the action is given by the $\beta$ after solving the equation in the previous paragraph. It is worthwhile to remark that $\beta$ has the same \textit{principal symbol} as $\alpha$, so after taking the associated graded module (with respect to either the additive or the geometric grading), the action is the same as without taking the twist. 

The action of Ext$^1(D_X,D_X)$ on Ext$^1(D_X,M)$ is induced by $e\cdot d=ed$. Upon identifying Ext$^1(D_X,D_X)$ with $D_X/D_Xf$ we see that the action is just the usual multiplication.

\begin{example}
We can compute $\Ext^1(D_X,D_X)$ explicitly when $X=\Spec\mathbb{C}[x,y]/(xy)$, the union of two axes in the plane. An explicit basis is given by $$P(\partial_x, \partial_y), y \partial_y P(\partial_x, \partial_y),$$ where $P$ is a monomial in $\partial_x, \partial_y$. We will use the Diamond Lemma, see \cite[Appendix A]{Schedler_2016}. Choose any ordering such that $x>y$. Note that Ext$^1(D_X,D_X)=D_{\mathbb{A}^2}/xyD_{\mathbb{A}^2}+D_{\mathbb{A}^2}xy$. 

The relations are the span of: 
\begin{enumerate}
    \item $xyg$, where $g$ is a monomial of $x,y,\partial_x,\partial_y$ (from $xyD_X$); 
    \item $xg$, where $\partial_x$ is not a factor of $g$;  
    \item $yg$, where $\partial_y$ is not a factor of $g$;
    \item and $(b x \partial_x + a y \partial_y) g + ab g$, for all $g$,  with  $a = 1+\ddeg_{\partial_x} g $ and $b = 1+\ddeg_{\partial_y} g$ (from $D_{\mathbb{A}^2}xy$ then subtract from $xyD_{\mathbb{A}^2}$).
\end{enumerate}

Using the last line allows us to get rid of all multiples of $x \partial_x$; in what remains, we can get rid of all multiples of $x$ by the second line.

But there are some redundancies: if we have $(xy \partial_x) g$, we get this to zero by the first line, or to $-(a+1)y((1/b)y \partial_y + 1)g$ by the last line.

This means that we can also get rid of multiples of $y^2 \partial_y$.  We can get rid of all $y$'s when not a multiple of $\partial_y$.

These are all the redundancies in applying the reductions $$xyg \to 0,$$ $$xg \to 0,$$ when $g$ is not a multiple of $\partial_x, yg \to 0$ when $g$ is not a multiple of $\partial_y$, and $$x\partial_x g \to -a((1/b)y \partial_y + 1)g.$$ So by the Diamond Lemma, we have found a basis consisting of the remaining expressions, which are of the form $P(\partial_x, \partial_y), y \partial_y P(\partial_x, \partial_y)$.

From this basis we see that if we look at the dimensions of the filtered pieces with respect to the additive filtration, the sequence of dimensions is a sum of shifted triangular numbers with one in deg 0, one in deg 2. Namely, the sequence will be 1,3,7,13,21,31...
\\

We can also compute the action of End($D_X$) on Ext$^1(D_X,D_X)$. Note that elements in End($D_X$) are $h\in D_X$ such that $hxy=0$ \textit{i.e.} $hxy=xyg$ for some $g\in D_{\mathbb{A}^2}$. Also note that $$x^iy^j\partial_x^n\partial_y^mxy=x^iy^j(xy\partial_x\partial_y+by\partial_y+mx\partial_x+nm)\partial_x^{n-1}\partial_y^{m-1}.$$ So $h$ has representatives 
\begin{enumerate}
    \item $x^iy^j\partial_x^n\partial_y^m$, where $i,j\geq 1$
    \item $y^j\partial_y^m$, where $j\geq 1$, and
    \item $y^i\partial_y^n$, where $i\geq 1$, 
\end{enumerate}
in $D_{\mathbb{A}^2}$. And as we noted before, the action is given by multiplication of the corresponding $x^{i-1}y^{j-1}(xy\partial_x\partial_y+ny\partial_y+mx\partial_x+nm)\partial_x^{n-1}\partial_y^{m-1}$ with the elements of Ext$^1(D_X,D_X)$. Note that the action is completely determined by the action of $x \partial_x^n$ and $y \partial_y^m$ since they generate End($D_X$). The corresponding elements are $x\partial_x^n+n\partial_x^{n-1}$ and $y\partial_y^m+m\partial_y^{m-1}$. So the action is $$x \partial_x^n\cdot \partial_x^i\partial_y^j=x\partial_x^{n+i}\partial_y^j+n\partial_x^{n-1+i}\partial_y^j=n\partial_x^{n-1+i}\partial_y^j$$
$$y \partial_y^m\cdot \partial_x^i\partial_y^j=y\partial_x^{i}\partial_y^{j+m}+m\partial_x^{i}\partial_y^{m-1+j}$$
$$x \partial_x^n\cdot y\partial_y\partial_x^i\partial_y^j=xy\partial_x^{n+i}\partial_y^{j+1}+ny\partial_x^{n-1+i}\partial_y^{j+1}=ny\partial_x^{n-1+i}\partial_y^{j+1}$$
$$y \partial_y^m\cdot y\partial_y\partial_x^i\partial_y^j=y^2\partial_x^{i}\partial_y^{j+m+1}+my\partial_x^{i}\partial_y^{m-2+j}.$$
\end{example}

Note that this example explicitly shows that Ext$^1(D_X,M)$ does not vanish in general. 

\section{Holonomic $D$-modules on curves}\label{curve}

In this section, we calculate $\ESxt^1(D_X,M)$ for $M$ a (regular) holonomic module on a curve $X$. In the general case, we show that if $M$ is simple and has nontrivial monodromies in the normalisations of the preimages of the non-cuspidal singularities then $\ESxt^1(D_X,M)$ vanishes, and we conjecture the converse direction is also true for simple $M$. We prove the conjecture in the case of $X$ is a planar multicross singularity. We also explain why all curves have category of $D$-modules derived equivalent to a curve with planar multicross singularities.

We can calculate the stalk of $\ESxt^1(D_X,M)$ in formal neighbourhoods. But the formal neighbourhood can be computed in an analytic neighbourhood. If the point $p$ is a smooth or a cuspidal point, then by {Formula \ref{formula},} we have concluded that the $\ESxt^1(D_X,M)_p=0$. Therefore, the sheaf is concentrated at the non-cuspidal singular points. The singular points are isolated, so we see we only need to do the calculation locally around each non-cuspidal singular point, and $\ESxt^1(D_X,M)$ must be a direct sum of skyscraper sheaves. Thus assume without loss of generality that the curve is affine.

\subsection{General curve}
Let $j:U\to X$ is an open embedding with $U$ smooth (but possibly not affine).
Recall Corollary \ref{corollary2} about the equivalence between the categories of $D$-modules on $X$ and its seminormalisation $X^{sn}$.

\textbf{Observation}: Observe that $\RSHom(D_X, j_* N) = \RSHom(D_U, N) = \Gamma_U(N)$. In particular, $\ESxt^{> 0}(D_X, j_* N) = 0$ for all $D$-modules $N$ on $U$.

\begin{definition}
We call an intermediate extension \textit{clean} if the canonical morphism $\IC(N)\hookrightarrow H^0 j_*N$ is an isomorphism. 
\end{definition}

In particular if $M=\IC(N)=H^0j_*N$ is clean, then by the above observation, Ext$^1(D_X,M)=0$.

Note that for $N$ indecomposable, then $\IC(N)$ is {an} indecomposable module if and only if $H^0 j_! N$ (and $H^0 j_* N$) are isomorphic to $\IC(N)$. This is implied by cleanness, and equivalent to it when $j$ is affine. See \cite[Theorem 3.4.2, (1)]{D-modules_perverse_sheaves_and_representation_theory}.

An example of a clean extension from $\mathbb{G}_m$ to $\mathbb{A}^1$ is {$N=(d-\lambda/x)D_{\mathbb{G}_m}\backslash D_{\mathbb{G}_m}$ for $\lambda \not \in \mathbb{Z}$} because $j_*N$ is simple. An example of an unclean extension is $\IC(\Omega_{\mathbb{G}_m})=\Omega_{\mathbb{A}^1}$ because $j_*\Omega_{\mathbb{G}_m}=\mathbb{C}[t,t^{-1}]$.

For $n$-lines intersection in the plane this is more complicated. Recall that the pushforward functor for singular varieties is inherited from the pushforward functor for the ambient smooth varieties. Let $i\circ j$ be the inclusion map from $U$ to $\mathbb{A}^2$, where $j$ is the open embedding from $U=\mathbb{A}^1\backslash \{0\}\to\mathbb{A}^1$ and $i$ is the closed embedding from $\mathbb{A}^1\to \mathbb{A}^2$. So $(i\circ j)_* N=i_{\bullet} (j_* N\otimes_{D_{\mathbb{A}^1}} D_{{\mathbb{A}^1}\to{\mathbb{A}^2}})$, which is $j_*N\otimes_{\mathbb{C}}\mathbb{C}[\partial_y]$. Since $\delta_0$ is simple as a $D_{\mathbb{A}^1}$ module, if $j_*N$ is simple, then we will have that $j_*N\otimes_{\mathbb{C}}\mathbb{C}[\partial_y]$ is a simple $D_{\mathbb{A}^1}\otimes D_{\mathbb{A}^1}\cong D_{\mathbb{A}^2}$ module. Then the intermediate extension must be the pushforward. {But indeed when $\lambda \in \mathbb{Z}$,  $j_*N\cong\mathbb{C}[t,t^{-1}]$, and for $\lambda \not\in \mathbb{Z}$, $j_*N\cong(xd-\lambda)D_{\mathbb{A}^1}\backslash D_{\mathbb{A}^1}$. And the simple submodules are $\mathbb{C}[t]$ and $(xd-\lambda)D_{\mathbb{A}^1}\backslash D_{\mathbb{A}^1}$ respectively. We see that $(d-\lambda/x)D_{\mathbb{G}_m}\backslash D_{\mathbb{G}_m}$ is still clean but $\IC(\Omega_{\mathbb{G}_m})$ is not.}

Note that by the above discussion, cuspidal quotient morphisms preserve indecomposable objects as well as isomorphism classes of objects. So, the property of having a clean extension is preserved under cuspidal quotient morphisms.  Since we can detect having a clean extension locally, we see that a simple holonomic $D$-module has a clean extension if and only if, pulling back to the seminormalisation, all monodromies around all preimages of the singularities are nontrivial.  This in turn is equivalent to asking that, in the normalisation, all monodromies around all preimages of singularities are nontrivial.  

\begin{definition}
We say $M$ has \textit{completely non-trivial monodromy} if for every composition factor $L$ of $M$ which is not supported at a point, the monodromy of $L$ about the preimage under the normalisation map $\nu:\tilde{X}\to X$ of every non-cuspidal singularity of $X$ does not have 1 as an eigenvalue. 
\end{definition}

We have deduced the following: 

\begin{theorem}{\label{theo}}
Let $X$ be a curve with the normalisation $\nu: \tilde{X} \to X$ and $M$ be a regular holonomic $D$-module on $X$ with completely non-trivial monodromy. Then $\Ext^{\geq 1}(D_X, M) = 0$.
\end{theorem}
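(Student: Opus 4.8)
The plan is to combine a dévissage along a composition series of $M$ with the local cleanness criterion established just before the statement, after first disposing of the higher Ext groups for free. First I would observe that $\Ext^{\geq 2}(D_X,M)=0$ is automatic: by the remark following Proposition \ref{vanishingprop}, the injective dimension of $M$ as an $\mathcal{O}$-module is at most $\dim\supp M\leq 1$, so the adjunction $\Ext^i_{D_V}(D_X,M)\cong\Ext^i_{\mathcal{O}_V}(\mathcal{O}_X,M)$ forces vanishing for $i\geq 2$. Thus the entire content is to prove $\Ext^1(D_X,M)=0$. Next I would localise: as explained before the statement, the sheaf $\ESxt^1(D_X,M)$ has vanishing stalk at every smooth and cuspidal point by Formula \ref{formula}, hence it is a finite direct sum of skyscrapers supported at the non-cuspidal singularities, and on the (affine, after the reduction) curve $X$ the global $\Ext^1$ is just $\Gamma\big(\ESxt^1(D_X,M)\big)$. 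So it suffices to show the stalk vanishes in a formal or analytic neighbourhood of each non-cuspidal singular point.

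The next step is dévissage. Since $M$ is holonomic it has a finite composition series, and for any short exact sequence $0\to M'\to M\to M''\to 0$ the functor $\Ext^\bullet(D_X,-)$ gives a long exact sequence $\cdots\to\Ext^1(D_X,M')\to\Ext^1(D_X,M)\to\Ext^1(D_X,M'')\to\cdots$, so $\Ext^1(D_X,M')=\Ext^1(D_X,M'')=0$ forces $\Ext^1(D_X,M)=0$. By induction on the length it is enough to treat a single simple composition factor $L$. If $L$ is supported at a point, then Proposition \ref{vanishingprop} gives $\ESxt^{\geq 1}(D_X,L)=0$ immediately. Otherwise $L$ is the intermediate extension $\IC(L^\circ)$ of a simple local system $L^\circ$ on a smooth dense open $j\colon U\hookrightarrow X$, and the hypothesis of completely non-trivial monodromy says precisely that, after pulling $L^\circ$ back along the normalisation $\nu\colon\tilde X\to X$, the monodromy about every preimage of a non-cuspidal singularity has no eigenvalue equal to $1$.

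Finally I would invoke the cleanness criterion established before the theorem: a simple, non-point holonomic module has a \emph{clean} intermediate extension if and only if all monodromies about the normalisation-preimages of the singularities are nontrivial. Hence such an $L$ satisfies $L=\IC(L^\circ)=H^0 j_* L^\circ$ cleanly, and the Observation that $\RSHom(D_X,j_*N)=\Gamma_U(N)$ yields $\ESxt^{>0}(D_X,L)=0$. Combining this with the dévissage completes the proof that $\Ext^1(D_X,M)=0$, and together with the first step this gives $\Ext^{\geq 1}(D_X,M)=0$.

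The hard part is not the dévissage, which is routine, but rather the cleanness-versus-monodromy equivalence underlying the last step, and this is exactly where regular holonomicity and the structure of seminormal curve singularities enter. One must match the monodromy about a branch in the normalisation $\tilde X$ with the corresponding local monodromy on the seminormalisation $X^{sn}$ (whose singularities are ordinary multiple points), and understand the intermediate extension at such a point; this is the technical heart worked out in the paragraphs preceding the statement. Within the proof proper, the only genuine bookkeeping is to make sure the hypothesis, which is phrased on the composition factors of $M$, transfers correctly to each non-point simple factor $\IC(L^\circ)$ so that the cleanness criterion applies verbatim.
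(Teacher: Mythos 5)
Your proposal is correct and follows essentially the same route as the paper: reduce to a local computation at the non-cuspidal singular points, d\'evissage along the composition series (the hypothesis being phrased on composition factors makes this automatic), handle point-supported factors by Proposition \ref{vanishingprop}, and handle the remaining simple factors via the cleanness criterion together with the Observation that $\ESxt^{>0}(D_X, j_*N)=0$. The only cosmetic difference is your separate treatment of $\Ext^{\geq 2}$ via the injective-dimension remark, which is harmless but redundant, since the cleanness/point-support arguments already kill all $\Ext^{\geq 1}$ at once.
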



Therefore the abelian subcategory of regular holonomic $D$-modules with completely non-trivial monodromy over a curve maps to ordinary modules over $\Diff(X)$ (possibly with $A_\infty$ structure). These are not the only $D$-modules mapping to ordinary modules over $\Diff(X)$, e.g., $j_* \Omega$ maps to ordinary modules over $\Diff(X)$, but the latter doesn't live in the abelian subcategory, as $\Ker(j_*M\to \delta^n)=\IC(X)$ does not map to an ordinary $D$-module.

\begin{remark}
We also expect the converse of the previous theorem to be true for $M$ simple, \emph{i.e.}, $\Ext^{\geq 1}(D_X, M) = 0$ with $M$ holonomic implies that $M$ has completely non-trivial monodromy. However, if $f:W \to X$ is the map from $W=$ $n$-lines intersection at the origin to $X$, since the $f^! D_X$ is not $D_W$, the local calculation doesn't go through. The module $f^! D_X$ should look like ${D}_{X\leftarrow W}$, but since $X$ is not cuspidal, we cant apply the machinery from \cite{Cusps_and_D-modules}. For a general cuspidal quotient morphism $f: X\to Y$, the vanishing of Ext$^{>0}(D_X, f^! M)$ does not appear to imply Ext$^{>0}(D_Y, M)$ vanishing.  (However, as discussed above, if $X$ is smooth, or more generally $Y$ is cuspidal, then Ext$^{>0}(D_Y, M) = 0$ for all $D$-modules $M$ on $Y$.) Note that $f^! D_X$ is not projective because if it is then Ext$^1(D_X,M)=0$ for $X$ $n$-straight line intersecting at the origin and $M$ the trivial module. 
\end{remark}

\subsection{Planar multicross singularity case}

We take formal neighbourhood $U$ at the singular points {of a curve $X$}, and we take its normalisation map $\Tilde{U}\twoheadrightarrow U$. $\Tilde{U}$ is a normal 1 dimensional, therefore it is smooth. But a regular local complete 1 dimensional ring must be of the form $\mathbb{C}[[x]]$. So, each connected component of $\Tilde{U}$ must be of this form. The normalisation map will factor through the bijective map $Z:=\Spec\mathbb{C}[[x]]\times_\mathbb{C}\dots\times_\mathbb{C} \mathbb{C}[[x]]\to U$, where $\mathbb{C}[[x]]\times_\mathbb{C}\dots\times_\mathbb{C} \mathbb{C}[[x]]$ is the fibered coproduct and it is the coordinate ring of the $n$ axes in $\mathbb{A}^n$. This map is a universal homeomorphism and the source is the seminormalisation $U^{sn}$. Therefore, a seminormal curve is precisely
one whose singularities formally locally look like coordinate axes in an affine space. This is known as \textit{multicross singularity} in the literature, see \cite{leahy1981}. 

There is another map from the $n$ axes in $\mathbb{A}^n$ {(which we named $Z$)} to $n$ distinct (straight) lines {in the plane $\mathbb{A}^2$ (which we are going to call $W$)} (for example, via the matrix \[\begin{pmatrix} 1&1&1&\dots&1&1&0\\ 0&1&2&\dots&n-3&n-2&1
\end{pmatrix}.)\] By Proposition \ref{prop}, we get another derived equivalence of $D$-modules. So, we see that locally there is an equivalence between $D$-modules on $X$ and $D$-modules on {$W$}. Therefore, up to derived equivalence, we can understand the Ext-algebra locally (as demonstrated below).

\begin{remark}
We stress that these equivalences are not isomorphisms of (DG) rings.
\end{remark}

\begin{remark}\label{cal}
We can explicitly write down a concrete basis of Ext$^1(D_W,M)$ in the case of $\IC(\Omega_{\mathbb{G}_m})$ and $\IC((d-\lambda/x)D_{\mathbb{A}^1}\backslash D_{\mathbb{A}^1})$, when $W$ is $n$-lines intersecting at the origin. 
Recall that we had the formula Ext$^1(D_W,M)=M/Mf$, {where $f$ is the defining equation of $W$ in $\mathbb{A}^2$}.

For $N=\Omega_{\mathbb{G}_m}$, we see $\IC(N)=\mathbb{C}[x]\otimes_\mathbb{C} \mathbb{C}[\partial_y]$, viewed as a right module. The multiplication on the right is done component-wise, with the standard action on the $x$-part and $\partial_y^p\cdot y=p\partial_y^{p-1}$. So, applying multiplication by $x$ increases the exponent in the first coordinate and applying multiplication by $y$ decreases the exponent in the second coordinate. Say $f=y(x+\alpha_1 y)\cdots(x+\alpha_{n-1} y)$, $\alpha_i\in \mathbb{C}$ distinct. To calculate Ext$^1(D_X,M)$, we need to calculate $Mf$, the image. Clearly applying multiplication by $y$ is a surjection, and applying multiplication by $x+\alpha_iy$ we get $\IC(N)$ except a copy of $\mathbb{C}\otimes_\mathbb{C}\mathbb{C}[\partial_y]$. Continue like this, we see that Ext$^1(D_X,M)$ has a basis $\langle 1,x,\dots ,x^{n-2}\rangle\otimes_\mathbb{C}\mathbb{C}[\partial_y]$, which is non-zero if $n>1$ (\textit{i.e.} it is not cuspidal).

For $N=(d-\lambda/x)D_{\mathbb{G}_m}\backslash D_{\mathbb{G}_m}$, $\IC(N)=(xd-\lambda)D_{\mathbb{A}^1}\backslash D_{\mathbb{A}^1}\otimes_{\mathbb{C}}\mathbb{C}[\partial_y]$, which equals $\mathbb{C}[x^{\lambda}]$ as a vector space where $\mathbb{C}[x^{\lambda}]$ is a power series infinite in both directions. Therefore, applying multiplication by $x$ and $y$ is surjective. Therefore, $\Ext^1(D_X,M)= 0$.


\end{remark}

Recall that the support is a closed subset, and that {$M$ is simple implies the support $M$ is irreducible}. The irreducible closed subsets of $n$-lines intersecting at the origin are either a point or a copy of $\mathbb{A}^1$. Therefore, $\IC(\Omega_{\mathbb{G}_m})$, $\IC((d-\lambda/x)D_{\mathbb{G}_m}\backslash D_{\mathbb{G}_m})$ {and delta modules} are the only possible simple regular holonomic modules on $\mathbb{A}^1$. 

Therefore, we have the following:

\begin{proposition}
Let $X$ be a curve such that $X$ is formally locally equivalent to $n$-lines intersecting in a plane for each non-cuspidal point. If $M$ is a simple regular holonomic $D$-module on $X$, then Ext$^1(D_X,M)=0$ if and only if $M$ has non-trivial monodromy about each non-cuspidal singularity or $M$ is supported at a point.
\end{proposition}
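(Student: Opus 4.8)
The plan is to reduce the proposition to the purely local computations already carried out in Remark~\ref{cal}, and to handle the point-supported and general-support cases by the vanishing and locality results established earlier. First I would record that, by the discussion at the start of Section~\ref{curve}, the sheaf $\ESxt^1(D_X,M)$ is a direct sum of skyscraper sheaves concentrated at the non-cuspidal singular points; at smooth and cuspidal points it vanishes by Formula~\ref{formula}. Hence $\Ext^1(D_X,M)=0$ if and only if the stalk vanishes at each non-cuspidal singularity, and the stalk can be computed in a formal neighbourhood. At such a point the hypothesis says $X$ is formally locally a copy of $n$ coordinate axes, so by Proposition~\ref{prop} (applied to the universal homeomorphism from the $n$ axes in $\mathbb{A}^n$ to the $n$ lines $W\subset\mathbb{A}^2$) the local category of $D$-modules is derived equivalent to $D$-modules on $W$, and $\Ext^1$ is computed there via the hypersurface formula $M/Mf$.

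Next I would classify the simple regular holonomic modules contributing to the stalk. Since $M$ is simple its support is irreducible, so in the $n$-lines picture the local composition factor is supported either at the origin (a delta module) or along one of the branches, i.e.\ it is $\IC(\Omega_{\mathbb{G}_m})$ (trivial monodromy) or $\IC((d-\lambda/x)D_{\mathbb{G}_m}\backslash D_{\mathbb{G}_m})$ with $\lambda\notin\mathbb{Z}$ (nontrivial monodromy) pushed forward to $\mathbb{A}^2$. For delta modules, Proposition~\ref{vanishingprop} gives $\ESxt^{\geq 1}(D_X,M)=0$ directly, covering the point-supported case. For the two branch cases, Remark~\ref{cal} computes $M/Mf$ explicitly: it is nonzero (with basis $\langle 1,x,\dots,x^{n-2}\rangle\otimes_{\mathbb{C}}\mathbb{C}[\partial_y]$, provided $n>1$) in the trivial-monodromy case, and it vanishes in the nontrivial-monodromy case because multiplication by both $x$ and $y$ is surjective on $\mathbb{C}[x^{\lambda}]$. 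Assembling these, the stalk at a non-cuspidal singularity vanishes precisely when the local factor has nontrivial monodromy (or is point-supported), which is exactly the stated condition.

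The one subtle point I would be careful about is the passage between the simple module $M$ on $X$ and its local composition factors in the formal/analytic neighbourhood $W$. I need the branch-by-branch monodromy of $M$ (defined on the normalisation $\tilde X$) to match the monodromy of the corresponding local factor on $W$, and I need the monodromy condition to be detected one branch at a time so that ``nontrivial monodromy about the singularity'' translates into the surjectivity of multiplication by each factor $(x+\alpha_i y)$ of $f$. This compatibility is exactly what the discussion preceding Definition of \emph{completely non-trivial monodromy} provides: having a clean extension is a local, monodromy-detected property preserved under cuspidal quotient morphisms, and under the derived equivalence to $W$ the monodromy of a branch is preserved. The main obstacle is therefore bookkeeping rather than conceptual: one must ensure the local factor extracted in the $W$-model really is one of the two $\IC$'s above (using that simplicity forces irreducible support and that $j_*N$ is simple exactly when $\lambda\notin\mathbb{Z}$), and then invoke the explicit computations of Remark~\ref{cal} branch by branch. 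Once that identification is in place, the equivalence ``vanishing $\Leftrightarrow$ nontrivial monodromy or point support'' follows immediately from the two displayed computations.
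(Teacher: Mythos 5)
Your proposal is correct and follows essentially the same route as the paper: reduce to the formal local model of $n$ lines in the plane via the universal-homeomorphism derived equivalence (Proposition~\ref{prop}), classify the simple regular holonomic modules there, and conclude from the explicit computations of Remark~\ref{cal} together with Proposition~\ref{vanishingprop} for point-supported modules. The paper's proof is just a terse citation of that same discussion, so your write-up simply makes explicit the bookkeeping (branch-by-branch monodromy identification) that the paper leaves implicit.
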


\begin{proof}
As we discussed before, it is enough to consider formally locally. Then the calculation is done in the previous remark and Proposition \ref{vanishingprop}. 
\end{proof}

The examples include the nodal curve $\mathbb{C}[x(x-1),x^2
(x-1)] \subset \mathbb{C}[x]$; in this case $X=X^{sn}$, and the induced map from the intersection of 2 lines in a plane to $X$ is formally locally an isomorphism. In fact, every planar-multicross must be formally locally of this form. It does not include examples like formally locally non-transverse intersections. 

\begin{remark}
More generally, given a global curve $X$, one can always construct another curve $X^{pl}$, whose singularities are all formally locally lines intersecting in a plane, and whose category of $D$-modules is derived equivalent to that of $X$. Hence up to derived equivalence we can be in the situation of Proposition 4.3. Indeed, at each singular point $p_i\in X^{sn}$ and $U_i$ such that $p_i\in U_i$, we have $\mathcal{O}(\widehat{{(U_i)}_{p_i}})\cong \mathbb{C} [[x_{i,1}]]\oplus \dots\oplus \mathbb{C}[[x_{i,n_{i}}]]$. Then by Nakayama's lemma we pick $U_i$ small enough such that $\exists y_{i,1},\dots,y_{i,{n_i}}\subset\mathcal{O}(U_i)$ generate $\mathcal{O}(U_i)$ with $y_{i,j}\equiv x_{i.j}$ (mod $\mathfrak{m_i}^2$). This is true because the statement is true in $\mathcal{O}(U_i)_p$ and we can pick $U_i$ small enough that $y_{i,j}$ extend to $U_i$ and generate. Define $X^{pl}:=(X^{sn},\mathcal{O}^{pl})$, where $\mathcal{O}^{pl}\subset\mathcal{O}^{sn}$ is a sheaf of rings, defined by $f|_{U_i}\in\mathcal{O}^{pl}(U_i)$ if $f|_{U_i}\in \mathbb{C} \langle y_{i,1}+\dots+y_{i,n_{i}-1}, y_{i,2}+\dots+(n_i-2)y_{i,{n_{i}-1}}+y_{i,n_i}\rangle\subset \mathcal{O}(U_i)^{sn}$ for all $i$, the subalgebras generated by these two elements (we are using the matrix at the beginning of this section); along with the open set covering the smooth part, this satisfies the sheaf condition. We need to check that $U_i^{sn}\backslash \{p_i\}\cong U_i^{pl}\backslash \{p_i\}$, \textit{i.e.} $\mathcal{O}(U_i\backslash \{p_i\})$ is generated by $y_{i,1}+\dots+y_{i,n_{i}-1}, y_{i,2}+\dots+(n_i-2)y_{i,{n_{i}-1}}+y_{i,n_i}$ And there is a universal homeomorphism from $X^{sn}$ to $X^{pl}$. This is true because the singular locus of $U_i$ is just $p_i$ and $\forall q\in U_i\backslash \{p_i\}$, $\exists i$ such that $y'_{i,j}(q)\neq 0$, then by Nakajima's lemma every local neighbourhood in $U_i$ is generated by $y_{i,j}$, so $\mathcal{O}(U_i\backslash \{p_i\})$ is also generated by them. $X^{pl}$ is formally locally $n$ curves meeting transversely in a plane, Zariski locally some of these curves could coincide. Therefore there is a zigzag diagram
\begin{center}
    \begin{tikzcd}
  & X^{sn} \arrow[ld, two heads] \arrow[rd, two heads] &        \\
X &                                                    & X^{pl},
\end{tikzcd}\\
\end{center}

where each arrow is a universal homeomorphism and thus gives a derived equivalence of $D$-modules and the category is `nicer' to study in $X^{pl}$. 
\end{remark}

\section{Holonomic $D$-module on isolated quotient singularities}\label{quotient}

In this section, we calculate the image of holonomic $D$-modules of a smooth variety $V$ under our canonical map in Proposition \ref{ourprop} in the situation of isolated quotient singularities by a finite group $G$. {Let $X=V/G$ be the quotient of a variety $V$ by $G$,} we need to calculate $\Ext^{>0}(D_X,M)$. As usual, it suffices to compute in a formal neighbourhood of an isolated singularity. Therefore we can assume $V$ is the affine space $\mathbb{A}^n$. In particular, we will see that intermediate extension of the canonical $D$-module can be viewed as an ordinary module over the Grothendieck ring of differential operators $\Diff(V/G)$. And the intermediate extension of nontrivial local systems $L$ on $U = (V/G) \backslash\{0\}$ where $G$ acts freely away from the origin, have non-vanishing $\Ext^{\dim V-1}(D_X, IC(L))$. This is completely opposite to the curve case.

Let $\pi:V \to V/G$ be the quotient map. The strategy is the following: we first study the fundamental exact triangle associated to the intersection cohomology $D$-module and compute its cone just as we did in Section \ref{curve} (cf. Lemma \ref{5.2}). Using this, we can compute $\pi^!\IC(N)$ (cf. Proposition \ref{5.4}). Finally, using equivariant-adjunction, we can compute $\Ext^i(D_X,\IC(N))$ (cf. Theorem \ref{5.9}).
The results can be globalised, (cf. Remark \ref{globalremark}).\\

Recall that a pseudo-reflection is an invertible linear transformation $g:V\to V$ such that the order of $g$ is finite and the fixed subspace $ V^{g}$ has codimension 1. The Chevalley--Shephard--Todd theorem says:

\begin{theorem}\label{CST}
For $x\in V=\mathbb{A}^n$, let $G_x\subset G$ be the stabilizer of $x$. Then the quotient $V/G$ is smooth if and only if each $G_x$ is generated by pseudo-reflections.
\end{theorem}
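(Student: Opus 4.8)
The plan is to reduce this geometric statement to the classical (graded) Chevalley--Shephard--Todd theorem, which asserts that for a finite group $H$ acting linearly on $V$, the invariant ring $\mathbb{C}[V]^H$ is a polynomial ring if and only if $H$ is generated by pseudo-reflections. I would cite this algebraic statement rather than reprove it, since it is the genuinely deep input; the geometric content of Theorem \ref{CST} is then purely local, and the work lies in transferring between the two formulations.

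First I would observe that smoothness of $V/G$ can be checked at each closed point, and that the closed points of $V/G$ are precisely the $G$-orbits $Gx\subset V$. Thus it suffices to prove, for each $x$, that $V/G$ is smooth at $\bar x=\pi(x)$ if and only if $G_x$ is generated by pseudo-reflections; this localises the problem at a single orbit. Second, and this is the main step, I would identify the completed local ring of the quotient at $\bar x$ with the stabiliser invariants. Writing $A=\mathbb{C}[V]$, $B=A^G$ and $\mathfrak q\subset B$ for the maximal ideal of $\bar x$, the orbit $Gx=\{x_1,\dots,x_k\}$ gives the primes of $A$ over $\mathfrak q$, and completion decomposes as $A\otimes_B\widehat{B_{\mathfrak q}}\cong\prod_i\widehat{A_{\mathfrak p_i}}$, with $G$ permuting the factors transitively and $G_x$ stabilising the factor at $x$. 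Since $|G|$ is invertible (we are in characteristic zero), taking $G$-invariants is exact and commutes with completion, yielding
\[
\widehat{\mathcal O}_{V/G,\bar x}\;=\;\widehat{B_{\mathfrak q}}\;\cong\;\Bigl(\prod_i\widehat{A_{\mathfrak p_i}}\Bigr)^{G}\;\cong\;\bigl(\widehat{A_{\mathfrak p_x}}\bigr)^{G_x}\;=\;\mathbb{C}[[V]]^{G_x},
\]
exactly as in the decomposition/inertia-group computation familiar from the \'etale-slice picture (alternatively one may invoke Luna's slice theorem directly). Because $G_x$ fixes $x$ and acts linearly on $V$, its action on $\widehat{A_{\mathfrak p_x}}\cong\mathbb{C}[[t_1,\dots,t_n]]$ in coordinates centred at $x$ is the linear action on $T_xV\cong V|_{G_x}$, so these invariants are the completion of the graded ring $\mathbb{C}[V]^{G_x}$.

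Finally I would compare the graded and complete pictures: a finitely generated graded $\mathbb{C}$-algebra is a polynomial ring if and only if its completion at the irrelevant ideal is a regular local ring. Hence $V/G$ is smooth at $\bar x$ if and only if $\mathbb{C}[V]^{G_x}$ is a polynomial ring, and by the classical Chevalley--Shephard--Todd theorem this holds if and only if $G_x$ is generated by pseudo-reflections; both implications of the theorem follow at once. The main obstacle is the second step: one must correctly pass from the $G$-invariants near the whole orbit to the $G_x$-invariants near the single point $x$, since $x$ is in general not fixed by all of $G$. The transitive permutation of the completed factors by $G$, together with the stabiliser being exactly the decomposition group (the residue field extension is trivial, as all points are $\mathbb{C}$-rational), is what makes this work, and the remaining deep ingredient, the classical theorem itself, I would simply cite.
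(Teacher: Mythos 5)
Your proposal is correct, but there is essentially nothing in the paper to compare it against: the paper does not prove Theorem \ref{CST} at all. It defines pseudo-reflections, states the result as the classical Chevalley--Shephard--Todd theorem, and immediately uses it as a black box (to pass to $V/K$ for $K$ the subgroup generated by pseudo-reflections). What you have supplied is the standard, and correct, derivation of this pointwise geometric form from the graded algebraic form. All your key steps hold up: smoothness of $V/G$ may be checked on completed local rings at closed points, which are exactly the orbits; for the finite extension $B=\mathbb{C}[V]^G\subset A=\mathbb{C}[V]$ one has $A\otimes_B\widehat{B_{\mathfrak q}}\cong\prod_i\widehat{A_{\mathfrak p_i}}$, the factors are permuted transitively by $G$ with the stabilizer of the factor at $x$ equal to $G_x$ (your observation that all residue fields are $\mathbb{C}$, so decomposition groups coincide with point stabilizers, is exactly the point that needs saying); since $(-)^G$ is a kernel it commutes with the flat base change $B\to\widehat{B_{\mathfrak q}}$, giving $\widehat{\mathcal O}_{V/G,\bar x}\cong\bigl(\widehat{A_{\mathfrak p_x}}\bigr)^{G_x}\cong\mathbb{C}[[V]]^{G_x}$; because $G_x$ acts linearly and fixes $x$, in coordinates centred at $x$ this is the completion of the graded invariant ring $\mathbb{C}[V]^{G_x}$; and a connected finitely generated graded $\mathbb{C}$-algebra is a polynomial ring if and only if it is regular at the irrelevant ideal, so regularity of the completion is equivalent to $\mathbb{C}[V]^{G_x}$ being polynomial, at which point the graded Chevalley--Shephard--Todd theorem finishes both implications. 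In effect you have proven by hand, in the linear setting, that $\pi$ is formally locally at $\bar x$ the quotient map $V\to V/G_x$; this is a legitimate and self-contained route, whereas the paper simply cites the classical theorem.
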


Suppose that $K$ is the subgroup generated by pseudo-reflections (it is a normal subgroup). Then $V/K$ is smooth by the above theorem. Since $V/G=(V/K)/(G/K)$, where $V/K$ is smooth and {is isomorphic to $V$,} we can assume without loss of generality that $G$ contains no pseudo-reflections. We can also assume $\dim V\geq 2$ as otherwise we recover the curve case. 

For each point in the free locus $\Tilde{U}$, the stabilizer is trivial, therefore the condition of Theorem \ref{CST} is satisfied. Thus the quotient of the free locus is smooth. And the singular locus is exactly the image of the complement of the free locus and it has codimension at least 2 by assumption. Let $U$ be the smooth locus, the image of $\Tilde{U}$. As $G$ contains no pseudo-reflections, {locally this is a smooth covering map, }we see that $\pi^!D_U=D_{\tilde{U}}$ and $\pi^!\mathcal{O}_U=\mathcal{O}_{\tilde{U}}$.

We begin the calculation of the Ext groups. This requires a few steps:

\begin{lemma}\label{5.2}
{Let $X=V/G$ and $j:U\hookrightarrow X$ be the inclusion of the open smooth locus.} The cone $K$ in the exact triangle $\IC(X)\to j_*\Omega_U\to K\to$ is isomorphic to $\delta[1-  d]$, where $d$ is the dimension of $X$ and $\delta$ is the delta module supported at the singularity.
\end{lemma}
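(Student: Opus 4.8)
The plan is to reduce the whole computation to a local cohomology statement on the smooth cover $V$ and then descend by taking $G$-invariants. First I would record the cheap observation that both $\IC(X)$ and $j_*\Omega_U$ (the derived pushforward) restrict to $\Omega_U$ on the smooth locus, so the canonical map $\IC(X)\to j_*\Omega_U$ is an isomorphism over $U$ and its cone $K$ is supported at the singular point. Hence $K$ is a bounded complex whose cohomology sheaves are direct sums of delta modules, and it suffices to compute these cohomologies.

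Next I would pass to the cover $\pi:V\to X$. Write $\tilde j:\tilde U=V\setminus\{0\}\hookrightarrow V$, so that $\pi$ restricts to a finite étale $G$-cover $\pi_U:\tilde U\to U$ (this uses that $G$ has no pseudo-reflections and that the singularity is isolated, so that $G$ acts freely on $\tilde U$, as established just before the lemma). Upstairs the analogous cone is immediate: since $\mathcal{O}_V$ is Cohen--Macaulay of dimension $d$, the local cohomology $H^i_{\{0\}}(\Omega_V)$ vanishes for $i\neq d$, so from the triangle $R\Gamma_{\{0\}}(\Omega_V)\to\Omega_V\to R\tilde j_*\Omega_{\tilde U}$ one gets $\mathrm{cone}(\Omega_V\to R\tilde j_*\Omega_{\tilde U})=R\Gamma_{\{0\}}(\Omega_V)[1]=\delta_V[1-d]$, where $\delta_V$ is the delta module at the origin of $V$.

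The key step is to identify the downstairs triangle with the $G$-invariants of the upstairs one under the functor $(\pi_\bullet(-))^G$, which is exact because $G$ is finite and we work in characteristic zero. Concretely I would verify three compatibilities: $(\pi_{U,\bullet}\Omega_{\tilde U})^G=\Omega_U$ (invariants of the pushforward of the trivial local system along a finite étale cover); $(\pi_\bullet R\tilde j_*\Omega_{\tilde U})^G=Rj_*\Omega_U$, using $\pi\circ\tilde j=j\circ\pi_U$ and that invariants commute with $Rj_*$; and $(\pi_\bullet\Omega_V)^G=\IC(X)$. For the last identity I would show that $\pi_\bullet\Omega_V$ has no sub- or quotient-module supported at the origin: via the adjunction between $\pi_\bullet$ and its pullback, $\Hom_X(\delta_X,\pi_\bullet\Omega_V)$ is computed by pairing a pullback of $\delta_X$, which is supported at the origin of $V$, against $\Omega_V$, and this vanishes since $\Omega_V$ is simple and has no point-supported sub/quotient; as $(\pi_\bullet\Omega_V)^G$ restricts to $\Omega_U$ on $U$, it must be the intermediate extension $\IC(X)$. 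Applying $(\pi_\bullet(-))^G$ to the upstairs triangle then gives $K=(\pi_\bullet\delta_V)^G[1-d]$.

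Finally I would compute $(\pi_\bullet\delta_V)^G$. By functoriality of proper pushforward applied to $\{0\}\xrightarrow{i}V\xrightarrow{\pi}X$, one has $\pi_\bullet\delta_V=\pi_\bullet i_\bullet\mathbb{C}=(i_X)_\bullet\mathbb{C}=\delta_X$, a single copy of the simple delta module at the singular point; the residual $G$-action is therefore through a character, which is trivial because the relevant generator is the image of the $G$-equivariant residue map (equivalently, $G\subset GL_d(\mathbb{C})$ preserves the complex orientation and so acts trivially on the top local cohomology). Hence $(\pi_\bullet\delta_V)^G=\delta$ and $K\cong\delta[1-d]$, as claimed. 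I expect the main obstacle to be the third compatibility $(\pi_\bullet\Omega_V)^G=\IC(X)$, i.e.\ ruling out a spurious delta in cohomological degree $0$ (cleanness of the intermediate extension there); the determination of the character in the last step is the other delicate point, which I would settle either by the equivariance of the residue map or by the orientation argument.
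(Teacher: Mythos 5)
Your proof is correct, but it takes a genuinely different route from the paper's. The paper never leaves $X$: it quotes the identity $\Ext^i(\delta,\IC(X))\cong H^{d-i}_{dR}(U)$ from Etingof--Schedler, computes $H^{\bullet}_{dR}(U)\cong H^{\bullet}_{dR}(S^{2d-1})$ by Cartan--Leray (using that $\tilde{U}$ is homotopy equivalent to $S^{2d-1}$ and $G$ is finite), and then, since $\Ext^i(\delta,j_*\Omega_U)\cong \Ext^i(j^!\delta,\Omega_U)=0$, identifies $\Ext^i(\delta,K)\cong \Ext^{i+1}(\delta,\IC(X))$ with the multiplicity of $\delta[-i]$ in $K$; the sphere cohomology then forces $K\cong\delta[1-d]$. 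You instead compute the cone upstairs on $V$, where it is pure commutative algebra ($R\Gamma_{\{0\}}(\Omega_V)\cong\delta_V[-d]$ by Cohen--Macaulayness), and descend along the exact functor $(\pi_\bullet(-))^G$. This trades the paper's topological input for two compatibilities you must supply: $(\pi_\bullet\Omega_V)^G\cong\IC(X)$, which you handle correctly via the no-point-supported-sub-or-quotient characterization of intermediate extensions (the paper records this fact only later, in the remark following Proposition \ref{propo}, citing the decomposition theorem for finite maps), and the triviality of the $G$-character on the one-dimensional multiplicity space of $\pi_\bullet\delta_V$. The latter is the real pressure point, and you are right to isolate it: for right $D$-modules $\delta_V=H^d_{\{0\}}(\Omega_V)$, and if $G$ acted on that multiplicity space by $\det$ the invariants would vanish and the lemma would fail; your residue/orientation argument is exactly what shows that the two determinant twists (from $\Omega_V$ and from $H^d_{\{0\}}(\mathcal{O}_V)$) cancel. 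As for what each approach buys: yours is more self-contained (no Cartan--Leray, no citation of the $\Ext$-equals-de-Rham lemma) and produces $(\pi_*\Omega_V)^G\cong\IC(X)$ as a by-product, a statement the paper wants anyway; the paper's is shorter given its citations, avoids all equivariant bookkeeping, and extends verbatim to nontrivial local systems $N$ (as in the remark immediately after the lemma, computing the cone of $\IC(N)\to j_*N$), where your descent through the canonically descending module $\Omega_V$ would need modification. One detail you gloss: identifying $K$ with the $G$-invariants of the upstairs cone requires matching the maps of the two triangles, not just their terms; this does hold, since both maps are the unit of the adjunction $\Id\to Rj_*j^*$ applied to the same object.
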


\begin{proof}
Recall that for $i>0$, $$\Ext^i(\IC(X),\delta)=\Ext^i(\delta,\IC(X))=H^{d-i}_{dR}(U)$$ by \cite[Lemma 4.3]{Poisson_Traces_and_D-Modules_on_Poisson_Varieties}. Since $G$ is finite and we are calculating cohomology with coefficients in $\mathbb{C}$, applying the Cartan-Leray spectral sequence to $\Tilde{U}\simeq S^{2d-1}$ (homotopic) and $G$ we see that $H^{d-i}_{dR}(U)\cong H^{d-i}_{dR}(S^{2d-1})$. 

The exact triangle $\IC(X)\to j_*\Omega_U\to K$ induces the exact sequence $$\Ext^i(\delta,j_*\Omega_U)\to \Ext^i(\delta,K)\to\Ext^{i+1}(\delta,\IC(X))\to\Ext^{i+1}(\delta,j_*\Omega_U),$$ and $$\Ext^i(\delta,j_*\Omega_U)\cong\Ext^i(j^!\delta,\Omega_U)=0$$ as $j^!\delta=0$. Therefore $$\Ext^i(\delta,K)\cong \Ext^{i+1}(\delta,\IC(X))\cong H^{d-i-1}_{dR}(S^{2d-1}).$$ Since $K$ is concentrated at the singularity, it is a direct sum of $\delta$'s, so $\Ext^i(\delta, K)$ gives the multiplicity of $\delta[-i]$ in $K$ (for $i \geq 0$). Therefore $K=\delta[1-d]$.
\end{proof}

\begin{remark}
It follows similarly that for holonomic $D$-modules $\IC(N)$, we have $\Ext^m(\IC(N),\delta)\cong H^{d-m}_{dR}(U,N)^*$, and the cone $K$ in $\IC(N)\to  j_*N\to K$ is isomorphic to $\delta\otimes_{\mathbb{C}} H^{d-1-*}_{dR}(U,N)$. When $N$ is a non-trivial topological local system $L$, we can compute this further. $\Ext^m(\IC(N),\delta)$ is then $0$. Indeed, $H^i(U,L)\cong H^i(S^{2d-1},\pi^*L)$, hence $H^{0<i<2d-1}(U,L)=0$ and {if} $L$ is non-trivial then $H^0(U,L)=0$; finally by an Euler characteristic consideration, $H^{2d-1}(U,L)=0$. This proves that for an isolated singularity and a non-trivial simple topological local system $L$, $j_*L=\IC(L)$, \textit{i.e.}, it is a clean extension.
\end{remark}

\begin{proposition}\label{propo}\label{5.4}
$\pi^!\IC(X)=\Omega_V$ and $\pi^!\IC(L)=j_*\Omega_{\Tilde{U}}$ for $L$ a non-trivial simple topological local system.
\end{proposition}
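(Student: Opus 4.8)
The plan is to treat the two assertions separately, deducing the non-trivial local system case from the cleanness established in the Remark preceding the statement together with base change, and handling the trivial (constant) case by applying $\pi^!$ to the fundamental triangle of Lemma \ref{5.2}. Throughout write $d=\dim V=\dim X$, let $i_0:\{0\}\hookrightarrow V$ and $i_0^X:\{0\}\hookrightarrow X$ (so $\pi\circ i_0=i_0^X$), and write also $\tilde{j}$ for the open inclusion $\tilde U\hookrightarrow V$, so that $\tilde{j}_*\Omega_{\tilde U}$ is the object denoted $j_*\Omega_{\tilde U}$ in the statement.

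For the first statement, start from the Cartesian square with horizontal maps $\tilde{j}:\tilde U\to V$, $j:U\to X$ and vertical maps $\pi_U:\tilde U\to U$, $\pi:V\to X$. Since $\pi$ is finite, hence proper, proper base change (\cite[Proposition 5.4.2]{Gaitsgory_2013}, exactly as used in Proposition \ref{prop}) gives a natural isomorphism $\pi^! j_*\cong \tilde{j}_*\,\pi_U^!$. By the Remark preceding the statement, for a non-trivial simple topological local system $L$ the extension is clean, so $\IC(L)=j_*L$; therefore $\pi^!\IC(L)=\tilde{j}_*\,\pi_U^! L$. Now $\pi_U$ is a finite étale $G$-cover, so $\pi_U^! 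L=\pi_U^* L$ is again a local system on $\tilde U$; but $\tilde U$ is the complement of a codimension $\geq 2$ subset of $V$ (equivalently, homotopy equivalent to the link $S^{2d-1}$, simply connected since $d\geq 2$), so every local system on it is trivial. Hence $\pi_U^! L\cong\Omega_{\tilde U}$ (of rank $\dim L$), and $\pi^!\IC(L)=\tilde{j}_*\Omega_{\tilde U}$, as claimed.

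For the second statement, apply $\pi^!$ to the triangle $\IC(X)\to j_*\Omega_U\to\delta[1-d]$ of Lemma \ref{5.2}. The middle term becomes $\pi^! j_*\Omega_U=\tilde{j}_*\,\pi_U^!\Omega_U=\tilde{j}_*\Omega_{\tilde U}$ by the same base change. On the other hand, on the smooth variety $V$ the reasoning of Lemma \ref{5.2} applied at $0\in V$ (whose link is again $S^{2d-1}$) yields the triangle $\Omega_V\to\tilde{j}_*\Omega_{\tilde U}\to\delta_V[1-d]$, since $\Omega_V$ is the intermediate extension of $\Omega_{\tilde U}$ along $\tilde{j}$ and sections of $\Omega$ extend across codimension $\geq 2$. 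Comparing the two triangles over their common middle term $\tilde{j}_*\Omega_{\tilde U}$ reduces $\pi^!\IC(X)=\Omega_V$ to matching the third term $\pi^!\delta[1-d]$ with $\delta_V[1-d]$ compatibly with the maps out of $\tilde{j}_*\Omega_{\tilde U}$.

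The main obstacle is exactly this last identification, since $\pi$ is not flat at the singular point, $\pi^{-1}(0)$ is a non-reduced fat point, and $\pi^!$ need not be $t$-exact there. I would resolve it by computing the costalk directly: as $\pi\circ i_0=i_0^X$, we have $i_0^!\,\pi^!\IC(X)=(i_0^X)^!\IC(X)$, whose cohomology is governed by $\Ext^\bullet(\delta_X,\IC(X))\cong H^{d-\bullet}_{dR}(U)$ as in the proof of Lemma \ref{5.2}. Since $U\simeq S^{2d-1}/G$ and $G\subset GL(V)$ acts complex-linearly, hence preserving orientation, $H^\bullet_{dR}(U)=H^\bullet(S^{2d-1})^G$ is $\mathbb{C}$ in degrees $0$ and $2d-1$ and vanishes otherwise. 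This pins down the costalk of $\pi^!\IC(X)$ at the origin; together with a dual computation of $i_0^*\,\pi^!\IC(X)$ (via $\mathbb{D}\pi^!=\pi^*\mathbb{D}$ and the self-duality of $\IC(X)$) and the fact that $\pi^!\IC(X)|_{\tilde U}=\Omega_{\tilde U}$, it forces $\pi^!\IC(X)$ to be the intermediate extension of $\Omega_{\tilde U}$, namely $\Omega_V$, with no spurious delta contributions.
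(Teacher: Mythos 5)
Your proof of the second statement, $\pi^!\IC(L)=j_*\Omega_{\Tilde{U}}$, is correct and coincides with the paper's: cleanness from the preceding Remark plus base change (and triviality of $\pi_U^!L$ on the simply connected $\Tilde{U}$). For the first statement, your reduction to comparing the two triangles over the common middle term $\tilde{j}_*\Omega_{\Tilde{U}}$ is also the paper's starting point, and you correctly identify that everything hinges on matching the connecting maps out of $\tilde{j}_*\Omega_{\Tilde{U}}$. The gap is in your resolution of that point. Your costalk computation is right ($i_0^!\pi^!\IC(X)=(i_0^X)^!\IC(X)\cong\mathbb{C}[-d]$, since $H^\bullet_{dR}(U)\cong H^\bullet(S^{2d-1})$), but it cannot force the conclusion: since $\Hom(\tilde{j}_*\Omega_{\Tilde{U}},\delta[1-d])\cong\mathbb{C}$ (this is the paper's computation $\RHom(j_*\Omega_{\Tilde{U}},\Tilde{K})\cong\mathbb{C}\oplus\mathbb{C}[2d-1]$), any object restricting to $\Omega_{\Tilde{U}}$ on $\Tilde{U}$ with costalk $\mathbb{C}[-d]$ at the origin is either $\Omega_V$ (connecting map non-zero) or the split object $\tilde{j}_*\Omega_{\Tilde{U}}\oplus\delta[-d]$ (connecting map zero). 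Both candidates have identical restriction and identical costalk, because $i_0^!\tilde{j}_*=0$; so the costalk cannot exclude the split case, and excluding it is exactly the content of the proposition.

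The stalk at the origin would indeed distinguish the two candidates, but your proposed computation of it is circular. Writing $\pi^!\IC(X)=\mathbb{D}\pi^*\mathbb{D}\IC(X)=\mathbb{D}\pi^*\IC(X)$ gives $i_0^*\pi^!\IC(X)\cong\mathbb{D}\,i_0^!\pi^*\IC(X)$; but $i_0^!\pi^*$ admits no simplification via composition of functors on $X$ (only $i_0^!\pi^!=(i_0^X)^!$ and $i_0^*\pi^*=(i_0^X)^*$ compose), and applying duality once more simply returns the stalk you started with. So the stalk is never actually computed, and no step of your argument rules out the splitting. This is precisely where the paper does the real work: it shows that both connecting maps $\tilde{j}_*\Omega_{\Tilde{U}}\to\Tilde{K}$ are non-zero. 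The map in the triangle for $\Omega_V$ is non-zero because otherwise $\Omega_V$ would decompose; the map in the triangle pulled back from $X$ is non-zero because $\IC(X)$ indecomposable makes $j_*\Omega_U\to K$ non-zero, and $\pi^!:\Hom(j_*\Omega_U,K)\to\Hom(\pi^!j_*\Omega_U,\pi^!K)$ is injective — proved by identifying it, via adjunction, with precomposition by the counit $\pi_!\pi^!j_*\Omega_U\to j_*\Omega_U$ and computing $\pi_!\pi^!j_*\Omega_U=j_*(\bigoplus L)$ with the counit being the projection onto the $\Omega_U$-summand. Your proposal has no counterpart to this non-vanishing argument, so it does not close.
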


\begin{proof}
Note that we have {an} exact triangle $\pi^!\IC(X)\to\pi^!j_*\Omega_U\to \pi^!K\to$. Base change formula implies $\pi^!j_*\Omega_U\cong j_*\Omega_{\Tilde{U}}$. Let $\Tilde{K}$ be the cone of $\IC(V)\to j_*\Omega_{\Tilde{U}}\to \Tilde{K}\to$. The proof of the previous Lemma implies that $\Tilde{K}\cong \delta[1-d]$, where $\delta$ is supported at the preimage of the isolated singularity. Using the base change formula we see that $\pi^!K\cong \Tilde{K}$. In other words, we have $$\pi^!\IC(X)\to j_* \Omega_{\Tilde{U}}\to \Tilde{K}\to.$$ We also have $\Omega_V=\IC(V)$, therefore there is another exact triangle $$\Omega_V\to j_*\Omega_{\Tilde{U}}\to \Tilde{K}\to.$$ 

Recall $\Tilde{K}=\delta[1-d]$. We can compute $\Hom(j_*\Omega_{\Tilde{U}},\Tilde{K})$. The triangle $\Omega_V\to j_*\Omega_{\Tilde{U}}\to \Tilde{K}$ induces $$\Ext^{i-1+d}(\delta,\delta)\to \Ext^i(j_*\Omega_{\Tilde{U}},\delta)\to\Ext^i(\Omega_V,\delta)\to \Ext^{i+d}(\delta,\delta).$$ Note $\Ext^i(\delta,\delta)=\mathbb{C}$ if $i=0$ and 0 otherwise, and $$\Ext^i(\IC(V),\delta)\cong\Ext^i(\delta,\IC(V))\cong H^{d-i}_{dR}(\Tilde{U})\cong H^{d-i}_{dR}(S^{2d-1})$$ for $i>0$. So $\Ext^i(j_*\Omega_{\Tilde{U}},\delta)=\mathbb{C}$ if $i=d$ or $i=-d+1$, \textit{i.e.}, $\RHom(j_*\Omega_{\Tilde{U}},K)\cong \mathbb{C}\oplus\mathbb{C}[2d-1]$. In particular $\Hom(j_*\Omega_{\Tilde{U}},\Tilde{K})=\mathbb{C}$. 
We need to show that the two arrows of $j_*\Omega_{\Tilde{U}}\to\Tilde{K}$ are non-zero, 
it will follow that the two triangles must be the same up to rescaling, hence $\pi^!\IC(X)\cong \Omega_V$.

The arrow of $j_*\Omega_{\Tilde{U}}\to\Tilde{K}$ of the second triangle is non-zero as otherwise it will imply that $\Omega_V\cong j_*\Omega_{\Tilde{U}}\oplus\delta[-d]$ which is impossible. 

For the first triangle, note that $\IC(X)$ is indecomposable implies that the map $j_* \Omega_U \to K$ is non-zero. 
As the map we want to is non-zero is the image of $j_* \Omega_U \to K$ under $\pi^!:\Hom(j_* \Omega_U, K) \to \Hom(\pi^! j_* \Omega_U, \pi^! K)$, 
it suffices to show that $\pi^!$ is an injection between the Hom sets.
By adjunction this is identified with
$\Hom(j_* \Omega_U, K) \to \Hom(\pi_! \pi^! j_* \Omega_U, K)$, composing with the natural map
$\pi_! \pi^! j_* \Omega_U \to j_* \Omega_U$.
But $$\pi_! \pi^! j_* \Omega_U = \pi_! j_* \Omega_{\tilde {U}} = \pi_* j_* \Omega_{\tilde {U}} = j_* \pi_* \Omega_{\tilde {U}} = j_* (\bigoplus L),$$
summing over all rank one local systems $L$ on $U$.
Under this identification, $\pi_! \pi^! j_* \Omega_U \to j_* \Omega_U$ becomes the projection map $j_* (\bigoplus L)\to j_* \Omega_U$, which is the only non-zero map up to scaling.
Since this is a projection map, we obtain that $\pi^!$ is indeed an injection between the Hom sets.

For the second statement, we use the fact that it is a clean extension and base change:
$$\pi^!\IC(L)=\pi^!j_*L=j_*\pi^!L=j_*\Omega_{\Tilde{U}}.$$
 
\end{proof}

\begin{remark}
Consider the commutative diagram
\begin{center}
    \begin{tikzcd}
\tilde{U} \arrow[r, "j"] \arrow[d, "\pi"] & V \arrow[d, "\pi"] \\
U \arrow[r, "j"]                          & X,                 
\end{tikzcd}
\end{center}
one sees that $\IC(\pi_*\Omega_{\Tilde{U}})=\pi_*(\IC(V))$ (\cite[Example 4.43]{nikolaev}). Since on the smooth open $\tilde{U}$ we have $\IC(V)=j_*\Omega_{\Tilde{U}}$, it follows that $\IC(\pi_*\Omega_{\Tilde{U}})=\pi_*(\Omega_V)$. Moreover, as the free locus $\pi:\tilde{U}\to U$ is a covering, we know that $\pi_*\Omega_{\Tilde{U}}=\Omega_U\otimes_{\mathbb{C}}\mathbb{C}[G]$. Hence $\IC(X)=\pi_*(\Omega_V)^G$ and $\pi^!(\IC(X))=\pi^!(\pi_*(\Omega_V)^G)$. 
\end{remark}

\begin{lemma}
$\Diff(V/G)\cong\Diff(V)^G$
\end{lemma}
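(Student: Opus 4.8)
The plan is to prove the isomorphism $\Diff(V/G) \cong \Diff(V)^G$ directly from the definition of Grothendieck differential operators as the union of the filtration pieces $\Diff^{\leq k}$, where $\Diff^{\leq k}(\mathcal{O})$ consists of those $\mathbb{C}$-linear endomorphisms whose iterated commutators with $k+1$ functions vanish. Since $X = V/G$ and $\mathcal{O}_X = \mathcal{O}_V^G$ by definition of the GIT quotient, a differential operator on $V/G$ is precisely a differential operator on the invariant ring $\mathcal{O}_V^G$. The key geometric input, already established above, is that $G$ contains no pseudo-reflections and acts freely away from a codimension-$\geq 2$ locus, so that $\mathcal{O}_V$ is, up to this bad locus, an étale (covering) extension of $\mathcal{O}_V^G$.

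First I would set up the natural map. Any $G$-invariant differential operator $D \in \Diff(V)^G$ restricts to an operator on the invariants $\mathcal{O}_V^G$: indeed for invariant $f$, $D(f)$ is again invariant because $g \cdot D(f) = (g \cdot D)(g \cdot f) = D(f)$. This gives a filtered algebra homomorphism $\Diff(V)^G \to \Diff(\mathcal{O}_V^G) = \Diff(V/G)$. The task is to show this is an isomorphism of filtered algebras. I would argue this in two movements: injectivity, and surjectivity/essential image.

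For injectivity, the point is that any differential operator on $\mathcal{O}_V$ is determined by its restriction to the smooth free locus $\tilde{U}$ (where $\pi: \tilde U \to U$ is an honest covering), because $\mathcal{O}_V \hookrightarrow \mathcal{O}_{\tilde U}$ is injective and differential operators are determined by their action on a dense open; an invariant operator restricting to zero on $\mathcal{O}_V^G$ must vanish. For surjectivity, I would work over the free locus, where $\pi$ is étale: there $\Diff(U) \cong \Diff(\tilde U)^G$ is standard, since étale maps induce isomorphisms on differential operators (the module of differentials pulls back isomorphically, hence so do all the $\mathcal{P}^k$ jet algebras, and one can descend by taking $G$-invariants of a covering). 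Thus every differential operator on $\mathcal{O}_U = (\mathcal{O}_V^G)|_U$ extends canonically to an invariant operator on $\mathcal{O}_{\tilde U}$, i.e. on $\mathcal{O}_V|_{\tilde U}$. The main obstacle, and the crux of the argument, is the extension across the bad locus: one must show that an invariant differential operator defined on the free locus, or equivalently a differential operator on $\mathcal{O}_V^G$, extends to a genuine differential operator on all of $\mathcal{O}_V$ (respectively, has its values actually landing in $\mathcal{O}_V^G \subset \mathcal{O}_V$ globally). Here I would invoke the codimension-$\geq 2$ hypothesis together with normality of $V$ (it is smooth): since $\mathcal{O}_V$ and $\mathcal{O}_V^G$ are normal and the complement of the free locus has codimension at least $2$, the algebraic Hartogs principle guarantees that functions, and more generally the coefficients of a differential operator written in local coordinates, extend uniquely across the bad locus. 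Concretely, an operator in $\Diff(\mathcal{O}_V^G)$ lifts to a $G$-invariant operator on $\mathcal{O}_{\tilde U} = \mathcal{O}_V|_{\tilde U}$, and its defining data extend over codimension $2$ by normality, yielding an element of $\Diff(V)^G$ mapping to it.

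I expect the genuine difficulty to lie precisely in this codimension-$2$ extension step: one must check not merely that invariant \emph{functions} extend, but that the entire differential-operator structure — equivalently, the compatibility of the extended operator with the filtration by order — is preserved, so that the extended operator remains a Grothendieck differential operator rather than merely a $\mathbb{C}$-linear map. This is where the finite global dimension and normality of the smooth variety $V$, combined with the codimension bound, do the real work, ensuring the Hartogs-type extension is automatic. Once this is settled, the map is a filtered isomorphism, and taking $G$-invariants commutes with the union over the order filtration, completing the proof.
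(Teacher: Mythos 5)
Your proposal is correct and takes essentially the same route as the paper: both identify operators on $V/G$ with $G$-invariant operators over the free locus (where $\pi$ is étale) and then extend across the non-free locus, which has codimension $\geq 2$ precisely because $G$ contains no pseudo-reflections, by Hartogs. The only difference is in how the extension step preserves the order filtration — the paper extends principal symbols (sections of $\text{Sym}^m T_V$) and inducts on the order $m$, while you extend the coefficients of the operator directly (equivalently, use that the sheaf of operators of order $\leq m$ is locally free, so its sections extend across codimension two), which resolves the worry you raise about the extension being a genuine differential operator.
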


\begin{proof}
Every differential operator on $V/G$ gives a $G$-invariant differential operator on $V_{\text{free}}$, which by taking principal symbol gives a section of $\text{Sym}^mT_{V_{\text{free}}}$. As $G$ contains no pseudo-reflections, by Hartogs' theorem, these sections extend uniquely to $\text{Sym}^mT_{V}$, that is, principal symbols of differential operators in $\Diff(V)^G$. Therefore by induction on $m$, we can conclude the lemma. 
\end{proof}

The next lemma was first proved in \cite[Lemma 2.9]{Poisson_Traces_and_D-Modules_on_Poisson_Varieties} by considering the induction functor Ind from $\mathcal{O}$-modules to right $D$-modules on any algebraic variety and noting that $\pi_*\circ \text{Ind}=\text{Ind}\circ\pi_*$ for all proper morphisms $\pi$ (this follows from the two adjunctions Ind $\dashv$ Res and $\pi_*\dashv\pi^!$). Here we give a more direct proof. Note that neither proof uses the assumption that $G$ contains no pseudo-reflections.

\begin{lemma}\label{key}
$D_{X}=(\pi_*D_V)^G$.
\end{lemma}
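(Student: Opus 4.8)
The plan is to reduce the statement to a transfer-module computation on a smooth ambient variety and then take invariants. Fix a closed embedding $i\colon X\hookrightarrow W$ into a smooth affine variety $W$ (for instance $W=\mathbb{A}^N$), so that by definition $D_X=\mathcal{O}_X\otimes_{\mathcal{O}_W}D_W$, the induced right $D_W$-module, where $\mathcal{O}_X$ is regarded as an $\mathcal{O}_W$-module through $i$. Since $D$-modules on $X$ are by definition $D_W$-modules supported on $X$, it suffices to identify $\pi_*D_V$ with a right $D_W$-module supported on $X$ and then compute its $G$-invariants there.

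First I would compute $\pi_*D_V$ explicitly. Consider the composite $q:=i\circ\pi\colon V\to W$, a finite (hence affine and proper) morphism of smooth varieties, and recall that for the canonical right $D$-module $D_V=\mathcal{D}_V$ the pushforward is $q_*D_V=Rq_\bullet\bigl(D_V\otimes^{\mathbb{L}}_{\mathcal{D}_V}\mathcal{D}_{V\to W}\bigr)$, with transfer bimodule $\mathcal{D}_{V\to W}=\mathcal{O}_V\otimes_{q^{-1}\mathcal{O}_W}q^{-1}\mathcal{D}_W=q^*\mathcal{D}_W$. Because $q$ is affine, $Rq_\bullet=q_\bullet$ is exact, and because $D_V\otimes^{\mathbb{L}}_{\mathcal{D}_V}\mathcal{D}_{V\to W}=\mathcal{D}_{V\to W}$, the projection formula gives $q_*D_V=(q_\bullet\mathcal{O}_V)\otimes_{\mathcal{O}_W}D_W$, a right $D_W$-module concentrated in degree $0$ and supported on $X$. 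Thus, as a $D$-module on $X$ (via Kashiwara's identification, since $q_*D_V=i_*\pi_*D_V$), we have $\pi_*D_V=\mathcal{O}_V\otimes_{\mathcal{O}_W}D_W$, where $\mathcal{O}_V$ denotes the coordinate ring of $V$ viewed as an $\mathcal{O}_W$-module via $\mathcal{O}_W\twoheadrightarrow\mathcal{O}_X\hookrightarrow\mathcal{O}_V$, with the $G$-action induced on the first tensor factor and acting trivially on $D_W$.

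Next I would take $G$-invariants. As we work over $\mathbb{C}$ with $G$ finite, the averaging idempotent $e=\frac{1}{|G|}\sum_{g\in G}g$ splits off invariants and makes $(-)^G$ exact; moreover $e$ is $\mathcal{O}_X=(\mathcal{O}_V)^{G}$-linear, hence $\mathcal{O}_W$-linear, so $\mathcal{O}_V=(\mathcal{O}_V)^{G}\oplus(1-e)\mathcal{O}_V$ is a direct-sum decomposition of $\mathcal{O}_W$-modules. Tensoring with $D_W$ over $\mathcal{O}_W$ respects this decomposition, so
\[
(\pi_*D_V)^G=\bigl(\mathcal{O}_V\otimes_{\mathcal{O}_W}D_W\bigr)^G=(\mathcal{O}_V)^G\otimes_{\mathcal{O}_W}D_W=\mathcal{O}_X\otimes_{\mathcal{O}_W}D_W=D_X,
\]
using $(\mathcal{O}_V)^G=\mathcal{O}_{V/G}=\mathcal{O}_X$ in the penultimate step. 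Equivalently, one can phrase the whole argument through the induction functor, as in the cited Lemma 2.9, writing $D_\bullet=\mathrm{Ind}(\mathcal{O}_\bullet)$ and invoking $\pi_*\circ\mathrm{Ind}=\mathrm{Ind}\circ\pi_*$; the direct route above avoids that formula.

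The main obstacle I anticipate is the first step: justifying that $\pi_*D_V$ is concentrated in cohomological degree $0$ and is faithfully described by the naive transfer-module expression, i.e.\ that no higher direct images intervene. This is exactly where finiteness (affineness) of $\pi$ is essential, and it also requires keeping the left/right module conventions and the two meanings of $\pi_*$ (as a $D$-module versus an $\mathcal{O}$-module pushforward) carefully aligned. By contrast, the invariants step is comparatively routine once one observes that $(-)^G$ is realized by an $\mathcal{O}_X$-linear idempotent and therefore commutes with $-\otimes_{\mathcal{O}_W}D_W$; note in particular that, as the paper remarks, neither this argument nor the conclusion uses the assumption that $G$ contains no pseudo-reflections.
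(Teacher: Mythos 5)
Your proof is correct and takes essentially the same route as the paper's: both choose a closed embedding of $X$ into a smooth ambient variety, identify the $D$-module pushforward of $D_V$ under the composite map with the sheaf-theoretic pushforward of the transfer bimodule $\mathcal{O}_V\otimes_{\mathcal{O}_W}D_W$, and then pass $G$-invariants through the tensor product using $(\mathcal{O}_V)^G=\mathcal{O}_X$ to land on $\mathcal{O}_X\otimes_{\mathcal{O}_W}D_W=D_X$. Your averaging-idempotent argument for why invariants commute with $-\otimes_{\mathcal{O}_W}D_W$ spells out a step the paper's computation leaves implicit, but the approach is the same.
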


\begin{proof}

Choose an embedding $i:X\to Y$, we write $\pi'$ for $i\circ\pi$.
We can compute 
\begin{align*}
    (\pi'_*D_V)^G & =\pi'_{\bullet} (D_{V\to Y})^G\\
    & =\pi'_\bullet (\mathcal{O}_V \otimes_{(\pi')^{-1}\mathcal{O}_Y} D_Y)^G\\
    & =\pi'_\bullet ({\mathcal{O}_V}^G \otimes_{(\pi')^{-1}\mathcal{O}_Y} D_Y)\\
    & = I_X\backslash D_Y\\
    & = i_*D_X,
\end{align*}
where $\pi_\bullet$ (respectively $\pi^{-1}$) is the sheaf-theoretic pushforward (respectively pullback).

\end{proof}

\begin{remark}
From the previous lemma we can conclude that $$\Hom((\pi_*D_V)^G,D_X)=\Hom(D_X,D_X)=\Diff(X),$$ which is $\Diff(V)^G=\Hom(D_V,D_V)^G$ when $G$ contains no pseudo-reflections. By adjunction, the left hand side also equals to $$\Hom((\pi_*D_V)^G,D_X)\cong\Hom(D_V,\pi^!D_X)^G.$$ Therefore we conclude that if $G$ contains no pseudo-reflections then $$\Gamma(V,\pi^!D_X)^G\cong\Gamma(V,D_V)^G.$$
\end{remark}

With the help of this lemma, we can now do the calculation. For a vector space $W$ with a group $G$ acting on it, denote $W^{G,\bot}:=\Ker(W^*\to (W^G)^*)=(W/W^G)^*$.

\begin{theorem}\label{5.9}
Let $N$ be a holonomic $D$-module on $U$, then $\Ext^{m}(D_X,\IC(N))\cong (\Gamma(V,\delta)\otimes(H^{-m-1}_{dR}(\tilde{U},\pi^!\mathbb{D}N)^{G,\bot}))^G$ for $m>0$. In particular, we have that $\Ext^{\bullet}(D_X,\IC(X))$ is concentrated in degree 0. 
\end{theorem}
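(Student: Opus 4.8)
The plan is to compute $\Ext^m(D_X, \IC(N))$ by passing to the quotient $\pi: V \to X = V/G$ and using the equivariant adjunction $\pi_* \dashv \pi^!$ together with Lemma \ref{key}. The starting point is to write $\Ext^m(D_X, \IC(N)) = \Hom_{D^b(\mathcal{A})}(D_X, \IC(N)[m])$ and to replace $D_X$ using Lemma \ref{key}, which gives $D_X = (\pi_* D_V)^G$. Since taking $G$-invariants is exact (as $G$ is finite and we work over $\mathbb{C}$, so $\mathbb{C}[G]$ is semisimple), we may pull the invariants out of the $\RHom$ and obtain
\begin{equation*}
\Ext^m(D_X, \IC(N)) \cong \left( \Ext^m_{D^b(D_V)}(D_V, \pi^! \IC(N)) \right)^G,
\end{equation*}
where I have used the adjunction $\Hom(\pi_* D_V, \IC(N)) \cong \Hom(D_V, \pi^! \IC(N))$ in its derived form. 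Now $\RHom(D_V, -)$ on the smooth variety $V$ is just the functor $\Gamma_V(-)$ of sections supported at the origin (the defining property of $D_V$), so the right-hand side becomes the $G$-invariants of the local cohomology $\mathbb{R}\Gamma_{\{0\}}(\pi^! \IC(N))$ in degree $m$.

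Next I would identify $\pi^! \IC(N)$ and its local cohomology concretely. The key computation of $\pi^!$ on intermediate extensions is already available: from the cone description in Lemma \ref{5.2} and its generalization in the following remark, one knows the cone $K$ in $\IC(N) \to j_* N \to K$ is $\delta \otimes_{\mathbb{C}} H^{d-1-*}_{dR}(U, N)$, and base change gives $\pi^! j_* N \cong j_* \pi^! N = j_* \Omega_{\tilde U}$ (for $N$ a local system). Applying $\pi^!$ to the triangle and computing $\mathbb{R}\Gamma_{\{0\}}$ of each term, the contribution of $j_* \pi^! N$ vanishes away from the relevant degree (its supported-at-origin sections are controlled by the de Rham cohomology of the link $\tilde U$, which by the remark after Lemma \ref{5.2} is that of the sphere $S^{2d-1}$ twisted by $\pi^! \mathbb{D} N$), and the delta contribution is pinned down by $\Gamma(V, \delta)$. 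Assembling these and taking $G$-invariants should yield the stated formula $\Ext^m(D_X, \IC(N)) \cong (\Gamma(V,\delta) \otimes H^{-m-1}_{dR}(\tilde U, \pi^! \mathbb{D} N)^{G,\bot})^G$; the subspace $(-)^{G,\bot} = (W/W^G)^*$ appears precisely because the trivial-isotypic part of the link cohomology is exactly what the map from $\IC(X)$ (i.e. the invariant/untwisted piece) absorbs, so only the complement survives in the cone.

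For the final assertion, that $\Ext^\bullet(D_X, \IC(X))$ is concentrated in degree $0$, I would specialize $N = \Omega_U$, so $\mathbb{D} N$ is again (up to shift) the trivial local system and $\pi^! \mathbb{D} N = \Omega_{\tilde U}$. Then $H^{-m-1}_{dR}(\tilde U, \Omega_{\tilde U}) = H^{-m-1}_{dR}(S^{2d-1})$ for the link, and the point is that the only nonzero reduced de Rham groups of the sphere $S^{2d-1}$ sit in degrees $0$ and $2d-1$; moreover the degree-zero (trivial) class is exactly the $G$-trivial part killed by the $(-)^{G,\bot}$ construction. Hence for all $m > 0$ the group $H^{-m-1}_{dR}(\tilde U)^{G,\bot}$ vanishes, forcing $\Ext^m(D_X, \IC(X)) = 0$ for $m > 0$, which combined with $\Ext^0 = \End(D_X) = \Diff(X) \neq 0$ gives concentration in degree $0$.

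The main obstacle I anticipate is justifying the interchange of $G$-invariants with the derived $\RHom$ and the careful bookkeeping of the shift/duality in the link cohomology term, in particular verifying that the natural map from the $\IC(X)$-part accounts for precisely the $G$-trivial summand of the link cohomology so that the orthogonal complement $(-)^{G,\bot}$ emerges correctly rather than the full cohomology. Getting the cohomological degrees to line up (the appearance of $H^{-m-1}_{dR}$ with its negative index, reflecting the shift by $d$ and the duality $\mathbb{D}$) will require tracking the conventions in Lemma \ref{5.2} and the remark after it precisely, and this is where I would spend the most care.
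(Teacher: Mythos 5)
Your opening reduction is correct and matches the paper's strategy: by Lemma \ref{key}, semisimplicity of $\mathbb{C}[G]$ (so that $(\pi_*D_V)^G$ is a direct summand of $\pi_*D_V$), and the adjunction $\pi_*\dashv\pi^!$, one gets $\Ext^m(D_X,\IC(N))\cong \Ext^m_{D_V}(D_V,\pi^!\IC(N))^G$. The fatal step is the next one. The defining property of the canonical module is $\Hom(D_Z,M)=\Gamma_Z(M)$ for an embedded variety $Z$; for $Z=V$ embedded in itself by the identity, $\mathcal{I}_V=0$, so $\Gamma_V(M)=\Gamma(V,M)$ is the space of \emph{all} global sections. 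Since $V$ is smooth affine, $D_V$ is free over itself and $\RHom(D_V,-)=\mathbb{R}\Gamma(V,-)$, which is exact on modules; it is emphatically not the local cohomology $\mathbb{R}\Gamma_{\{0\}}$ (you have conflated $D_V$ with the canonical module of the origin, $\mathcal{I}_0D_V\backslash D_V$). This error is not cosmetic: it contradicts the statement being proved. For $N=\Omega_U$ we have $\pi^!\IC(X)\cong\Omega_V$ by Proposition \ref{5.4}, and $\mathbb{R}\Gamma_{\{0\}}(\Omega_V)=\tilde{i}_*\tilde{i}^!\Omega_V\cong\delta[-d]$, so your formula would give $\Ext^d(D_X,\IC(X))\supseteq\Gamma(V,\delta)^G\neq 0$, whereas the theorem asserts $\Ext^{>0}(D_X,\IC(X))=0$. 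The correct continuation is $\Ext^m(D_V,\pi^!\IC(N))=\Gamma(V,H^m(\pi^!\IC(N)))$: what is supported at the origin for $m>0$ is not the input of $\RHom(D_V,-)$ but the cohomology modules $H^{m>0}(\pi^!\IC(N))$ themselves, because $\pi$ is \'etale over $U$.

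This misidentification also explains why your account of how $(-)^{G,\bot}$ arises remains verbal ("absorbs", "should yield"): your route cannot produce it. If you apply $\mathbb{R}\Gamma_{\{0\}}$ to the triangle $\pi^!\IC(N)\to j_*\pi^!N\to\pi^!K$, the middle term dies (since $\tilde{i}^!\tilde{j}_*=0$), and you are left with $\pi^!K[-1]$ — the \emph{full} link cohomology tensored with delta modules, with no orthogonal complement anywhere, and nonzero in degree $d$ even for $N=\Omega_U$. The paper's mechanism is a comparison your sketch never sets up: $\Ext^{>0}(D_V,\pi^!M)=R^{>0}\Gamma\bigl(\mathrm{cone}(H^0\pi^!M\to\pi^!M)\bigr)$, and since this cone is supported at the origin it equals $\mathrm{cone}\bigl(\tilde{i}_*\tilde{i}^!H^0\pi^!M\to\tilde{i}_*\tilde{i}^!\pi^!M\bigr)$. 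One then needs two inputs that are absent from your proposal: (a) the identification of $H^0\pi^!\IC(N)$ as $\IC(\pi^!N)$, or an extension of it by $\delta$, which yields $H^m\tilde{i}^!H^0\pi^!M\cong H^{-m}_{dR}(\tilde{U},\pi^!\mathbb{D}N)^*$ for $m>0$; and (b) the identity $\tilde{i}^!\pi^!M=\pi^!i^!M$ together with $H^{-m}_{dR}(U,\mathbb{D}N)^*\cong\bigl(H^{-m}_{dR}(\tilde{U},\pi^!\mathbb{D}N)^G\bigr)^*$. The cone then computes, in each degree, the kernel (shifted by one) of the restriction map from the full dual link cohomology to the dual of its invariants, and this kernel is by definition $H^{\bullet}_{dR}(\tilde{U},\pi^!\mathbb{D}N)^{G,\bot}$. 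It is exactly this subtraction of the $G$-invariant part, implemented by the map out of $\tilde{i}_*\tilde{i}^!H^0\pi^!M$, that your argument has no counterpart for.
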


\begin{proof}
Recall for any $D$-module $F$ on $V$, we have the exact triangle $$F\to \tilde{j}_*\tilde{j}^*F\to K\to,$$ where $K=\tilde{i}_*\tilde{i}^!F[1]$. Taking the long exact sequence, we have that $$\to\Ext^p(D_V,F)\to R^p\Gamma(\tilde{U},F)\to \Gamma(\delta)\otimes H^{p}(\tilde{i}^*F)\to.$$ Therefore we have an isomorphism $R^p\Gamma(\tilde{U},F)\cong \Gamma(\delta)\otimes H^{p}(\tilde{i}^*F)$ for $p>0$. 

For $M$ a $D$-module on $X$, we have the following diagram 

\begin{center}
\begin{tikzcd}
\pi^!M \arrow{r}              & \tilde{j}_*\tilde{j}^*\pi^!M \arrow{r}           & \tilde{i}_*\tilde{i}^!\pi^!M[1] \arrow{r}           & {} \\
H^0\pi^!M \arrow{r} \arrow{u} & \tilde{j}_*\tilde{j}^*\pi^!M \arrow{r} \isoarrow{u} & \tilde{i}_*\tilde{i}^!H^0\pi^!M[1] \arrow{r} \arrow{u} & {}.
\end{tikzcd}
\end{center}

As $\Ext^{>0}(D_V, \pi^!M)=R^{>0}\Gamma(\text{cone}(H^0\pi^!M\to \pi^!M))$, from the diagram it follows that $\Ext^{>0}(D_V, \pi^!M)=R^{>0}\Gamma(\text{cone}(\tilde{i}_*\tilde{i}^!H^0\pi^!M\to\tilde{i}_*\tilde{i}^!\pi^!M))$.

In the case when $M=\IC(N)$, where $N$ is a holonomic $D$-module on $U$, we know that $H^{m}i^!M\cong H^{-m}_{dR}(U,\mathbb{D}N)^*$ for $m\geq 0$ from the fact that $H^{-m}i^*M\cong H^{-m}_{dR} (U,N)$. Therefore, as $i^!\delta=\mathbb{C}$ and either $\IC(\pi^!N)=H^0\pi^!M$ or $H^0\pi^!M$ is an extension of $\IC(\pi^!N)$ by $\delta$, we see that $H^{m}\tilde{i}^!H^0\pi^!M\cong H^{-m}_{dR}(\tilde{U},\pi^!\mathbb{D}N)^*$ for $m> 0$. Using $H^{-m}_{dR}({U},\mathbb{D}N)^*=H^{-m}_{dR}{(\tilde{U},\pi^!\mathbb{D}N)^G}^*$, we see that for $m>0$,
\begin{align*}
    \Ext^{m}(D_X,\IC(N))\cong&\Gamma H^m\text{cone}(\tilde{i}_*\tilde{i}^!H^0\pi^!M\to\tilde{i}_*\tilde{i}^!\pi^!M)^G\\
    \cong&\Gamma H^m\text{cone}(\tilde{i}_*\tilde{i}^!\IC(\pi^!N)\to\tilde{i}_*\pi^!\tilde{i}^!\IC(N))^G\\
    \cong&(\Ker(H^{-m}_{dR}(\tilde{U},\pi^!\mathbb{D}N)^*\to H^{-m}_{dR}{(\tilde{U},\pi^!\mathbb{D}N)^G}^*)[1]\otimes\Gamma(V,\delta))^G\\
    \cong& (\Gamma(V,\delta)\otimes(H^{-m-1}_{dR}(\tilde{U},\pi^!\mathbb{D}N)^*/H^{-m-1}_{dR}({U},\mathbb{D}N)^*))^G\\
    \cong& (\Gamma(V,\delta)\otimes H^{-m-1}_{dR}(\tilde{U},\pi^!\mathbb{D}N)^{G,\bot})^G.
\end{align*}

Note that $H^{m}_{dR}(\tilde{U},\pi^!N)^*=H^{m}_{dR}({U},N)^*$ if and only if $G$ acts trivially on $H^{m}_{dR}(\tilde{U},\pi^!N)$. Therefore we see that for the trivial local system, $\Ext^{>0}(D_X,\IC(X))=0$. 

\end{proof}

\begin{corollary}
In the special case of $N=E$ for $E$ a vector bundle on $U$ with a flat algebraic connection, we see that 

$$\Ext^{m}(D_X,\IC(E))\cong (\Gamma(V,\delta)\otimes(H^{d-m-1}_{dR}(\tilde{U},\pi^!\mathbb{D}E)^{G,\bot}))^G$$ for $m>0$.
\end{corollary}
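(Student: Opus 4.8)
The plan is to deduce this corollary from Theorem \ref{5.9} by the standard shift in degrees that relates a holonomic $D$-module to a local system. First I would observe that Theorem \ref{5.9} is stated for an arbitrary holonomic $D$-module $N$ on $U$, whereas the corollary specializes to $N = E$, a vector bundle with flat connection. The only difference between the two statements is the cohomological degree: Theorem \ref{5.9} has $H^{-m-1}_{dR}(\tilde U, \pi^! \mathbb{D} N)^{G,\bot}$, while the corollary has $H^{d-m-1}_{dR}(\tilde U, \pi^! \mathbb{D} E)^{G,\bot}$. So the entire content is a degree shift by $d = \dim X$, and the key step is to explain where this shift comes from.

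The source of the shift is the convention relating a local system (equivalently, an $\mathcal{O}$-coherent $D$-module, as fixed in the Convention) to its underlying de Rham complex. When $N$ is an arbitrary holonomic $D$-module the de Rham cohomology $H^i_{dR}$ is taken with the convention used throughout the proof of Theorem \ref{5.9} (where $i^*$ and $i^!$ were computed via $H^{-m}i^* M \cong H^{-m}_{dR}(U,N)$ and $H^m i^! M \cong H^{-m}_{dR}(U, \mathbb{D}N)^*$). For a vector bundle with flat connection $E$ of rank placed in the perverse normalization, the de Rham complex is concentrated so that its cohomology is shifted by the dimension $d$ relative to the naive cohomology of the underlying local system. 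Concretely, I would substitute $N = E$ into the formula of Theorem \ref{5.9} and then re-index the de Rham cohomology using the identification $H^{-m-1}_{dR}(\tilde U, \pi^! \mathbb{D} E) = H^{d-m-1}_{dR}(\tilde U, \pi^! \mathbb{D} E)$ under the topological (unshifted) grading for the local system $E$. This is exactly the reindexing that converts the perverse-normalized de Rham cohomology into the topological cohomology of the flat bundle on the $(2d-1)$-dimensional (real) link, matching the earlier computations with $S^{2d-1}$.

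The main thing to be careful about, and what I expect to be the only real obstacle, is bookkeeping of the two grading conventions so that the shift by $d$ is applied consistently and with the correct sign. In particular I would check that $\pi^! \mathbb{D} E$ is again (up to the shift) a flat bundle on $\tilde U$ — which holds because $\pi$ is étale on the free locus, so $\pi^!$ restricted there is just pullback up to the shift $[\dim V - \dim V] = [0]$, and $\mathbb{D}$ preserves the class of local systems — and that the $G$-action and the $(-)^{G,\bot}$ operation are unaffected by the reindexing, since they are functorial in the cohomology and commute with the degree shift. Once the convention is pinned down, substituting $N=E$ and shifting degrees gives the stated formula directly, with no further computation required.

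Since the corollary follows from Theorem \ref{5.9} purely by specialization and this degree reindexing, I would expect the written proof to be a single short paragraph, essentially of the form: \emph{apply Theorem \ref{5.9} with $N = E$ and reindex the de Rham cohomology of the flat bundle $E$ according to the convention relating $\mathcal{O}$-coherent $D$-modules to topological local systems, which shifts the cohomological degree by $d = \dim X$.}
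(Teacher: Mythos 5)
Your proposal is correct and matches the paper's (implicit) argument: the paper gives no separate proof of this corollary, as it is exactly the specialization $N=E$ of Theorem \ref{5.9} combined with the re-indexing by $d$ that converts the $D$-module normalization of de Rham cohomology into the topological grading of the flat bundle on the link, which is the convention used in the discussion following the corollary (where $H^{<d}_{dR}(\tilde{U},\pi^!\mathbb{D}E)$ is compared with the cohomology of $S^{2d-1}$). Your bookkeeping of the shift, and the observation that $\pi^!$ is plain pullback on the free locus so $\pi^!\mathbb{D}E$ is again a flat bundle compatible with the $G$-action and the $(-)^{G,\bot}$ operation, is precisely what is needed.
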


Since $\tilde U$ is homotopic to $S^{2d-1}$, and $\pi^! E$ is a trivial rank-one local system on which $G$ acts by a character, we get that $H_{dR}^{<d}(\tilde U, \pi^! \mathbb{D}E)$ is concentrated in degree zero and gives there the inverse of the character by which $G$ acts on $L$. This shows that for the trivial local system, the higher Ext groups are always zero; for a non-trivial local system, the higher Ext groups are non-zero in the $d-1$ degree and equals to $\Gamma(V,\delta)_\chi$, where $\chi$ is the character by which $G$ is acting on $E$.

By Proposition \ref{propo}, we see that $\Ext^0(D_X,\IC(X))=(\Omega_V)^G$ for the trivial local system and $\Ext^0(D_X,\IC(L))=\Gamma(\tilde{U},\Omega_{\tilde{U}})_\chi$ (where $\chi$ is the defining representation for $L$) for a non-trivial simple topological local system $L$. 

Hence we see that the holonomic $D$-modules can be viewed as a $\Diff(V/G)$ module via our correspondence if at each singularity, either the local system has no monodromy, or it pulls back to a local system on $V$ which is nontrivial at the preimage of the singularity. This includes $\IC(X)$, but excludes $\IC(L)$ for non-trivial simple local systems.

This is the complete opposite to the case of curves in the Section \ref{curve}.

\begin{remark}\label{globalremark}
The above calculation can be globalised to the following. 
Suppose now $L$ is a local system on a locally closed subset $Z\subset X=V/G$ where $V$ now is a general variety such that $V/G$ has isolated singularities. 
Let $Z_x:=Z\cap B_x$, where $B_x$ is a small analytic ball around $x\in X$. 
Furthermore, let $\widetilde{Z_x}$ be a connected component of $\pi^{-1}(Z_x)$. 
Then the above calculation also shows that $$\ESxt^{m}(D_X,\IC(L))_x\cong H^{d-m-1}\Gamma(\widetilde{Z_x},\pi^!(\mathbb{D}L))^{G,\bot},$$ for $m>0$, which is 0 if and only if $H^{d-m-1}\Gamma(Z_x,\mathbb{D}L)\hookrightarrow H^{d-m-1}\Gamma(\widetilde{Z_x},\pi^!\mathbb{D}L)$ is surjective. 
If $L$ has regular singularity at $x$, this happens if and only if $L^{\pi_1(Z_x)}\cong L^{\pi_1(\widetilde{Z_x})}$, \textit{i.e.}, $G=\pi_1(Z_x)/\pi_1(\widetilde{Z_x})$ acts trivially on $L^{\pi_1(\widetilde{Z_x})}$. 

Therefore:
\begin{itemize}
    \item if $\dim(Z)\leq 1$, then $\Ext^{>0}(D_X,\IC(L))=0$;
    \item if $\dim(Z)\geq 2$, in general the higher Ext can be complicated. However, if the closure $\overline{\widetilde{Z_x}}$ is a local complete intersection, for each singular point $x$ and $X$ only consists regular singularities, then $\Ext^m(D_X,\IC(L))=0$ for $1\leq m \leq \dim(Z)-2$ as the link at an isolated complete intersection singularity is $(d-2)$-connected.
\end{itemize}

\end{remark}

\begin{remark}
The canonical $D$-module $M(X)$ of Etingof--Schedler \cite{Poisson_Traces_and_D-Modules_on_Poisson_Varieties} is a local enhancement of Poisson homology of $X$. When $X=V/G$ where $G$ is a finite subgroup of $Sp(V)$ and $X$ has isolated singularities, $M(X)$ is isomorphic to a direct sum of intermediate extensions of trivial local systems on each stratum, see \cite[Corollary 4.16]{Poisson_Traces_and_D-Modules_on_Poisson_Varieties}. We deduce that $M(X)$ can be seen an ordinary $D$-module over $\Diff(X)$. This includes the Du Val case.

\end{remark}

We can also do a similar calculation to deduce the structure of $\Ext^\bullet(D_X,D_X)$:

\begin{proposition}
The cohomology of the DG algebra $\REnd(D_X)$ is given by
$$\Ext^\bullet(D_X,D_X)\cong \Diff(V)^G\bigoplus (\langle(1-g)\cdot \Gamma(V,\delta)\rangle_{g\in G}\otimes \Gamma(V,\delta))^G[1-d].$$
\end{proposition}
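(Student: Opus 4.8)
The plan is to reduce the computation to local cohomology on the smooth space $V$ via the equivariant adjunction that already powered Theorem \ref{5.9}, and then to run the same $G$-equivariant bookkeeping that produced the superscript $G,\perp$ there. Using Lemma \ref{key} (so that $D_X = (\pi_* D_V)^G$), the adjunction $\pi_* \dashv \pi^!$, and exactness of $(-)^G$ in characteristic zero, I would first identify
$$\Ext^m(D_X, D_X) \cong \Ext^m_{D_V}(D_V, \pi^! D_X)^G = R^m\Gamma(V, \pi^! D_X)^G.$$
The identity of $D_X$ corresponds under this adjunction to a canonical $G$-equivariant map $D_V \to \pi^! D_X$ which is an isomorphism over the free locus $\tilde U$ (since $\pi|_{\tilde U}$ is an étale $G$-cover, $\pi^! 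D_X|_{\tilde U} = D_{\tilde U}$). Let $Q$ be its cone, supported at the preimage of the singular point. As $V$ is affine and $D_V$, $H^0\pi^! D_X$ are honest modules, $R^{>0}\Gamma(V, D_V) = 0$, so $R^m\Gamma(V, \pi^! D_X) = \Gamma(V, H^m Q)$ for $m\geq 1$; in degree $0$ the Remark following Lemma \ref{key} already gives $\Gamma(V,\pi^! D_X)^G = \Gamma(V,D_V)^G = \Diff(V)^G$, which is the asserted degree-zero summand $\End(D_X) = \Diff(X)$.

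Next I would compute $Q$ by comparing the two recollement (local-cohomology) triangles for $\tilde j:\tilde U \hookrightarrow V$: the cone $C'' = \tilde i_* \tilde i^! D_V[1]$ of $D_V \to \tilde j_* D_{\tilde U}$ and the cone $C' = \tilde i_*\tilde i^!\pi^! D_X[1]$ of $\pi^! D_X \to \tilde j_* D_{\tilde U}$. The octahedral axiom applied to $D_V \to \pi^! D_X \to \tilde j_* D_{\tilde U}$ yields a triangle $Q \to C'' \to C'$. The cone $C''$ is $R\underline\Gamma_{\{0\}}(D_V)[1]$; because $D_V$ is $\mathcal{O}_V$-free on $\{\partial^\alpha\}$ and $\dim V = d$, it is concentrated in cohomological degree $d-1$ with
$$\mathcal{H}^d_{\{0\}}(D_V) \cong \mathcal{H}^d_{\{0\}}(\mathcal{O}_V)\otimes_{\mathbb{C}}\mathbb{C}[\partial] \cong \Gamma(V,\delta)\otimes_{\mathbb{C}}\Gamma(V,\delta),$$
identifying $\mathcal{H}^d_{\{0\}}(\mathcal{O}_V)$ with the delta module $\delta$ (the same injectivity fact used in Proposition \ref{vanishingprop}). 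Here $G$ acts on the first (local-cohomology) factor through its action on $\mathcal{O}_V$ and on the second (coefficient) factor through its action on the $\partial_i$.

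The heart of the argument is to identify the map $C'' \to C'$ in degree $d-1$. I expect that $C'$, being the local correction at the singular point of $X = V/G$ of the descended module $D_X = (\pi_*D_V)^G$, is again concentrated in degree $d-1$, and that the induced map $\mathcal{H}^d_{\{0\}}(D_V)\to H^{d-1}C'$ is, on the first tensor factor, the canonical projection $\mathcal{H}^d_{\{0\}}(\mathcal{O}_V) \to \mathcal{H}^d_{\{0\}}(\mathcal{O}_V)^G$ onto $G$-invariants (the part descending to $X$), tensored with the identity on the coefficient factor. Its kernel is exactly $\langle(1-g)\cdot\Gamma(V,\delta)\rangle_{g\in G}\otimes\Gamma(V,\delta)$, so that $Q$ is concentrated in degree $d-1$ with $H^{d-1}Q \cong \langle(1-g)\cdot\Gamma(V,\delta)\rangle_{g\in G}\otimes\Gamma(V,\delta)$. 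Taking global sections and $G$-invariants then yields
$$\Ext^{d-1}(D_X,D_X) \cong \bigl(\langle(1-g)\cdot\Gamma(V,\delta)\rangle_{g\in G}\otimes\Gamma(V,\delta)\bigr)^G,$$
with all other higher Ext groups vanishing; combined with the degree-zero computation this is the claimed splitting, the shift $[1-d]$ recording that this summand sits in degree $d-1$.

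The main obstacle is precisely this last identification: computing $\tilde i^!\pi^! D_X = i_0^! D_X$ with its $G$-action and showing that the comparison with $\tilde i^! D_V$ is the invariants projection on the delta-module factor. This is the analogue, for the coefficient system $D_{\tilde U}$ in place of $\Omega_{\tilde U}$, of the $(-)^{G,\perp}$ bookkeeping in Theorem \ref{5.9}: there the non-invariant part of a de Rham group survived, here it is the non-invariant part of the local-cohomology (delta-module) factor. I would establish it either by base change along $V\times_X V \rightrightarrows V$, computing $\pi^!\pi_* D_V = p_{1*}p_2^! D_V$ and taking $G$-invariants, or by the cleaner route of first checking that $H^0\pi^! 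D_X = D_V$, a Hartogs-type extension valid because $G$ contains no pseudo-reflections and $d\geq 2$ exactly as in the proof of Proposition \ref{5.4}, and then reading off the degree-$(d-1)$ part of $C'$. Verifying that it is the invariant part of the \emph{first} factor (and not the coefficient factor) that is absorbed into $H^0\pi^! D_X$ is where the bulk of the care will be needed.
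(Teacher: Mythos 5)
Your reduction is the same one the paper uses: pass to $R\Gamma(V,\pi^!D_X)^G$ via Lemma \ref{key} and adjunction, observe that the interesting part is a cone supported at the origin, compare the two local-cohomology triangles over $\tilde j\colon\tilde U\hookrightarrow V$ (your octahedron is exactly the paper's commutative diagram of triangles), and identify $\tilde i_*\tilde i^!D_V$ as a shifted delta module with multiplicity $\Gamma(V,\delta)$. The genuine gap is precisely at the step you yourself flag as needing ``the bulk of the care'': you never compute $\tilde i^!\pi^!D_X$ nor the comparison map $C''\to C'$, you only state what you \emph{expect} it to be and sketch two possible routes. This step is not a verification that can be deferred -- it is the mathematical content of the proposition, and it is where the paper uses Lemma \ref{key} a second time, via the adjunction chain
$$i^!D_X\cong\RHom(\delta,(\pi_*D_V)^G)\cong\RHom(\delta,\pi_*D_V)^G\cong\RHom(\pi^*\delta,D_V)^G\cong(i^!D_V)^G\cong\Gamma(V,\delta)^G[-d],$$
under which the map $\tilde i^!D_V\to\tilde i^!\pi^!D_X$ becomes the averaging projection on the Kashiwara multiplicity space. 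Neither of your fallback routes closes the gap: knowing $H^0\pi^!D_X\cong D_V$ (your ``cleaner route'', which is the paper's first step) determines neither $\tilde i^!\pi^!D_X$ nor the map into it, and the base-change computation over $V\times_X V$ is asserted but not carried out.

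Worse, the identification you commit to is false, and your own worry about ``which factor'' was well founded -- you guessed the wrong one. You claim the map in degree $d-1$ is the invariants projection on the local-cohomology factor, $\mathcal{H}^d_{\{0\}}(\mathcal{O}_V)\to\mathcal{H}^d_{\{0\}}(\mathcal{O}_V)^G$, tensored with the identity on the coefficient factor $\mathbb{C}[\partial]$, so that the kernel would be $\langle(1-g)\mathcal{H}^d_{\{0\}}(\mathcal{O}_V)\rangle\otimes\mathbb{C}[\partial]$. But the kernel of a morphism of $D$-modules must be a $D$-submodule, and this subspace is not one: right multiplication by the coordinates does not preserve isotypic decompositions of the local-cohomology factor. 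Concretely, for $G=\{\pm 1\}$ acting on $V=\mathbb{A}^2$ (the $A_1$ singularity), the class of $x_1^{-2}x_2^{-1}$ lies in your subspace, while $x_1^{-2}x_2^{-1}\cdot x_1=x_1^{-1}x_2^{-1}$ is a nonzero class in the invariant part, hence outside it. So no $D$-module map of the shape you describe exists. The correct statement, which the adjunction chain above proves, is that the projection happens on the \emph{other} tensor factor, the Kashiwara multiplicity space $i^!D_V[d]$ (roughly your coefficient factor), with the delta-module direction untouched; the kernel is $\delta\otimes\langle(1-g)\Gamma(V,\delta)\rangle$. Your final formula nevertheless matches the paper's, because the two candidate kernels happen to be isomorphic as abstract $G$-representations -- but that coincidence is not proved by your argument, which both posits a map that cannot exist and supplies no proof of the one that does.
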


\begin{proof}
Note we have the diagram
\begin{center}
\begin{tikzcd}
\pi^!D_X \arrow{r}              & \tilde{j}_*\tilde{j}^*\pi^!D_X \arrow{r}           & \tilde{i}_*\tilde{i}^!\pi^!D_X[1] \arrow{r}           & {} \\
H^0\pi^!D_X \arrow{r} \arrow{u} & \tilde{j}_*\tilde{j}^*\pi^!D_X \arrow{r} \isoarrow{u} & \tilde{i}_*\tilde{i}^!H^0\pi^!D_X[1] \arrow{r} \arrow{u} & {}.
\end{tikzcd}
\end{center}

Also, we have $H^0\pi^!D_X\cong D_V$. Indeed, as $D_V\cong H^0j_*j^*H^0\pi^!D_X$, we have by adjunction, a map $H^0\pi^!D_X\to D_V$. This map is injective as the kernel is concentrated at the origin and $\Hom(\delta,H^0\pi^!D_X)=\Hom(\delta,D_X)=\Hom(\delta,D_V)^G=0$. As $\Ext^1(\delta, M)=0$ and $D_V$ is indecomposable, we see this map has to be surjective too.

Furthermore, \begin{align*}
    i^!D_X\cong&\RHom(\mathbb{C},i^!D_X)\\
    \cong&\RHom(\delta,(\pi_*D_V)^G)\\
    \cong&\RHom(\delta,\pi_*D_V)^G\\
    \cong&\RHom(\pi^*\delta,D_V)^G\\
    \cong&(i^!D_V)^G\\
    \cong&\Gamma(V,\delta)^G[-d].
\end{align*}

Therefore, 
\begin{align*}
    \Ext^m(D_X,D_X)&\cong\Gamma H^m\text{cone}(H^0\pi^!D_X\to \pi^!D_X)^G\\
    &\cong \Gamma H^m \text{cone}(\tilde{i}_*\tilde{i}^!H^0\pi^!D_X\to \tilde{i}_*\tilde{i}^!\pi^!D_X)^G\\
    &\cong \Gamma H^m \text{cone}(\tilde{i}_*\tilde{i}^!D_V\to \tilde{i}_*\pi^!{i}^!D_X)^G\\
    & \cong H^{m-d}(\Ker(\Gamma(V,\delta)\to \Gamma(V,\delta)^G)[1]\otimes\Gamma(V,\delta))^G\\
    &\cong H^{m-d+1}(\langle(1-g)\cdot \Gamma(V,\delta)\rangle_{g\in G}\otimes\Gamma(V,\delta))^G,
\end{align*}
for $m>0$. Adding $\Ext^0(D_X,D_X)\cong\Diff(X)$ yields the result.
\end{proof}

\begin{remark}
Note that $\pi^!D_X\not\cong D_V$ (but $H^0\pi^!D_X\cong D_V$). If it is, then $$\Ext^\bullet(\pi^!D_X,\pi^!D_X)^G\cong \Ext^\bullet(\pi_*D_V,D_X)^G\cong \Ext^\bullet(D_X,D_X)$$ which is not concentrated in degree zero, but $\REnd(D_V)^G$ is.
\end{remark}

\begin{remark}
The proposition implies that any Kleinian singularity $X$ has non-vanishing $\Ext^1(D_X,D_X)$, hence they are not cuspidal.
\end{remark}

\begin{remark}
For $M=\IC(L_\chi)$ where $L_\chi$ is a simple non-trivial topological local system, there is an action $$(\langle(1-g)\cdot \Gamma(V,\delta)\rangle_{g\in G}\otimes\Gamma(V,\delta))^G\times \Gamma(V, \mathcal{O}_{\tilde{U}})^G\to (\Gamma(V,\delta)\otimes H^{-d}_{dR}(\tilde{U},\pi^!\mathbb{D}L_\chi)^{G,\bot})^G.$$
Note we have $H^{-d}_{dR}(\tilde{U},\pi^!\mathbb{D}L_\chi)^{G,\bot}=\chi$. On the subrepresentation $\chi\subset\Gamma(\delta)^{G,\bot}$, this action should be induced by applying distributions to functions $\chi\otimes\mathcal{O}_{\tilde{U}}\to \chi$, and the action should be zero elsewhere. 
\end{remark}


\begin{remark}
In the case when $X$ is a hypersurface, the argument simplifies and no longer needs Lemma \ref{key}. This case already includes the Du Val singularities.

Indeed, when $X$ is a hypersurface we have $\Ext^i(D_X,M)=0$ for all $i\geq 2$. So we are only interested in $\Ext^1(D_X,M)$. 
For the IC extension of the trivial monodromy, the exact sequence $\IC(X)\to j_*\Omega_U\to K$ induces $$0\to\Ext^1(D_X,\IC(X))\to \Ext^1(D_X,j_*\Omega_U)\to \Ext^0(D_X,\delta)\to 0.$$ To show $\Ext^1(D_X,\IC(X))=0$, we only need to show $\Ext^1(D_X,j_*\Omega_U)\to \Hom(D_X,\delta)$ is an isomorphism. We already know it is surjective, it remains to show it is injective. As of before,  $$\Ext^1(D_X,j_*(\Omega_U))=\Ext^1_D(D_U,\Omega_U)=\Ext^1_\mathcal{O}(\Omega_U,\Omega_U)=H^1_{Coh}(U,\Omega_U).$$ Consider a \v{C}ech covering of $U$ by open affines $V\backslash \{x_i-\text{axis}\}/G$. We get a sequence $$\to \mathbb{C}[x_1^{\pm 1},x_2,\dots,x_n]^{G}\oplus\dots\oplus\mathbb{C}[x_1,\dots,x_{n-1},x_n^{\pm 1}]^{G} \to \mathbb{C}[x_1^{\pm 1},\dots,x_n^{\pm 1}]^{G}\to 0,$$ the cokernel then can be identified with $(\frac{1}{x_1\dots x_n}\mathbb{C}[x_1^{-1},\dots,x_n^{-1}])^{G}$, these are exactly the elements of $\Hom(D_X,\delta)$. The map $\Ext^1(D_X,j_*\Omega_U)\to \Hom(D_X,\delta)$ is an $\End(D_X)\Modu$ map. Inside of $\End(D_X)\cong \Diff(X)$, there is a commutative subring generated purely by the partials (these two modules are actually rank 1 over this). A surjective endomorphism of finitely generated modules over a commutative ring is automatically an isomorphism.

For a non-trivial simple monodromy $L_\chi$, we have $$0\to\Ext^1(D_X,\IC(L_\chi))\to H^1_{Coh}(U,L_\chi)\to H^0_{dR}(U,L_\chi)\to 0.$$ But $H^0_{dR}(U,L_\chi)=0$, we have that $\Ext^1(D_X,\IC(L_\chi))\cong H^1_{Coh}(U,L_\chi)$ (where $L_\chi$ is the defining representation). Using the same covering but applied to $L_\chi$ similarly shows that $\Ext^1(D_X,\IC(L_\chi))=(\frac{1}{x_1\dots x_n}\mathbb{C}[x_1^{-1},\dots,x_n^{-1}])^\chi$.
\end{remark}

\begin{remark}
It might be possible to unify Section \ref{curve} and Section \ref{quotient}. In some sense, both sections are dealing with computing a cone of the form $H^0\pi^!M\to \pi^!M$ on $C$ for a finite map $\pi:C\to D$, then reducing to $D$. In Section \ref{curve}, the map was the normalisation map; In this section, the map is a quotient by a finite group $G$ and the reduction process is taking $G$-invariance. 
\end{remark}

\bibliographystyle{alpha}
{\footnotesize
\bibliography{ms}}
\end{document}